\newtheorem{thm}{Theorem}[section]
\newtheorem{lem}[thm]{Lemma}
\newtheorem{cor}[thm]{Corollary}
\newtheorem{prob}[thm]{Problem}
\theoremstyle{definition}
\newtheorem{df}[thm]{Definition}
\newtheorem{obs}[thm]{Observation}
\DeclareMathOperator\dom{dom}
\DeclareMathOperator\con{Con}
\DeclareMathOperator\im{im}
\DeclareMathOperator\rank{rank}
\DeclareMathOperator\End{End}
\DeclareMathOperator\Aut{Aut}
\DeclareMathOperator\gl{GL}
\DeclareMathOperator\id{id}
\def\th{\theta}
\def\id{\operatorname{id}}
\newcommand{\PT}{\mathcal{PT}}
\newcommand{\In}{\mathcal{I}}
\newcommand{\T}{\mathcal{T}}
\newcommand{\alt}{\mathcal{A}}
\newcommand{\J}{\mathcal{J}}
\newcommand{\Rg}{\mathcal{R}}
\newcommand{\Lg}{\mathcal{L}}
\newcommand{\Hg}{\mathcal{H}}
\newcommand{\Dg}{\mathcal{D}}
\newcommand{\mP}{\mathcal{P}}
\begin{document}
\title[Congruences on Products of Monoids]{Congruences on Direct Products of Transformation and Matrix Monoids}
\author{Jo\~ao Ara\'ujo}
\author{Wolfram Bentz}
\author{Gracinda M.S. Gomes}
\address[Ara\'{u}jo]{{Universidade Aberta, R. Escola Polit\'{e}cnica, 147}\\
  {1269-001 Lisboa, Portugal \& CEMAT-CI\^{E}NCIAS, 
Departamento de Matem\'atica, Faculdade de Ci\^{e}ncias, Universidade de Lisboa,
 1749-016, Lisboa, Portugal},
  {jjaraujo@fc.ul.pt}}
\address[Bentz] {Department of Physics and Mathematics, University of Hull, Hull,
HU6 7RX, UK,
%{{Centro de \'{A}lgebra / CEMAT, Universidade de Lisboa}\\
  %{1649-003 Lisboa, Portugal}\\
  {W.Bentz@hull.ac.uk}}
\address[Gomes]{{CEMAT-CI\^{E}NCIAS, 
Departamento de Matem\'atica, Faculdade de Ci\^{e}ncias, Universidade de Lisboa,
 1749-016, Lisboa, Portugal},
  {gmcunha@fc.ul.pt}}

\subjclass[2010]{
}
\keywords{Monoid, congruences, Green relations}
\maketitle

\begin{abstract}
Malcev described the congruences of the monoid $\T_n$ of all full transformations on a finite set $X_n=\{1, \dots,n\}$. Since then, congruences have been
characterized in various other monoids of (partial) transformations on $X_n$, such as the symmetric inverse monoid $\In_n$ of all injective partial transformations, or the monoid $\PT_n$ of all
partial transformations.
%Some decades later, Schein and Teclezghi used Malcev's result to describe the endomorphisms of the full transformation monoid.

The first  aim of this paper is to describe the congruences of the direct products $Q_m\times P_n$, where $Q$ and $P$ belong to  $\{\T, \PT,\In\}$. 

Malcev also provided a similar description of the congruences on the multiplicative monoid $F_n$ of all $n\times n$ matrices with entries in a field $F$; our second aim is provide a description of the principal congruences of $F_m \times F_n$.

The paper finishes with some comments on the congruences of products of more than two transformation semigroups, and a fairly large number of open problems. 
\end{abstract}

%\section{Wolfram's write-up of the T-Shaped Case}

%\subsection{Definition and Initial Reduction}
%

%\section{The principal congruences of $\mathcal{P}_m \times \mathcal{P}_n$, $\mathcal{PT}_m \times \mathcal{PT}_n$, and $\mathcal{IS}_m \times \mathcal{IS}_m$}
%From the above we are essentially fine if we can determine the congruences of $T_0$, which is
%isomorphic to $P=PT_m \times PT_m$ (where we might ignore those congruences whose second projection corresponds to the ``$K_4$-congruence"). We will from now on only work with $P$.
\section{Introduction}\label{sectintro}

Let $\PT_n$ denote  the monoid of all partial transformations on the set $X_n=\{1,\ldots,n\}$. Let $\T_n$ be full transformation monoid, that is, the semigroup of  all transformations in $\PT_n$  with domain $X_n$; and let $\In_n$ be the symmetric inverse monoid, that is, the semigroup of all $1-1$ maps contained in $\PT_n$. The congruences of these semigroups have been described in the past: Malcev \cite{Malcev} for  $\T_n$, Sutov \cite{Sutov} for  $\PT_n$ and Liber \cite{Liber} for $\In_n$. 

 In this paper we provide a description of the  principal congruences of $Q_n\times Q_m$ (Theorem \ref{th:2h}), where $Q\in\{\PT,\T,\In\}$, and then use this result to provide the full description of all congruences of these semigroups (Theorem \ref{mainQ}).  

Similarly, for a field $F$, denote by $F_n$ the monoid of all $n\times n$  matrices with entries in $F$. The congruences of $F_n$ have been described by Malcev \cite{Malcev2} (see also \cite{MatWeb}). Here we provide a description of the principal congruences of $F_n\times F_m$ (Theorems \ref{th:0hmat}, \ref{th:1hmat}, \ref{lm:equalF2}, and \ref{lm:HHF}). 

It is worth pointing out that the descriptions of the congruences of the semigroups 
\[
S:=\prod_{i\in M} Q_i  \mbox{ and }  T:=\prod_{i\in M} F_i,
\] where $F$ is a field, $M$ is a finite multiset of natural numbers, and $Q\in\{\PT,\T,\In\}$, are in fact yielded by the results of this paper, modulo the use of heavy notation and very long, but not very informative, statements of theorems.  (For more details we refer the reader to Section \ref{triples}.)

It is well known that the description of the congruence classes of a semigroup, contrary to what happens in a group or in a ring,
% is far from having a canonical presentation 
poses special problems and usually requires very delicate considerations (see \cite[Section 5.3]{Ho95}).  Therefore it is no wonder that the study of congruences  is among the topics attracting more attention when researching semigroups, something well illustrated by the fact that the few  years of this  century already witnessed the publication of more than $250$ research papers  on the topic. 

%congruences in special classes of semigroups. 

%In \cite{Malcev}, Malcev describes the congruences on  $\T_n$, and his approach
%has been extended to other natural classes of semigroups, namely to $\In_n$ by Liber \cite{Liber} and to $\PT_n$ by Sutov \cite{Sutov}. A complete presentation of these results may be found in \cite{Marzo}. Malcev \cite{Malcev2} also described the congruences on the multiplicative monoid  $F_n$ of all matrices of order $n$ over a field $F$. A slightly different 
%characterization is presented in \cite{MatWeb}, and this is the one we will be using later. 
%
%The knowledge of the congruences may then be used in many instances, for example Schein and Teclezghi \cite{ST} used Mal\'cev's characterization to describe the endomorphisms of $\T_n$.

Given the ubiquitous nature of the direct product construction, it comes quite as a surprise to realize that almost nothing is known about congruences on direct products of semigroups, even when the congruences on each factor of the product are known.  Here we start that study trusting that this will be the first contribution in a long sequence of papers describing the congruences of direct products of transformation semigroups. 
Before closing this introduction it is also worth to add that we have been led to this problem, not by the inner appeal of a natural idea (describing the congruences of direct products of very important classes of semigroups whose congruences were already known), but by considerations on the centralizer in $\T_n$ of idempotent transformations.  More about this will be said on the problems section at the end of the paper.

%A natural question is to ask for a ``nice" description of the congruences on a direct product $Q_m \times Q_n$ where $Q \in \{\T, \In, \PT\}$. We could not find an answer to our problem in the literature  though. We will show
%that, as in the single case $Q_i$, congruences are tightly connected to the $\Dg$-class structure of the monoid.

In order to outline the structure of the paper, we now introduce some notation. Let $S$ be a finite  monoid. We say that $a,b\in S$ are $\Hg$-related if 
\[
aS=bS \mbox{ and } Sa=Sb.
\]
 The elements $a,b\in S$ are said to be $\Dg$-related if $SaS=SbS$. Let $F$ be a field. By $F_n$ we denote the monoid of all $n\times n$ matrices with entries in $F$.  
  
In Section \ref{s:pre}, we recall the description of the congruences on $Q_i$, which we use in Section \ref{s:principal} to fully describe the principal congruences on $Q_m \times Q_n$, for $Q\in\{\PT,\T,\In\}$. 
In Section \ref{s:allcong},
we show that a congruence $\th$ on $Q_m \times Q_n$ is determined by those unions of its classes that are also unions of $\Dg$-classes, which we will call $\th$-dlocks.
After presenting the possible types of
$\th$-dlocks, whose properties are related to their $\Hg$-classes, we shall describe $\th$ within each such block, making use of the results obtained in Section \ref{s:principal}. In Section \ref{triples} we give an idea of how the congruences look like on a semigroup of the form $Q_m \times Q_n\times Q_r$. 
 Section \ref{s:matrix} is devoted to the description of the congruences on $F_n$ following \cite{MatWeb}, and we dedicate Section \ref{s:principalF} to characterizing the principal congruences on $F_m \times F_n$. We do so following the pattern of Section \ref{s:principal}, doing the necessary adaptations. Describing the general congruences of $F_m \times F_n$ is notationally heavy but we trust the reader will be convinced that to do so nothing but straightforward adaptions of Section~\ref{s:allcong} are needed. The paper finishes with a set of problems. 

%Following these results, the next natural step consists of describing the endomorphisms of   $Q_m \times Q_n$, but that will be subject of a subsequent paper.
\section{Preliminaries}\label{s:pre}

Let $\In_n$, $\T_n$ and $\PT_n$ be, respectively, the inverse symmetric monoid, the full transformation monoid and the partial transformation monoid, on the set $X_n:=\{1,\ldots ,n\}$. Let $S_n$ denote the symmetric group on $X_n$.
Let $Q$ stand for one of $\mathcal{I}$, $\mathcal{T}$, $\mathcal{PT}$.

%We will determine the principal congruences of $Q_m \times Q_n$. Our goal is to describe the congruences of $Q_m\times Q_n$.

For clarity, we start by recalling some well known facts on the Green relations as well as the description of the congruences on an arbitrary $Q_m$. The lattice of congruences of a semigroup $S$ will be denoted by $\con(S)$.
For further details see \cite{Marzo}.

Given $f\in Q_n$ we denote its domain by $\dom (f)$, its image by  $\im (f)$, its kernel by $\ker(f)$ and its rank (the size of the image of $f$) by $|f|$.

\begin{lem}
Let $f,g\in Q_n$. Then
\begin{enumerate}
\item $ f \mathcal{D} g$ iff $|f|= |g|$;
\item $f \mathcal{L} g$ iff $f$ and $g$ have the same image;
\item $f \mathcal{R} g$ iff $f$ and $g$ have the same domain and kernel;
\item $f \mathcal{H} g$ iff $f$ and $g$ have the same domain, kernel, and image.
\end{enumerate}
\end{lem}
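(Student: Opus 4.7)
The plan is to establish items (2) and (3) directly from the definitions of $\mathcal{L}$ and $\mathcal{R}$ via principal one-sided ideals, deduce (4) from $\mathcal{H} = \mathcal{L} \cap \mathcal{R}$, and obtain (1) last, using the standard fact $\mathcal{D} = \mathcal{L} \circ \mathcal{R}$ by constructing a bridge element $h$ that is $\mathcal{L}$-related to one of $f, g$ and $\mathcal{R}$-related to the other.

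For (2), the forward direction follows because any factorisation $f = hg$ (with the composition convention fixed by $Sa = Sb$ giving the $\mathcal{L}$-relation) forces $\im(f) \subseteq \im(g)$, and symmetrically; equality of the principal left ideals $Q_n^1 f = Q_n^1 g$ thus forces $\im(f) = \im(g)$. For the converse, assuming $\im(f) = \im(g)$, I would build partial maps $h_1, h_2 \in Q_n$ that act as suitable bijections on the common image $\im(f) = \im(g)$ (and trivially elsewhere) so as to realise $f = h_1 g$ and $g = h_2 f$; for $Q = \In$, these witnesses must be chosen injective, which is possible because the kernels of $f$ and $g$ are already trivial there.

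For (3), domain and kernel become the relevant invariants: a factorisation $g = fh$ forces $\dom(g) \subseteq \dom(f)$ (equality in $\T_n$) and $\ker(f) \subseteq \ker(g)$ restricted to the common domain, so equality of principal right ideals $fQ_n^1 = gQ_n^1$ forces $\dom(f) = \dom(g)$ and $\ker(f) = \ker(g)$. For the converse, $f$ and $g$ both factor through the quotient $\dom(f)/\ker(f) = \dom(g)/\ker(g)$ as bijections onto their images, and one composes the inverse of one of these bijections with the other to obtain an $h \in Q_n$ with $fh = g$ (and symmetrically); again, in $\In_n$, the resulting $h$ is automatically injective since both bijections are, so the construction stays in $\In_n$. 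Item (4) is then immediate from $\mathcal{H} = \mathcal{L} \cap \mathcal{R}$.

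Finally, for (1), the forward implication: if $|f| = |g|$, construct $h \in Q_n$ with $\dom(h) = \dom(g)$, $\ker(h) = \ker(g)$, and $\im(h) = \im(f)$ — this is possible precisely because the rank $|g|$ equals $|\im(f)|$, so the bijection $\dom(g)/\ker(g) \to \im(f)$ can be chosen arbitrarily, and in $\In_n$ it is automatically injective. By (2) and (3), $h\,\mathcal{L}\,f$ and $h\,\mathcal{R}\,g$, whence $f\,\mathcal{D}\,g$. Conversely, $\mathcal{L}$- and $\mathcal{R}$-related elements share their ranks (by (2) and (3) respectively), so $|f| = |g|$ follows from any chain $f\,\mathcal{L}\,h\,\mathcal{R}\,g$. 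The only real obstacle I expect is keeping the three cases $\In_n$, $\T_n$, $\PT_n$ uniformly in view when producing the auxiliary maps, since the totality and injectivity constraints differ; this is handled by specifying each witness by its action on the relevant domain/image and extending trivially (resp.\ not at all) elsewhere, and verifying injectivity case-by-case.
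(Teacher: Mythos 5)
The paper does not prove this lemma at all: it is stated in Section~2 as a recollection of well-known facts with a pointer to the Ganyushkin--Mazorchuk monograph, so there is no in-paper argument to compare against. Your proof is the standard one and is essentially sound: characterise $\mathcal{L}$ and $\mathcal{R}$ via the one-sided principal ideals, get $\mathcal{H}$ as the intersection, and get (1) from $\mathcal{D}=\mathcal{L}\circ\mathcal{R}$ by building a bridge element with the domain and kernel of $g$ and the image of $f$; the rank hypothesis $|f|=|g|$ is exactly what makes that bridge exist, and the injectivity/totality checks for $\In_n$ and $\T_n$ go through as you say. (The paper itself, incidentally, only ever uses item (1), and it re-derives it later via $\mathcal{J}$ and the ideal structure, $f\in SgS \Leftrightarrow |f|\le|g|$, rather than via $\mathcal{L}\circ\mathcal{R}$; both routes are fine in a finite semigroup.)

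One detail in your sketch of the converse of (2) is misdescribed. Under the paper's convention (arguments on the left, composition left to right), $f=h_1g$ means $h_1$ is applied \emph{first}, so the witness $h_1$ is not a bijection acting on the common image $\im f=\im g$: it is a map from $\dom(f)$ into $\dom(g)$ choosing, for each $x\in\dom(f)$, a $g$-preimage of $(x)f$. A ``bijection on the image'' is the right shape of witness for the $\mathcal{R}$-converse in item (3) (where $g=fh$ and $h$ acts on $\im f$), and indeed that is what you correctly describe there. The correct witness for (2) exists just as easily (and is injective when $f$ and $g$ are, so the $\In_n$ case is fine), so this is a slip of description rather than a gap in the argument, but as written the construction for (2) would not typecheck.
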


Let $S$ be a monoid. A set $I\subseteq S$ is said to be an ideal of $S$ if $SIS\subseteq I$ and  an ideal $I$ is said to be principal if there exists an element $a\in S$ such that $I=SaS$.  It is well known that all ideals in $Q_n$ are principal; in fact, given any ideal $I\leq Q_n$, then $I=Q_naQ_n$, for all transformation $a\in I$ of maximum rank.  In addition we have
\[
Q_naQ_n=\{b\in Q_n\mid |b|\le |a|\}.
\]

The Green relation $\J$ is  defined on a monoid $S$ as follows: for $a,b\in S$,
\[
a\J b \mbox{ iff } SaS=SbS.
\] Thus two elements are $\J$-related if and only if they generate the same principal ideal. In a finite semigroup we have $\J={\mathcal D}$ (and this explains the definition of $\Dg$ used in Section \ref{sectintro}). It is easy to see that if $|f|<|g|$, then $SfS\subseteq SgS$ and in particular $f\in SgS$. That $f\in SgS$ implies $|f|\le |g|$ is also obvious. Therefore $ f \J g $ iff $|f|=|g|$.

If $f \Hg g$ then $f$ and $g$ have the same image, so we may speak about the set image $\im H $ of an $\Hg$-class $H$. Given an  $\Hg$-class $H$ of $Q_n$, we can fix an arbitrary linear order on
$\im H$, say that
$\im f=\{a_1 <\dots < a_{|f|}\}$ for all $f \in H$. We define a right action $\cdot$ of the group $S_{i}$ with $i=|f|$ on all elements in $Q_n$ of rank $i$: let $\omega \in S_i$ and $x \in \dom(f)$, then
$(x) f\cdot \omega = a_{j \omega}$ where $x f=a_j$ with regard to the fixed ordering associated with the $\Hg$-class of $f$. Note that the action $\cdot$ preserves $\Hg$-classes.
Hence for each $w \in S_i$ and $\Hg$-class $H$ with $\im H=i$, we may define $\bar \omega^H\in S_{\im H}$ by  $f \cdot \omega= f \bar{\omega}^H$, for all $f \in H$.
% When considering the action  $\cdot$ on any individual $\Hg$-class $H$ with $|\im H|=i$, we may associate with $\omega \in S_i$ its induced action on $\im H$, say $\bar{\omega}^H \in S_{\im H}$. We get
%$f \cdot \omega= f\bar{\omega}^H$, for all $f \in H$.

The description of the congruences of $Q_n$ can be found in \cite[sec. 6.3.15]{Marzo}, and goes as follows.
\begin{thm}\label{th:congrQn}
A non-universal congruence   of $Q_n$ is associated with a pair $(k,N)$, where $1\le k\le n$, and $N$ is a normal subgroup of $S_k$; and it is of the form $\th(k,N)$  defined as follows: for all $f,g\in Q_n$,
\[
f\theta(k,N) g \mbox{ \ iff \ }
\left\{
\begin{array}{l}
f=g \mbox{ and } |f|>k, \mbox{ or}\\
|f|,|g|<k, \mbox{ or}\\
|f|=|g|=k, f{\mathcal H}g \mbox{ and }  f= g\cdot \omega, \mbox{ where $\omega\in N$.}
\end{array}
\right.
\]
\end{thm}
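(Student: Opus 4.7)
The plan is to prove the theorem in two directions: first that each $\theta(k,N)$ is indeed a congruence, and second that every non-universal congruence arises this way. Since the theorem unifies the classical results of Malcev, Sutov and Liber, the argument must work uniformly for $Q\in\{\mathcal{I},\mathcal{T},\mathcal{PT}\}$, exploiting only Green-class structure and the right action of $S_k$ on rank-$k$ elements.

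For the first direction, I would verify reflexivity, symmetry, transitivity, and compatibility with multiplication. Reflexivity and symmetry are immediate (with $\omega=1_N$ and $\omega^{-1}\in N$ respectively). Transitivity at rank $k$ follows because $f=g\cdot\omega_1$ and $g=h\cdot\omega_2$ with $\omega_i\in N$ give $f=h\cdot(\omega_2\omega_1)$, using that $N$ is closed under products. For compatibility, let $h\in Q_n$ and suppose $f\,\theta(k,N)\,g$. The cases $f=g$ and $|f|,|g|<k$ are immediate because left/right multiplication cannot increase rank. The delicate case is $|f|=|g|=k$ with $f\mathcal{H}g$ and $f=g\cdot\omega$: here left-multiplication by $h$ either drops both ranks below $k$ (reducing to the previous case) or preserves rank, in which case $hf$ and $hg$ still share domain, kernel, and image, and the action relating them is still given by $\omega$ on the common image; right-multiplication similarly either drops the rank or preserves it, and normality of $N$ is used precisely when the ordering on $\im(f)$ is transported to a different $\mathcal{H}$-class, forcing $\omega$ to be replaced by a conjugate in $S_k$, which must still lie in $N$.

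For the second direction, let $\rho$ be a non-universal congruence. I would set
\[
k := \min\{\,r : \text{there exist distinct } f,g \text{ with }|f|=|g|=r \text{ and } f\,\rho\,g\,\},
\]
(and handle the trivial case $\rho=\Delta$ separately, viewing it as $\theta(0,\{e\})$ or excluding it from the statement). The core lemmas are: (i) if $f\,\rho\,g$ with $|f|<|g|$, or with $|f|=|g|$ but $f,g$ not $\mathcal{H}$-related, then by pre- and post-composing with well-chosen idempotents and partial identities one can produce pairs of arbitrary smaller-rank elements related by $\rho$; consequently every pair of elements of rank $<|g|$ becomes $\rho$-related. This forces all elements of rank $<k$ into one $\rho$-class and forbids any rank-$k$ identifications between distinct $\mathcal{H}$-classes. (ii) Within a fixed rank-$k$ group $\mathcal{H}$-class $H\cong S_k$, the set $N_H := \{\omega \in S_k : e \,\rho\, e\cdot\omega\}$ (where $e$ is the group identity of $H$) is a subgroup of $S_k$. (iii) The subgroup $N_H$ is independent of $H$: given another rank-$k$ $\mathcal{H}$-class $H'$, one exhibits elements of $Q_n$ mapping $H$ to $H'$ via multiplication so that the $\rho$-compatibility transports $N_H$ to $N_{H'}$. (iv) $N:=N_H$ is normal in $S_k$: for any $\sigma\in S_k$, conjugation $\sigma^{-1}\omega\sigma$ is realized by multiplying on both sides by elements of $Q_n$ that implement the permutation $\sigma$, and then applying compatibility.

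The main obstacle will be part (iii)–(iv): proving the uniformity of $N_H$ across $\mathcal{H}$-classes and its normality. Both hinge on constructing concrete transformations in $Q_n$ that move a given rank-$k$ $\mathcal{H}$-class to another (or conjugate its action) in a way controlled by a prescribed permutation in $S_k$. This is where the difference between $\mathcal{I}, \mathcal{T}, \mathcal{PT}$ could in principle matter, but in each case the supply of idempotents and partial permutations of rank $k$ is rich enough that such transformations are readily available. Once this machinery is in place, one concludes $\rho=\theta(k,N)$ by comparing the two congruences on each $\mathcal{D}$-class, using lemma (i) to handle ranks below $k$, the definition of $k$ to handle ranks above $k$, and the coincidence $\rho|_H = \theta(k,N)|_H$ established by (ii)–(iv) at rank $k$.
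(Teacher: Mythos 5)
The paper does not actually prove this statement: it is quoted from the literature (\cite[Sec.~6.3.15]{Marzo}, going back to Malcev, Sutov and Liber), so there is no in-paper argument to compare against. Your overall architecture --- verify directly that each $\th(k,N)$ is a congruence, then classify an arbitrary non-universal $\rho$ by locating a collapsed ideal, extracting a subgroup of $S_k$ from a group $\Hg$-class, and proving uniformity across $\Hg$-classes and normality by realizing conjugation through multiplication --- is the standard and correct route, and your identification of where normality of $N$ enters (absorbing the conjugate produced when right multiplication transports the image ordering to a new $\Hg$-class) is exactly right.

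There is, however, a concrete gap in the classification direction: the parameter $k:=\min\{r:\exists f\ne g,\ |f|=|g|=r,\ f\,\rho\,g\}$ is not the $k$ of the theorem. For $\th(k_0,N)$ with $k_0\ge 2$ the whole ideal $I_{k_0-1}$ is a single class, so distinct $\rho$-related elements of rank $1$ already exist and your minimum evaluates to $1$ regardless of $k_0$. The two assertions you then derive --- that all elements of rank $<k$ lie in one class, and that rank-$k$ identifications never cross $\Hg$-classes --- fail for this $k$: take $\rho=\th_{I_j}=\th(j+1,\varepsilon_{j+1})$ on $\PT_n$ with $1\le j\le n-1$; your $k$ equals $1$, yet rank-$1$ maps with different images (hence in different $\Hg$-classes) are identified, and the theorem's parameter is $j+1$. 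The parameter must instead be read off from the top of the collapsed ideal: $k-1$ should be the largest rank of an element $\rho$-related to something of strictly smaller rank or to a non-$\Hg$-related element of equal rank (with the degenerate value when no such element exists). Relatedly, your lemma (i) should conclude that the entire ideal $I_{\max\{|f|,|g|\}}$ --- ranks $\le\max$, not $<|g|$ --- forms a single class; that stronger form is what lets the upward propagation close and what pins down $k$. With $k$ redefined in this way, steps (ii)--(iv) and the forward verification go through as you outline them.
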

We write $\th=\th(k,N)$ or just $\th$ if there is no ambiguity.
It follows from the normality of $N$ that the  definition of $\th(k,N)$ is independent of the ordering associated with each of the $\Hg$-classes of $Q_n$. A similar independence result will hold for a corresponding
construction in our main result.

 The following will be applied later without further reference. Let $g,g' \in Q_n$. It follows from Theorem \ref{th:congrQn}, that if $(g,g') \in \Hg$, then the principal congruence $\th$ generated by $(g,g')$ is $\th(|g|,N)$, where $N$ is the normal subgroup of $S_{|g|}$ generated by
$\sigma \in S_{|g|}$ with $g'=g\cdot \sigma$, with respect to a fixed ordering of the image of $g$. If $(g,g') \notin \Hg$ then $\th$ is $\th(k+1,\{\id_{S_{k+1}}\})$, where $k=\max\{|g|,|g'|\}$, which is the Rees
congruence defined by the ideal $I_k$ of all transformations of rank less or equal to $k$, i.e. $\th_{I_k}$.

From this description we see that if $\th=\theta(k,N)\in \con (Q_n)$ and there exist $f,g\in Q_n$, with $|f|<|g|$ and  $(f,g)\in \theta$, then $|g|<k$ and the ideal generated by $g$ is contained in a single $\theta$-class.

%The above easy observations will be used in the sequel without further mention.

For each $n>1$ the congruences on each semigroup $Q_n$ form a chain \cite[sec.~6.5.1]{Marzo}. Let $\iota_S$ and $\omega_S$ be, respectively, the trivial and the universal congruences on $S$. For $k\in \{1,\ldots,n\}$,  denote by $\equiv_{\varepsilon_k}$, $\equiv_{\alt_k}$ and $\equiv_{S_k}$, the congruence associated to $k$ and to the trivial, the alternating and  the symmetric subgroup of $S _k$,  respectively. Finally, for $k=4$, let $\equiv_{V_4}$ be the congruence associated with the Klein $4$-group.
We have
\[
\iota_S=\equiv_{S _1}\subset \equiv_{\varepsilon_2}\subset \equiv_{S _2}\subset \equiv_{\varepsilon_3}\subset  \equiv_{\alt_3}\subset  \equiv_{S _3}\]
\[ \subset \equiv_{\varepsilon_4}
\subset \equiv_{V_4}\subset \equiv_{\alt_4} \subset  \equiv_{S _4} \subset \ldots \subset \equiv_{\varepsilon_n}\subset  \equiv_{\alt_n}\subset  \equiv_{S _n}\subset \omega_S.
\]
Let $0*$
stand for $1$ if $Q= \mathcal{T}$ and for $0$ in the other cases.
For $0* \le i \le m$, let $I_i^{(m)}$ stand for the ideal of $Q_m$  consisting of all functions
$f$ with $|f| \le i$. We will usually just write $I_i$ if $m$ is deducible from the context. Let $\theta_{I_i}$ stand for the Rees congruence on $Q_m$ defined by $I_i$.

%Two rank $i$ maps $f,g\in Q_n$ are of the form
%\[
%\begin{array}{cc}
%f:=\left(	
%\begin{array}{ccccc}
%A_1&A_2&\ldots&A_i&A_{i+1}\\
%a_1&a_2&\ldots&a_i&\emptyset
%\end{array}
%\right)
%&
%g:=\left(	
%\begin{array}{ccccc}
%A_1&A_2&\ldots&A_i&A_{i+1}\\
%a_{1\sigma}&a_{2\sigma}&\ldots&a_{i\sigma}&\emptyset
%\end{array}
%\right),
%\end{array}
%\] where $\sigma\in S _i$ and $A_j=a_jf^{-1}$ (for $j\in \{1,\ldots ,i\}$). Usually the set $A_{i+1}$ is omitted. When $f$ and $g$ are as above we will use the shorthand $f=g\cdot \sigma$.

\section{Principal congruences on $Q_m\times Q_n$} \label{s:principal}

The aim of this section is to describe the principal congruences of $Q_m\times Q_n$, when $Q\in \{\In,\T,\PT\}$. We will start by transferring our notations to the setting of this product semigroup.

For functions $f \in Q_m \cup Q_n$, let $|f|$ once again stand for the size of the image of $f$, and for $(f,g) \in Q_m\times Q_n$ let $|(f,g)| = (|f|,|g|)$, where we order these pairs according to the partial order  $\le \times \le$.
Throughout, $\pi_1$ and $\pi_2$ denote the projections to the first and second factor.

We will start with some general lemmas about congruences on $Q_m\times Q_n$. In case there is no danger of ambiguity, we will use the shorthand $P$ for $Q_m\times Q_n$, to 
simplify the writing. 

\begin{lem} \label{lem:single} Let  $\theta$ be a congruence of  $Q_m\times Q_n$ and fix  $f\in Q_m$; let
$$\theta_f:=\{ (g,g')\in Q_n\times Q_n\mid (f,g)\theta (f,g')\}.$$

Then
\begin{enumerate}
\item $\theta_f$ is a congruence on $Q_n$;
\item if $f'\in Q_m$ and $|f'|\le |f|$, then $\theta_f\subseteq \theta_{f'}$;
\item if $|f|= |f'|$ in $Q_m$, then $\theta_f=\theta_{f'}$.
\end{enumerate}
\end{lem}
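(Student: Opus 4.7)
The plan is to verify the three claims in order, with (3) following immediately from (2) by symmetry. The only real ingredient beyond the definitions is the fact, recalled in the Preliminaries, that every ideal of $Q_m$ is principal and that $Q_m f Q_m = \{h \in Q_m : |h| \le |f|\}$. I expect no serious obstacle; the only thing to watch is to use the monoid identities properly when deducing closure under multiplication.

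For (1), reflexivity, symmetry, and transitivity of $\theta_f$ are inherited directly from $\theta$. For congruence compatibility, given $(g,g') \in \theta_f$ and $h \in Q_n$, I would multiply the pair $(f,g)\,\theta\,(f,g')$ on the left by $(\id_{Q_m}, h)$ and on the right by $(\id_{Q_m}, h)$, using the fact that $Q_m$ is a monoid. This gives $(f, hg)\,\theta\,(f, hg')$ and $(f, gh)\,\theta\,(f, g'h)$, i.e.\ $(hg, hg') \in \theta_f$ and $(gh, g'h) \in \theta_f$.

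For (2), suppose $|f'| \le |f|$. By the description of the principal ideals in $Q_m$ recalled in Section \ref{s:pre}, we have $f' \in Q_m f Q_m$, so there exist $a, b \in Q_m$ with $f' = a f b$. Now take any $(g,g') \in \theta_f$, so that $(f,g)\,\theta\,(f,g')$. Multiplying on the left by $(a, \id_{Q_n})$ and on the right by $(b, \id_{Q_n})$, compatibility of $\theta$ yields
\[
(afb, g)\,\theta\,(afb, g'),
\]
that is, $(f', g)\,\theta\,(f', g')$, so $(g,g') \in \theta_{f'}$. Hence $\theta_f \subseteq \theta_{f'}$.

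For (3), if $|f| = |f'|$ then both $|f'| \le |f|$ and $|f| \le |f'|$ hold, so (2) applied in both directions gives $\theta_f \subseteq \theta_{f'}$ and $\theta_{f'} \subseteq \theta_f$, hence equality.
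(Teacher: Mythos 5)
Your proof is correct and follows essentially the same route as the paper's: compatibility of $\theta_f$ via multiplication by pairs with identity first coordinate, and part (2) via writing $f'=afb$ using $Q_mfQ_m=\{h: |h|\le |f|\}$ and multiplying by $(a,\id_{Q_n})$ and $(b,\id_{Q_n})$, with (3) by symmetry.
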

\begin{proof}
(1) That $\theta_f$ is an equivalence on $Q_n$ is clear. The compatibility follows from the fact that $Q_m$ has  an identity; indeed
$$(g,g')\in\theta_f\Rightarrow (f,g)\theta (f,g')\Rightarrow (f,g)(1,h)\theta (f,g')(1,h)$$
$$\Rightarrow  (f,gh)\theta (f,g'h)\Rightarrow gh\theta_fg'h.$$
Similarly we prove the left compatibility.

(2) If $|f'| \le |f|$ then $f'\in Q_mfQ_m$ and hence we have  $f'=hfh'$, for some $h,h'\in Q_m$;
in addition $(f,g) \theta (f,g')$ implies $$(f',g)=(h,1)(f,g)(h',1) \theta (h,1)(f,g')(h',1)=(f',g')$$
 so that $\theta_f \subseteq \theta_{f'}$. Condition (3) follows from (2) and its symmetric.

% Now, in a regular semigroup,  each $\mathcal{J}$-class contains an idempotent $\bar f$ for which  $\{\bar f\} \times Q_n$ is a subsemigroup of $P$ isomorphic $Q_n$, from which it follows easily that $\th_{\bar f}$ is a congruence.
\end{proof}
In a similar way, given a congruence $\th$ on $Q_m \times Q_n$ and fixed $g \in Q_n$, we define $\theta_g:=\{ (f,f')\in Q_m\times Q_m\mid (f,g)\theta (f',g)\}.$

The next result describes the ideals of $Q_n\times Q_m$.

\begin{lem}\label{lem:ideals1}
Let $Q\in \{\In,\T,\PT\}$. The ideals of $P:=Q_m\times Q_n$ are exactly the unions of sets of the form $I^{(m)}_i\times I^{(n)}_j$, where $I^{(m)}_i$ and $I^{(n)}_j$ are ideals of $Q_m$ and $Q_n$, respectively.
\end{lem}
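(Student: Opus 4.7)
The plan is to prove a double inclusion: every union of such rectangles $I_i^{(m)} \times I_j^{(n)}$ is an ideal of $P$, and conversely every ideal of $P$ arises as such a union.

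For the first direction, I would observe that any single rectangle $I_i^{(m)} \times I_j^{(n)}$ is an ideal of $P$, since for $(f,g) \in I_i^{(m)} \times I_j^{(n)}$ and $(h_1,h_2),(h_1',h_2') \in P$ we have $h_1 f h_1' \in I_i^{(m)}$ and $h_2 g h_2' \in I_j^{(n)}$ (because the factors are ideals of $Q_m$ and $Q_n$, respectively). Since an arbitrary union of ideals of $P$ is again an ideal, this direction is immediate.

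For the converse, let $J$ be an ideal of $P$. The key point is that for $(f,g)\in J$ the principal ideal it generates factors as a product of principal ideals in the two coordinates:
\[
P(f,g)P \;=\; (Q_m f Q_m) \times (Q_n g Q_n).
\]
The inclusion $\subseteq$ is obvious. For $\supseteq$, given $h_1 f h_1' \in Q_m f Q_m$ and $h_2 g h_2' \in Q_n g Q_n$, we use that $Q_m$ and $Q_n$ are monoids and write
\[
(h_1 f h_1',\, h_2 g h_2') \;=\; (h_1, h_2)\,(f,g)\,(h_1', h_2') \;\in\; P(f,g)P.
\]
By the description of principal ideals in $Q_m$ and $Q_n$ recalled in Section~\ref{s:pre}, $Q_m f Q_m = I_{|f|}^{(m)}$ and $Q_n g Q_n = I_{|g|}^{(n)}$. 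Hence $P(f,g)P = I_{|f|}^{(m)} \times I_{|g|}^{(n)} \subseteq J$, and therefore
\[
J \;=\; \bigcup_{(f,g)\in J} \{(f,g)\} \;\subseteq\; \bigcup_{(f,g)\in J} I_{|f|}^{(m)} \times I_{|g|}^{(n)} \;\subseteq\; J,
\]
which shows that $J$ is a union of rectangles of the required form.

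There is no serious obstacle here; the only substantive point is the factorization $P(f,g)P = (Q_m f Q_m)\times (Q_n g Q_n)$, which relies on $Q_m$ and $Q_n$ both possessing identities so that the two coordinates can be manipulated independently. The rest is bookkeeping using the already-established fact that every ideal of $Q_n$ is of the form $I_i^{(n)}$.
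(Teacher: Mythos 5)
Your proof is correct and follows essentially the same route as the paper's: both directions are handled identically, with the converse resting on the observation that $P(f,g)P = (Q_m f Q_m)\times(Q_n g Q_n) = I_{|f|}^{(m)}\times I_{|g|}^{(n)} \subseteq J$ and then writing $J$ as the union of these rectangles. You merely make the factorization of the principal ideal explicit where the paper leaves it implicit; nothing of substance differs.
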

\begin{proof}
That any union of sets of the form $I^{(m)}_i\times I^{(n)}_j$ is an ideal of $P$ is obvious.

Conversely, let $I$ be an ideal of $P$, and $(f,g)\in I$. Then, by the definition of an ideal of $P$, we have $P(f,g)P \subseteq I$,  for every $(f,g)\in I$.  Let $(f',g')\in I_{|f|}\times I_{|g|}$. Then $f'\in Q_m fQ_m$ and $g' \in Q_ngQ_n$ so that $(f',g')\in P(f,g)P\subseteq I$. It follows that $\cup_{(f,g)\in I}I_{|f|}\times I_{|g|} \subseteq I$. Regarding the reverse inclusion, let $(f,g)\in I$; it is self-evident that $(f,g)\in I_{|f|}\times I_{|g|}$ and hence   $\cup_{(f,g)\in I}I_{|f|}\times I_{|g|} \supseteq I$. The result follows.
\end{proof}

\begin{lem} \label{lem:ideals}
Let $\theta$ be a congruence of $P:=Q_m\times Q_n$.
\begin{enumerate}
\item If $Q\in \{\mathcal{PT},\In\}$, then  $\theta $  contains a  class $I_\theta$ which is an ideal;
\item If $Q=\mathcal{T}$
and  both $\pi_1(\th)$ and $\pi_2(\th)$ are non-trivial, then $\th$ contains a class $I_\theta$ which is an ideal;
\item $\th$ contains at most one  ideal class.
\end{enumerate}
\end{lem}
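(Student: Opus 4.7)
For (3), if $K_1$ and $K_2$ are $\theta$-classes that are both ideals of $P$, then for any $x \in K_1$ and $y \in K_2$ the product $xy$ lies in $K_1$ (since $K_1$ is an ideal containing $x$) and in $K_2$ (since $K_2$ is an ideal containing $y$), forcing $K_1 \cap K_2 \ne \emptyset$ and hence $K_1 = K_2$. Part (1) is immediate: when $Q \in \{\PT, \In\}$, the pair $\mathbf{0} := (\emptyset_m, \emptyset_n)$ is a zero of $P$, and the $\theta$-class of a zero is automatically an ideal, since $(f,g)(h,k)\,\theta\,\mathbf{0}(h,k)=\mathbf{0}$ for every $(h,k) \in P$, and similarly on the left.

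The real work is in (2). The plan is to show that the minimum ideal $I_1^{(m)}\times I_1^{(n)}$ of $P$ (see Lemma \ref{lem:ideals1}) is contained in a single $\theta$-class $K$, and that $K$ itself is an ideal. The key leverage is that every constant $c \in \mathcal{T}_i$ is a right zero. If $\pi_1(\theta)$ is non-trivial, pick $(f,g)\,\theta\,(f',g')$ with $f\ne f'$ and right-multiply by $(1,c)$ for a constant $c \in Q_n$: since $gc=g'c=c$, this gives $(f,c)\,\theta\,(f',c)$, so $\theta_c$ is a non-trivial congruence on $Q_m$. Because $\con(Q_m)$ is a chain whose smallest non-trivial element is the Rees congruence $\theta_{I_1^{(m)}}$, we have $\theta_{I_1^{(m)}} \subseteq \theta_c$, and by Lemma \ref{lem:single}(3) this holds for every constant $c \in Q_n$. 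The symmetric argument, applied to $\pi_2(\theta)$ and a constant $d \in Q_m$, gives $\theta_{I_1^{(n)}} \subseteq \theta_d$ for every constant $d \in Q_m$.

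Combining these, for any constants $c_1,c_2 \in Q_m$ and $d_1,d_2 \in Q_n$ one chains $(c_1,d_1)\,\theta\,(c_2,d_1)\,\theta\,(c_2,d_2)$, so $I_1^{(m)} \times I_1^{(n)}$ collapses to a single $\theta$-class $K$. To verify that $K$ is an ideal, fix $(c,d) \in I_1^{(m)} \times I_1^{(n)}$; for any $(f,g) \in K$ and $(h,k) \in P$, applying $\theta$ to $(f,g)\,\theta\,(c,d)$ yields $(f,g)(h,k)\,\theta\,(ch,dk)$ and $(h,k)(f,g)\,\theta\,(hc,kd)$, and since $hc=c$, $kd=d$ and $ch$, $dk$ are still constants, both products lie in $I_1^{(m)}\times I_1^{(n)}\subseteq K$. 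The only delicate point is the first step, converting the projection hypotheses into non-triviality of $\theta_c$ and $\theta_d$; after that, the chain structure of $\con(Q_i)$ together with the zero-like behaviour of constants finishes everything.
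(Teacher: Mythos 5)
Your proof is correct and follows essentially the same route as the paper: part (1) via the zero element, part (2) by using both projection hypotheses to show that the minimal ideal $I_1^{(m)}\times I_1^{(n)}$ collapses into a single $\theta$-class which is then verified to be an ideal via the right-zero behaviour of constants, and part (3) by a uniqueness argument (the paper phrases it as the ideal class being the zero of $P/\theta$, which is the same observation as your $xy\in K_1\cap K_2$ computation).
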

\begin{proof}(1)
If $Q$ is $\mathcal{PT}$ or $\mathcal{I}$, then $P$ has a zero, whose congruence class is easily seen to be a unique ideal of $P$.

(2) If $Q$ is $\mathcal{T}$, let $c_a$ denote the constant map whose  image is $\{a\}$, for some $a\in \{1,\ldots,n\}$. Since $\pi_2(\th)$ is non-trivial, it follows that there exist  $f, f' \in Q_m$
and distinct  $g,g' \in Q_n$ such that $(f,g) \th (f',g')$. Thus  $(f,g)(c_a,1) \th (f',g')(c_a,1)$, that is, $(c_a, g)  \th  (c_a, g')$,
so that $\th_{c_a}$ is non-trivial. By Theorem \ref{th:congrQn}, the ideal $I^{(n)}_1$ is contained in a class of $\th_{c_a}$. Similarly, we pick $b\in \{1,\ldots,n\}$ concluding that  $I^{(m)}_1 \times\{c_b\}$ lies in a $\theta$-class.

We claim that $I^{(m)}_1 \times I^{(n)}_1$ is contained in one $\theta$-class. In fact, let $(c_a,c_b),(c_d,c_e)\in I^{(m)}_1 \times I^{(n)}_1$. Then $(c_a,c_d)\in \theta_{c_e}$, since   $I^{(m)}_1 \times\{c_e\}$ lies in a $\theta$-class, and similarly $(c_b,c_e)\in \theta_{c_a}$. Thus $(c_a,c_e)\theta(c_a,c_b)$ and $(c_d,c_e)\theta(c_a,c_e)$.  Therefore  by transitivity  we get $(c_a,c_b)\theta(c_d,c_e)$. We have proved that $I^{(m)}_1 \times I^{(n)}_1$ is contained in the $\theta$-class of $(c_a,c_b)$.

Conversely, given any $(f,g)$ in the $\th$-class of $ (c_a,c_b)$ and any $(f',g')\in P$, we have $$(ff',gg')\theta(c_af,c_bg)\mbox{ and }(c_af,c_b) \in I^{(m)}_1 \times I^{(n)}_1\subseteq [(c_a,c_b)]_\theta,$$
so $(ff',gg') \in [(c_a,c_b)]_\th$;
similarly we prove that
$(f'f,g'g)\in [(c_a,c_b)]_\theta$. Thus $[(c_a,c_b)]_\theta$ is an ideal.

(3) The last assertion holds in all semigroups, as any ideal class is a (necessarily unique) zero element of the quotient semigroup $P/\th$.
%Given any $(f,g)\in (c_a,c_b)\theta$ and any $(f',g')\in P$, we have $(ff',gg')\theta=(c_af,c_bg)\theta$, with $(c_af,c_bg)\in I^{(m)}_1 \times I^{(n)}_1\subseteq (f,g)\theta$; thus  $(f,g)\theta (f',g')\theta=(f,g)\theta$; similarly we prove that $(f',g')\theta (f,g)\theta=(f,g)\theta$. Thus $(f,g)\theta$ is a zero of $P/\theta$ and hence there can be no other class which is an ideal of $P/\theta$.
%
%For the remaining claims observe that
% any $\bigcup I_i \times I_j$ is an ideal. If $I$ is an arbitrary ideal with
%$(f,g) \in I$, $|f|=i, |g|=j$ then $I_i \times I_j = Q_n f Q_n \times Q_m g Q_m \subseteq P (f,g) P$ from which it follows that $I$ must have the form   $\bigcup I_i \times I_j$.
\end{proof}

For the remains of this section we fix the following notations.
Let $Q\in  \{\T,\mathcal{PT},\In\}$, $f,f'\in Q_m$ and $g,g'\in Q_n$. Let $\theta$ be a principal congruence on $Q_m\times Q_n$ generated by $((f,g),(f',g'))$. Let $\theta_1$ be the principal congruence generated by $(f,f')$ in $Q_m$ and $\theta_2$ be the principal congruence generated by $(g,g')$ in $Q_n$.

%We will now describe all principal congruences of $P$. Throughout, let $\theta$ be a congruence generated by $((f,g), (f',g'))$. In all cases below, it is straightforward to check that our constructions do define congruences containing the generating pair. We will therefore not mention this explicitly.

\begin{lem}\label{lm:equal} Let $\theta$ be a principal congruence on $Q_m\times Q_n$ generated by $((f,g),(f',g'))$.
If $f=f'$ and $g\ne g'$, we have $(a,b)\theta(c,d)$ if and only if  $(a,b)= (c,d)$ or $a=c$, $|a|\le |f|$ and $b\,\theta_2 \,d$.
\end{lem}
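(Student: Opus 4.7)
The plan is a double-inclusion proof based on the standard chain characterization of the principal congruence generated by a pair in a monoid.

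For the ``only if'' direction, suppose $(a,b)\,\theta\,(c,d)$ with $(a,b)\ne(c,d)$. Then there is a chain $(a,b)=(a_0,b_0),\dots,(a_k,b_k)=(c,d)$, with $k\ge 1$, where each consecutive pair has the form
\[
\{(a_i,b_i),(a_{i+1},b_{i+1})\}=\{(x,y)(f,g)(x',y'),\,(x,y)(f',g')(x',y')\}
\]
for some $(x,y),(x',y')\in P$. Since $f=f'$, each such step leaves the first coordinate fixed at $xfx'$, so all the $a_i$ coincide and $a=c$, while $a=xfx'\in Q_mfQ_m$ forces $|a|\le|f|$. On the second coordinate, each step is a generating move $\{b_i,b_{i+1}\}=\{ygy',yg'y'\}$ of $\theta_2$, so $b\,\theta_2\,d$.

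For the converse, the case $(a,b)=(c,d)$ is immediate. Otherwise $a=c$, $|a|\le|f|$, and $b\,\theta_2\,d$; the description of ideals recalled in Section~\ref{s:pre} yields a factorisation $a=hfk$ with $h,k\in Q_m$. Pick a chain $b=b_0,\dots,b_\ell=d$ witnessing $b\,\theta_2\,d$, each step being $\{b_i,b_{i+1}\}=\{ugv,ug'v\}$ for some $u,v\in Q_n$. Multiplying the generating pair $((f,g),(f',g'))$ of $\theta$ on the left by $(h,u)$ and on the right by $(k,v)$ (using $f=f'$) gives $(a,b_i)=(hfk,ugv)\,\theta\,(hfk,ug'v)=(a,b_{i+1})$, and transitivity yields $(a,b)\,\theta\,(c,d)$.

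The only delicate point is the rank bookkeeping on the first coordinate: because $f=f'$, every generating move pins the first coordinate inside the principal ideal of $f$, which is precisely what $|a|\le|f|$ encodes; conversely, the same inequality is exactly what permits the factorisation $a=hfk$ required in the other direction.
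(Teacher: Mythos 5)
Your proof is correct. The ``if'' direction is essentially the paper's argument: both exploit $|a|\le|f|$ to write $a\in Q_mfQ_m$ and then multiply the generating pair on the left and right (the paper phrases this as $\theta_2\subseteq\theta_a$ using Lemma~\ref{lem:single}, while you unpack the $\theta_2$-chain explicitly, but the computation is the same). Where you genuinely diverge is in the ``only if'' direction: the paper establishes $\theta\subseteq\theta'$ by asserting that the relation $\theta'$ described in the statement is a congruence containing $((f,g),(f,g'))$, leaving that verification to the reader, whereas you instead analyse an arbitrary Mal'cev chain of elementary transitions $(x,y)(f,g)(x',y')\leftrightarrow(x,y)(f',g')(x',y')$ and read off directly that the first coordinate is pinned at $xfx'$ (giving $a=c$ and $|a|\le|f|$) while the second coordinate performs elementary $\theta_2$-moves. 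Your route has the advantage of making explicit the step the paper omits, and it avoids having to check compatibility and transitivity of $\theta'$ by hand; the paper's route is shorter on the page but defers exactly that work. Both are sound.
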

\begin{proof}
Let $\th'$ be the binary relation defined by the statement of the lemma. If $|a|\le |f|$,  then $a=ufv$, for some $u,v\in Q_m$ and hence
$$((a,g), (a,g') )= ((ufv,1g1), (uf'v,1g'1)) \in \th.$$ It follows that
$(g,g') \in \th_a$ whence $\th_2 \subseteq \th_a$. Now if $(b,d) \in \th_2$, then $(b,d) \in \th_a$, and therefore $(a,b) \th (a,d)$.
 Hence $\th' \subseteq \th$.

Conversely, it is straightforward to check that $\th'$ is a congruence containing $((f,g),(f,g'))$, thus
$\th \subseteq \th'$. The result follows.
\end{proof}
The following corollary gives a more direct description of the congruences covered by Lemma \ref{lm:equal} by incorporating the structure of the congruence $\th_2$ on $Q_n$. By applying Theorem \ref{th:congrQn}, we obtain
\begin{cor}\label{c:equal}
Let $\theta$ be a principal congruence on $Q_m\times Q_n$ generated by $((f,g),(f,g'))$.  If $(g,g') \notin \Hg$, let $k=\max\{|g|,|g'|\}$. Then $\th_2$ is the Rees congruence
$\th_{I_k}$ and $(a,b) \th (c,d)$ if and only if one of the following holds:
\begin{enumerate}
  \item $(a,b)=(c,d)$, $|a|=|c| > |f|$ or $|b|=|d| > k$;
  \item $a=c$ and $|a|=|c| \le |f|$, $|b|, |d| \le k$.
\end{enumerate}
If $(g,g') \in \Hg$ and $g \ne g'$ then $\th_2= \th(k,N)$, for $k =|g|$ and $N= \langle \sigma \rangle$, where $g'=g \cdot \sigma$ with $\sigma \in S_{k}$.
Moreover, $(a,b) \th (c,d)$ if and only if one of the following holds:
\begin{enumerate}
  \item $(a,b)=(c,d)$, $|a|=|c| > |f|$ or $|b|=|d|>k$;
  \item $a=c$, $|b|=|d|=k$, $b \Hg d$ and $d=b \cdot \omega$ for some $\omega \in N$;
  \item $a=c$, $|a|=|c| \le |f|$, $|b|, |d| < |g|$.
\end{enumerate}
\end{cor}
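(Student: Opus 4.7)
The plan is to combine Lemma \ref{lm:equal} with the explicit structure of the principal congruence $\th_2$ generated by $(g,g')$ on $Q_n$, which is spelled out in the paragraph that precedes the statement and is an immediate consequence of Theorem \ref{th:congrQn}. The identifications of $\th_2$ asserted in the corollary (the Rees congruence $\th_{I_k}$ in the non-$\Hg$ case, and $\th(|g|,N)$ with $N=\langle\sigma\rangle$ in the $\Hg$ case) therefore require no additional work beyond citing that paragraph.

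The next step is to rewrite the condition $b\,\th_2\,d$ by feeding these two descriptions through Theorem \ref{th:congrQn}. In the non-$\Hg$ case, $b\,\th_{I_k}\,d$ exactly when $b=d$ or both $|b|,|d|\le k$. In the $\Hg$ case with $k=|g|$, the theorem gives $b\,\th(k,N)\,d$ iff either $b=d$ and $|b|>k$, or $|b|,|d|<k$, or $|b|=|d|=k$ with $b\,\Hg\,d$ and $d=b\cdot\omega$ for some $\omega\in N$.

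Finally, I would substitute these into Lemma \ref{lm:equal}, which says that $(a,b)\,\th\,(c,d)$ iff $(a,b)=(c,d)$ or [$a=c$, $|a|\le|f|$, and $b\,\th_2\,d$]. The pairs with $(a,b)=(c,d)$ split into those where $|a|>|f|$ or $|b|>k$ (yielding clause (1) in either case) and those where $|a|\le|f|$ and $|b|\le k$, which are absorbed into the remaining clauses via the trivial action $\omega=\id$. The pairs coming from the second alternative of Lemma \ref{lm:equal} with $b\ne d$ form clause (2) in the non-$\Hg$ case, and are split by rank into clauses (2) and (3) in the $\Hg$ case, depending on whether $|b|=|d|=k$ or $|b|,|d|<k$.

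The main obstacle is purely organisational: one must verify that the proposed partition into the stated clauses accounts for every pair produced by Lemma \ref{lm:equal} exactly once, paying particular attention to the boundary $b=d$ (that is, $\omega=\id$) in the $\Hg$ case, so that no pair is missed or counted twice as one reconciles the case split in Lemma \ref{lm:equal} with the finer case split coming from Theorem \ref{th:congrQn}.
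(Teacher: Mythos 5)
Your proposal is correct and follows exactly the route the paper intends: the corollary is stated there without proof as an immediate substitution of the description of $\th_2$ from Theorem \ref{th:congrQn} (via the remarks following it) into Lemma \ref{lm:equal}, which is precisely what you do. Your reading of clause (2) in the $\Hg$ case as implicitly carrying the condition $|a|=|c|\le|f|$ (inherited from the second alternative of Lemma \ref{lm:equal}) is the right one.
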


\begin{lem} \label{lem:Icont}
Let $\theta$ be a principal congruence on $Q_m\times Q_n$ generated by $((f,g),(f',g'))$, and let $j = \max \{|f|,|f'|\}$.
If $g\ne g'$ and $(f,f')\not\in \mathcal{H} $,  then $\th_g$ or $\theta_{g'}$ contains the Rees congruence $\theta_{I_j}$ of $Q_m$.
%Let $\theta$ be the congruence on $Q_m \times Q_m$ generated by $((f,g), (f',g'))$, and assume that $g \ne g'$,
%$f \not\mathcal{H}  f'$. Let  $j = \max \{|f|,|f'|\}$.  Then $\theta_g$ or $\theta_{g'}$  contain the ideal congruence $\theta_{I_j}$ of $Q_m$.
\end{lem}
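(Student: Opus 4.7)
Since $(f,f')\notin\Hg$, the remark following Theorem~\ref{th:congrQn} tells us that the principal congruence of $Q_m$ generated by $(f,f')$ is the Rees congruence $\theta_{I_j}$. Consequently, obtaining $(f,f')\in\theta_g$ already forces $\theta_g\supseteq\theta_{I_j}$, and symmetrically for $\theta_{g'}$. My plan is therefore to reduce the statement to proving at least one of
\[
(f,g)\,\theta\,(f',g)\quad\text{or}\quad(f,g')\,\theta\,(f',g').
\]

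I will assume without loss of generality that $|f|\ge|f'|$, so $j=|f|$; since $f'\in Q_mfQ_m$ one may write $f'=hfk$ for some $h,k\in Q_m$. Multiplying the generating pair by $(h,1)$ on the left and $(k,1)$ on the right gives $(f',g)\,\theta\,(hf'k,g')$, and iterating produces, for every $n\ge 0$,
\[
(a_n,g)\,\theta\,(a_{n+1},g'),\qquad a_n:=h^nfk^n,
\]
with $a_0=f$, $a_1=f'$, $a_2=hf'k$, and so on. Because $Q_m$ is finite, the sequence $(a_n)$ eventually enters a cycle; I would take the minimal $p\ge 1$ with $a_p=a_{p+q}$ for some $q\ge 1$ and compare the relations at indices $p-1$ and $p+q-1$, both of which have $(a_p,g')$ on the right, to deduce by transitivity the nontrivial pair $(a_{p-1},a_{p+q-1})\in\theta_g$. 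The boundary case $p=0$ can only occur when $|f|=|f'|$, in which case a symmetric iteration based on a factorisation $f=h'f'k'$ yields the analogous nontrivial pair in $\theta_{g'}$.

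The final step is to upgrade the pair produced to one of rank exactly $j$ and not $\Hg$-related, since only such a pair generates all of $\theta_{I_j}$ (Theorem~\ref{th:congrQn}). When the orbit $(a_n)$ remains in the $\Dg$-class of $f$, the pair is already of rank $j$ and the argument closes. When instead the orbit drops into a $\Dg$-class of strictly smaller rank, I would combine the cycle-closure pair with an additional relation obtained by multiplying the generator by an element chosen to exploit the specific mode of failure of $\Hg$ for $(f,f')$---a constant map $c_y$ when $Q=\T$, a partial identity supported off $\im f'$ when $Q\in\{\PT,\In\}$, and so on---to boost the rank up to $j$ inside whichever slice congruence the earlier step has populated. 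The hard part will be exactly this rank-control step, and the ``or'' in the conclusion reflects the freedom in choosing which slice ultimately receives the rank-$j$ non-$\Hg$-related pair; a short case analysis keyed to the mode of $\Hg$-failure (domain, image, or kernel difference) is what ultimately delivers it.
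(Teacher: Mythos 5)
Your opening reduction is sound: since $(f,f')\notin\Hg$, any congruence of $Q_m$ containing a non-$\Hg$-related pair whose larger rank equals $j$ must contain $\theta_{I_j}$, so it suffices to plant such a pair in $\theta_g$ or $\theta_{g'}$. But the execution has a genuine gap exactly where you flag ``the hard part''. The cycle-closure pair $(a_{p-1},a_{p+q-1})\in\theta_g$ comes with no control: ranks are non-increasing along $a_n=h^nfk^n$ and only $a_0=f$ is guaranteed to have rank $j$, so for $p\ge 2$ both entries may have rank strictly below $j$; and even when one entry has rank $j$ the two entries may be $\Hg$-related, in which case the pair only generates some $\th(j,N)$ rather than $\theta_{I_j}$. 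Your proposed fix --- ``boost the rank up to $j$'' by further multiplications --- cannot work as literally stated, because multiplying a transformation never increases its rank; what is needed is a non-$\Hg$-related pair at rank $j$ produced directly from the generating pair, and that is the entire content of the lemma. There is also a secondary gap: if both the forward orbit (from $f'=hfk$) and the backward orbit (from $f=h'f'k'$) are purely periodic ($p=p'=0$, which forces $|f|=|f'|$ but is otherwise possible), your transitivity trick yields no nontrivial pair in either slice.

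The paper closes all of this with a single, targeted multiplication instead of an iteration. Assume $|f|\le|f'|=j$. Since $(f,f')\notin\Hg$, the maps differ in image or in kernel. If $\im f\ne\im f'$ (so $\im f'\not\subseteq\im f$ by the rank assumption), pick an idempotent $h$ with $h\,\Lg\,f$; then $\im h=\im f$ and $fh=f$, so $(f'h,g')=(f',g')(h,1)\,\th\,(f,g)(h,1)=(f,g)\,\th\,(f',g')$, whence $(f'h,f')\in\theta_{g'}$. Since $\im(f'h)\subseteq\im h=\im f$ while $\im f'\not\subseteq\im f$, this pair is not $\Hg$-related, and it contains $f'$ itself, of rank exactly $j$; Theorem \ref{th:congrQn} then gives $\theta_{I_j}\subseteq\theta_{g'}$. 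The kernel case is dual, using an idempotent $\Rg$-related to $f$ and multiplying on the left. The idea you are missing is to choose the multiplier so that it \emph{fixes} the smaller-rank side of the generating pair ($fh=f$) while provably disturbing the image (or kernel) of the larger-rank side.
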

\begin{proof}
We will show that for $j=|f'|$, $\th_{I_j} \subseteq \th_{g'}$. An anolog result for $j=|f|$ follows symmetrically. So let us assume that $|f| \le |f'|=j$.
As $(f,f')  \not\in  \mathcal{H}$, $f$ and $f'$ must differ in either image or kernel. We consider
 two cases.

 {First case}:  $\im f \ne \im f'$.

As $|f'|=j\ge |f|$, $\im f' \not \subseteq \im f$. As $Q_m$ is regular, there exists an idempotent $h\in Q_m$ such that $h \Lg f$. Hence $\im h= \im f$, and as $h$ is idempotent,
$fh=f$.

We have that $$(f'h,g') = (f',g') (h, 1)\th  (f,g) (h, 1)=(f h,g) =(f,g) \th (f',g')$$ and
so $(f'h, f') \in \theta_{g'}$. As $\im f' \not \subseteq \im f$, and $\im h=\im f$, the maps  $f'h$ and $f'$ have different images.
%also $|f'h|\le |h|=|f|$.
It follows that $(f'h,f') \notin\Hg$. Now, the congruence $\th'$ generated by $(f'h,f')$ is contained in $\th_{g'}$ and by Theorem \ref{th:congrQn}, we have $\th'=\th_{I_j}$. We get $\th_{I_j} \subseteq \theta_{g'}$.

%and hence the principal $Q_m$-congruence generated by this pair of elements (and therefore $\theta_{g'}$) must contain $I_j$ in one of its classes, by the classification of the congruences of $Q_m$.

%\noindent CASE 2:  $\im f = \im f'$, $\dom f \ne \dom f'$.
%
%We may w.l.o.g. assume that $\dom f' \not\supseteq \dom f$. Then
%$$(\id_{\dom f}f',g')= (\id_{\dom f}),\id) (f',g') \theta (\id_{\dom f},\id)(f,g)=(f,g) \theta (f',g').$$
%So $\id_{\dom f}f' \theta_{g'} f'$. Now as  $\dom f' \not\supseteq \dom f$, we have that $\id_{\dom f}f' \not\mathcal{H} f'$, and therefore $\theta_{g'}$
% must contain $I_j$ in one of its classes, by the classification of the congruences of $PT_m$.

{Second case}:  $\ker f \ne \ker f'$.

Now $|f'|=j \ge |f|$, implies that $\ker f \not\subseteq \ker f'$. As above, the regularity of $Q_m$ implies that there exists an idempotent $h$ that is $\Rg$-related to $f$; thus $h$ and $f$ have the same kernel.
 Hence
$$(hf',g')= (h,1) (f',g') \theta (h,1)(f,g)=(hf,g)=(f,g) \theta (f',g')$$
and so $(hf',f')\in \theta_{g'}$.
Now $\ker f = \ker h \subseteq \ker(hf')$. As $\ker f \not \subseteq \ker f'$, $(hf',f')\not\in \Hg$.  As above, by Theorem \ref{th:congrQn}, we get $\th_{I_{|f'|}}=\th_{I_j} \subseteq \theta_{g'}$.
\end{proof}

\begin{thm}\label{theorem2.5}\label{th:0h}
Let $\theta$ be the congruence on $Q_m \times Q_n$ generated by $((f,g), (f',g'))$, and assume that $(f,f') \not\in \mathcal{H} $, $(g,g') \not\in \mathcal{H} $, $|f|=i, |f'|=j, |g|=k, |g'|=l$.
Then $\theta$ is the Rees congruence on $Q_m\times Q_n$ defined by the ideal $I=I_i\times I_k
\cup  I_j \times I_l$.
\end{thm}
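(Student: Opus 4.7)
The plan is to identify $\theta$ with the Rees congruence $\rho := \theta_I$ on $P := Q_m \times Q_n$ associated to the set $I := I_i \times I_k \cup I_j \times I_l$. First I will note, via Lemma \ref{lem:ideals1}, that $I$ is an ideal of $P$; since the generators $(f,g) \in I_i \times I_k$ and $(f',g') \in I_j \times I_l$ both lie in $I$, the Rees congruence $\rho$ identifies them, yielding $\theta \subseteq \rho$ immediately. The substantive content of the theorem is the converse, which amounts to showing that every $(a,b) \in I$ is $\theta$-related to a fixed element, say $(f,g)$.

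To produce such a relation I will combine Lemma \ref{lem:Icont} with its analog obtained by swapping the roles of the two factors. Writing $M := \max\{i,j\}$ and $M' := \max\{k,l\}$, these lemmas deliver, for appropriate $g_* \in \{g,g'\}$ and $f_* \in \{f,f'\}$, the Rees inclusions $\theta_{g_*} \supseteq \theta_{I_M^{(m)}}$ on $Q_m$ and $\theta_{f_*} \supseteq \theta_{I_{M'}^{(n)}}$ on $Q_n$. Reading off from the proof of Lemma \ref{lem:Icont} that $g_* = g'$ precisely when $|f| \le |f'|$ (and analogously for $f_*$), and using the symmetry of the generating pair to assume $i \le j$, I obtain $\theta_{g'} \supseteq \theta_{I_j^{(m)}}$; the symmetric lemma then yields either $\theta_{f'} \supseteq \theta_{I_l^{(n)}}$ (if $k \le l$) or $\theta_f \supseteq \theta_{I_k^{(n)}}$ (if $k > l$).

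In the easy sub-case $k \le l$, we have $I = I_j \times I_l$. For any $(a,b) \in I$, the bounds $|a| \le j = |f'|$ and $|b|, |g'| \le l$, combined with Lemma \ref{lem:single}(2), give $\theta_a \supseteq \theta_{f'} \supseteq \theta_{I_l^{(n)}}$, hence $(a,b)\,\theta\,(a,g')$; and $(a,f') \in \theta_{I_j^{(m)}} \subseteq \theta_{g'}$ gives $(a,g')\,\theta\,(f',g')$. Composing with the original generator yields $(a,b)\,\theta\,(f,g)$.

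In the harder sub-case $k > l$, the ideal $I$ is L-shaped. For $(a,b) \in I_i \times I_k$ the same template succeeds with $\theta_f \supseteq \theta_{I_k^{(n)}}$ in place of $\theta_{f'} \supseteq \theta_{I_l^{(n)}}$, using $|a| \le i = |f|$. For $(a,b) \in I_j \times I_l$ I reduce to the previous via the generator: writing $a = u f' v$ and $b = p g' q$ (possible since $|a| \le j = |f'|$ and $|b| \le l = |g'|$), I get
\[
(a,b) = (u,p)(f',g')(v,q) \,\theta\, (u,p)(f,g)(v,q) = (ufv, pgq) \in I_i \times I_k,
\]
already handled. The main technical subtlety is tracking which member of $\{g,g'\}$ and $\{f,f'\}$ is controlled by each lemma, and handling the L-shape of $I$ when $(i,k)$ and $(j,l)$ are incomparable.
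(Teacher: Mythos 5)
Your proof is correct. It is built from the same ingredients as the paper's argument --- Lemma \ref{lem:Icont}, its dual, and the monotonicity $\theta_f\subseteq\theta_{f'}$ for $|f'|\le|f|$ from Lemma \ref{lem:single} --- but the endgame is organised differently. The paper first invokes Lemma \ref{lem:ideals} to produce an ideal $\theta$-class $K$ containing a bottom element $(z,z')$ (a pair of constants or of empty maps according to whether $Q=\T$ or $Q\in\{\PT,\In\}$), shows $(f,g)\in K$ by a two-case analysis whose ``mixed'' branch uses the sandwich trick $f=fhf$ to splice $(f,z')$ to $(z,g')$, and finally deduces $I\subseteq K$ from the minimality of $I$ among ideals containing the generating pair. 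You bypass the ideal-class lemma, the choice of $z,z'$, and the sandwich trick entirely: by resolving the disjunction in Lemma \ref{lem:Icont} (which does require peeking at its proof, since the statement only asserts ``$\theta_g$ or $\theta_{g'}$'' --- legitimate, but worth flagging) and normalising $i\le j$ by symmetry, you show directly that every element of $I$ is $\theta$-related to $(f,g)$, treating the two arms of the L-shaped ideal separately and folding $I_j\times I_l$ into the already-handled $I_i\times I_k$ by multiplying the generating pair. The chains do check out: in the $k>l$ arm the first step still lands on $(a,g')$ because $g'\in I_l\subseteq I_k$, after which the $k\le l$ template applies verbatim. What your route buys is a uniform, element-wise argument with no case split on $Q$; what the paper's buys is reuse of the ``ideal class plus minimality'' pattern elsewhere (e.g.\ in Lemma \ref{lm:t1}).
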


\begin{proof}
If  $Q=\T$, then  pick two arbitrary constants $z=c_a$ and $z'=c_b$ in  $Q_m$ and $Q_n$, respectively. If $Q\in \{\PT,\In\}$, let  $z, z'$ be the empty maps in $Q_m$ and $Q_n$.

As $f\ne f'$ and $g\ne g'$,
by Lemma \ref{lem:ideals}, the congruence $\theta$ contains an ideal class $K$. As $(z,z')$ lies in the smallest ideal $I_{0*} \times I_{0*}$ of $P$,  $(z,z') \in K$. We claim that $(f,g) \in K$.
%$I \subseteq K$. It will suffice to
%establish that $(f,g) \in K$, as $I$ is the smallest ideal of $P$ containing both $(f,g)$ and $(f',g')$.

To show this, note that by Lemma \ref{lem:Icont}, either  $\theta_g$ or $\theta_{g'}$  contains the Rees congruence $\theta_{I_{\max\{i,j\}}}$. The dual of Lemma \ref{lem:Icont} guarantees that either  $\theta_{I_{\max\{k,l\}}} \subseteq \theta_f$ or $\theta_{I_{\max\{k,l\}}} \subseteq \theta_f'$.
Up to symmetry, there are two cases.

{First case}:   $\theta_{I_{\max\{k,l\}}} \subseteq \theta_f$, $\theta_{I_{\max\{i,j\}}} \subseteq \theta_{g}$

We have that $g, z'\in I_{\max\{k,l\}}$, so $(g,z') \in \theta_{I_{\max\{k,l\}}} \subseteq \theta_f$, that is,  $(f,g)\theta (f,z')$.

 As $f,z\in I_{\max\{i,j\}}$, an anolog argument shows that $(f,z)\in \theta_g$. By Lemma \ref{lem:single}, we have $\theta_g\subseteq \theta_{z'}$, and so $(f,z)\in \theta_{z'}$. Thus $(f,z')\theta(z,z')$. Therefore
  $$(f,g)\theta (f,z') \theta (z,z')\in K.$$

  % $z,z'$ are either constants or empty; it follows that $[(f,g)]_\theta$ is an ideal, say $K$.

%On the other hand,   $(f,g),(f',g')\in :=I_i\times I_k \cup I_j\times I_l$, so they are $\theta_I$-related, whence $\theta \subseteq \theta_I$. Thus $K\subseteq I$. Now $I_i=Q_mfQ_m$ and $I_k=Q_ngQ_n$ so $I_i\times I_k$ is the smallest ideal that contains both $(f,g)$ and $(f',g')$,  so that $I\subseteq K$. It is proved that $I=K$.

{Second case}:   $\theta_{I_{\max\{k,l\}}} \subseteq \theta_f$, $\theta_{I_{\max\{i,j\}}} \subseteq \theta_{g'}$

%As in the previous case, it suffices to show that $(f,g)$ is $\th$-related to an element of the smallest $\mathcal{D}-class$.

We have $g,z'\in I_{\max\{k,l\}}$ so $(g,z')\in \theta_f$ and similarly $(f',z)\in \theta_{g'}$. Thus
%$\theta_{I_{\max\{k,l\}}} \subseteq \theta_f$, $\theta_{I_{\max\{i,j\}}} \subseteq \theta_g'$
$$(f,z') \th (f,g) \th (f',g') \th (z,g').$$

Let $h \in S_m$ be such that $fhf=f$ (such $h$ clearly exists). We then have
\begin{eqnarray*}
 (f,g) \th (f,z')&=&(fhf,z')\\
 &=&(f,z')(h,z')(f,z')\\& \th &(f,z')(h,z')(z,g') \\
 &=&(z,z') \;\in I_{0*}\times I_{0*} \subseteq K.
\end{eqnarray*}
%where $z'g \in \mathcal{J}_1$ if $Q=\mathcal{T}$ and $z'g=0$ otherwise. Hence $[(f,g)]_\theta $ is an ideal and the result follows as in the first case.
Hence in both cases $(f,g)\in K$. As $K$ is a class of $\th$, then $(f',g') \in K$ as well. Now $I\subseteq K$, as $I$ is the smallest ideal containing $\{(f,g),(f',g')\}$.
It follows that $\th_I \subseteq \th_K \subseteq \th$. Conversely $\th \subseteq \th_I$  as $\th$ is generated by $((f,g),(f',g'))$ and $\th_I$ is a congruence that contains  $((f,g),(f',g')$. Hence $\th=\th_I.$
\end{proof}

\begin{cor}
Under the conditions of Theorem \ref{theorem2.5}, if $i \le j$ and $k \le l$ then $\theta=\theta_{I_j \times I_l}$.
\end{cor}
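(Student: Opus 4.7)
The plan is to observe that the corollary is a direct specialization of Theorem \ref{theorem2.5} once one recognizes how the hypothesis $i \le j$, $k \le l$ simplifies the ideal $I = I_i \times I_k \cup I_j \times I_l$ produced there. So first I would invoke Theorem \ref{theorem2.5} to conclude that $\theta = \theta_I$ where $I = I_i \times I_k \cup I_j \times I_l$, and then I would argue that under the rank hypotheses this union collapses to a single rectangle.

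The key step is the monotonicity of the chain of ideals $I_0 \subseteq I_1 \subseteq \cdots \subseteq I_n$ in $Q_m$ (and similarly in $Q_n$), which was recorded at the end of Section \ref{s:pre}: $I_i^{(m)}$ consists of all maps of rank at most $i$, so $i \le j$ forces $I_i^{(m)} \subseteq I_j^{(m)}$, and likewise $k \le l$ yields $I_k^{(n)} \subseteq I_l^{(n)}$. Taking cartesian products preserves inclusions, so $I_i \times I_k \subseteq I_j \times I_l$, and hence
\[
I = I_i \times I_k \cup I_j \times I_l = I_j \times I_l.
\]
Substituting this into the conclusion of Theorem \ref{theorem2.5} gives $\theta = \theta_{I_j \times I_l}$, which is exactly the claim. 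There is no real obstacle here: the whole content is that the "larger" rectangle swallows the "smaller" one, so no additional congruence-theoretic argument beyond Theorem \ref{theorem2.5} is required.
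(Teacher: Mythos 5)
Your argument is correct and is precisely the (omitted) reasoning the paper intends: since $i \le j$ and $k \le l$ give $I_i \times I_k \subseteq I_j \times I_l$, the ideal $I$ from Theorem \ref{theorem2.5} collapses to $I_j \times I_l$, and the conclusion follows immediately. Nothing further is needed.
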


\begin{thm}\label{th:1h}
Let $\theta$ be the congruence on $Q_m \times Q_n$ generated by $((f,g), (f',g'))$, and let $\th_2$ be the congruence on $Q_n$ generated by $(g,g')$.
If $g\ne g'$, $(g,g')\in \Hg$ and $(f,f')\not\in \Hg$; let $j=\mbox{max}\{|f|,|f'|\}$ and $k=|g|=|g'|$.
 Then  $(a,b) \th (c,d)$
 if and only if $(a,b)=(c,d)$ or
$|a|, |c| \le j$, $|b|, |d| \le k$, $b \th_2 d$.
\end{thm}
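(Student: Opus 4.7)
The plan is to show both inclusions, following the pattern of Theorem \ref{theorem2.5}. Denote by $\theta'$ the relation on $Q_m\times Q_n$ described on the right-hand side of the statement. I would first verify that $\theta'$ is a congruence containing the generating pair $((f,g),(f',g'))$, which yields $\theta\subseteq\theta'$; the generating pair lies in $\theta'$ since $|f|,|f'|\le j$, $|g|=|g'|=k$, and $g\,\theta_2\,g'$ by definition of $\theta_2$. Reflexivity and symmetry of $\theta'$ are immediate; transitivity uses transitivity of $\theta_2$ after separating the trivial clause. For compatibility, if $(a,b)\,\theta'\,(c,d)$ lies in the nontrivial clause and $(u,v)\in Q_m\times Q_n$, then ranks only decrease under multiplication, so $|au|,|cu|\le j$ and $|bv|,|dv|\le k$, while $bv\,\theta_2\,dv$ since $\theta_2$ is a congruence on $Q_n$; left multiplication is analogous.

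For the harder direction $\theta'\subseteq\theta$, the key is to apply Lemma \ref{lem:Icont} to the generating pair: since $g\ne g'$ and $(f,f')\notin\Hg$, one of $\theta_g,\theta_{g'}$ contains the Rees congruence $\theta_{I_j}$ on $Q_m$. Because $g\,\Hg\,g'$ forces $|g|=|g'|=k$, Lemma \ref{lem:single}(3) yields $\theta_g=\theta_{g'}$, so in fact $\theta_{I_j}\subseteq\theta_g=\theta_{g'}$. As $|f|,|f'|\le j$, this gives $(f,f')\in\theta_{I_j}\subseteq\theta_g\cap\theta_{g'}$, hence $(f,g)\,\theta\,(f',g)$ and $(f,g')\,\theta\,(f',g')$; chaining with the generating pair $(f,g)\,\theta\,(f',g')$ delivers $(g,g')\in\theta_f\cap\theta_{f'}$.

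Now pick $f''\in\{f,f'\}$ with $|f''|=j$. Since $(g,g')\in\theta_{f''}$ and $\theta_{f''}$ is a congruence on $Q_n$, it contains the principal congruence $\theta_2$, and Lemma \ref{lem:single}(2) upgrades this to $\theta_2\subseteq\theta_{f''}\subseteq\theta_a$ for every $a\in Q_m$ with $|a|\le j$. Given $(a,b)\,\theta'\,(c,d)$ in the nontrivial clause, $b\,\theta_2\,d$ then yields $(a,b)\,\theta\,(a,d)$. For the remaining step $(a,d)\,\theta\,(c,d)$, note that $|d|\le k=|g|$ so Lemma \ref{lem:single}(2) applied on the other side gives $\theta_g\subseteq\theta_d$, and since $|a|,|c|\le j$, we get $(a,c)\in\theta_{I_j}\subseteq\theta_g\subseteq\theta_d$. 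Concatenating produces $(a,b)\,\theta\,(c,d)$, completing the proof. The main subtlety I expect is the ``or'' in Lemma \ref{lem:Icont}, which I would upgrade to ``and'' using that $|g|=|g'|$ forces $\theta_g=\theta_{g'}$; without this one cannot simultaneously obtain $(g,g')\in\theta_f$ and $(g,g')\in\theta_{f'}$, and the choice of $f''$ of maximum rank needed to feed into Lemma \ref{lem:single}(2) would be unavailable.
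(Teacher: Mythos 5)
Your proof is correct and follows essentially the same route as the paper: both verify that the stated relation is a congruence containing the generating pair for one inclusion, and for the other combine Lemma \ref{lem:Icont} with the dual of Lemma \ref{lem:single} (using $|g|=|g'|$ to get $\theta_g=\theta_{g'}\supseteq\theta_{I_j}$), deduce $(g,g')\in\theta_{f''}$ for $f''$ of rank $j$ so that $\theta_2\subseteq\theta_a$ for all $|a|\le j$, and then chain $(a,b)\,\theta\,(a,d)\,\theta\,(c,d)$. Your explicit remark about upgrading the ``or'' in Lemma \ref{lem:Icont} to ``and'' via $\theta_g=\theta_{g'}$ is exactly the step the paper performs at the outset of its proof.
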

%Note that the result of the theorem can be stated in the following form: the non-singleton classes of $\theta$ are exactly the ones of the form $I_j \times C$, where $C$ runs over the non-singleton classes of $\theta_2$
%(a few borderline cases would have to be excluded for this to be true)
\begin{proof}
We start with some considerations having in mind the initial conditions.
By the dual of  Lemma \ref{lem:single}, we have $\theta_g=\theta _{g'} \subseteq \theta_h$ for all $h\in Q_n$ with $|h| \le k$. By  Lemma \ref{lem:Icont}, we get $\th_{I_j} \subseteq \th_g=\th_{g'}$, hence that $\theta_{I_j}\subseteq \theta_h$ for each such $h$.  Assume w.l.o.g. that
$|f'|=j$. Now $f,f'\in I_j$ and $\theta_{I_j}\subseteq \theta_g$, so $f\theta_g f'$. Thus
$$(f',g') \th (f,g) \th (f',g).$$
Then $(g,g')\in \theta_{f'}$ and therefore $\theta_2 \subseteq \theta_{f'}$, as $\theta_2$ is the congruence generated by $(g,g')$. By Lemma \ref{lem:single}, $\theta_{f'}\subseteq \theta_u$, for all $u\in Q_m$ such that $|u|\le |f'|=j$.        Thus  $\theta_2 \subseteq \theta_{u}$ for all $u \in Q_m$ with $|u| \le j$.

Let $\th'$ be the relation on $Q_m \times Q_n$ defined by  $(a,b) \th' (c,d)$
 if and only if $(a,b)=(c,d)$ or
$|a|, |c| \le j$, $|b|, |d| \le k$, $b \th_2 d$. We want to show that  $\th=\th'$.

Assume  that $|a|, |c| \le j$, $|b|, |d| \le k$ and  $b \th_2 d$. Taking $u=a$ we obtain  $\theta_2\subseteq \theta_a$.  As $b\theta_2 d$, we get $(a,b)\theta (a,d)$.
 Now $|d| \le k$ which, as mentioned at the beginning of the proof, implies that $\th_{I_j}\subseteq\th_d$. It follows that $(a,c)\in \theta_d$, that is $(a,d)\theta(c,d)$. Therefore $(a,b)\th (a,d) \theta(c,d)$, and so
 $\th' \subseteq \th$.

For the reverse inclusion, it suffices to check that $\th'$ is a congruence containing $((f,g),(f',g'))$. We leave this straightforward verification to the reader.
 %Conversely, if $(a,b)\theta (c,d)$, then the conditions follow in a way similar to 1.3 \marginpar{what is 1.3?}
\end{proof}
Notice that we can once again give a more explicit description of $\th$ by incorporating the classification of $\th_2$ given by Theorem \ref{th:congrQn}.
\begin{cor}\label{c:1h}
Let $(f,g),(f',g') \in Q_m \times Q_n$, such that $g\ne g'$, $(g,g')\in \Hg$ and $(f,f')\not\in \Hg$.
Let $j=\mbox{max}\{|f|,|f'|\}$ and $k=|g|=|g'|$. Let $g'=g \cdot \sigma$ for $\sigma \in S_{k}$ with regard to some ordering
 associated with the $\Hg$-class of $g$, and let $N$ be the normal subgroup of $S_k$ generated by $\sigma$.
If $\theta$ is the congruence on $Q_m \times Q_n$ generated by $((f,g), (f',g'))$, then $(a,b) \th (c,d)$
 if and only if one of the following holds:
\begin{enumerate}
  \item $(a,b)=(c,d)$ for $|a| >j$ or $|b|>k$;
  \item $|a|,|c| \le j$, $|b|=k$, $b \Hg d$ and $d=b \cdot \omega$ for some $\omega \in N$, and with regard to some ordering associated with the $\Hg$-class of $d$;
  \item $|a|, |c| \le j$ and $|b|, |d| < k$.
\end{enumerate}
\end{cor}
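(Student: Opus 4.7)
The plan is to deduce this corollary directly from Theorem \ref{th:1h} by unpacking the description of $\theta_2$ via Theorem \ref{th:congrQn}. Since $g \ne g'$ and $(g,g') \in \Hg$ with $g' = g \cdot \sigma$, Theorem \ref{th:congrQn} identifies $\theta_2$ as $\theta(k, N)$, where $k = |g|$ and $N$ is the normal subgroup of $S_k$ generated by $\sigma$. Consequently, for $b, d \in Q_n$, the relation $b\,\theta_2\, d$ expands into three mutually exclusive subcases: (i) $b = d$ with $|b| > k$; (ii) $|b|, |d| < k$; or (iii) $|b| = |d| = k$ with $b \Hg d$ and $b = d \cdot \omega'$ for some $\omega' \in N$.

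Next, I would plug this into the conclusion of Theorem \ref{th:1h}, which asserts that $(a,b)\, \theta\, (c,d)$ if and only if $(a,b) = (c,d)$, or else $|a|, |c| \le j$, $|b|, |d| \le k$, and $b\,\theta_2\, d$. The constraint $|b|, |d| \le k$ immediately rules out subcase (i), and what remains is a disjunction between (ii) and (iii), which translate verbatim into cases (3) and (2) of the corollary. For case (2), the only bookkeeping step is a direction flip: from $b = d \cdot \omega'$ we obtain $d = b \cdot (\omega')^{-1}$, and since $N$ is a subgroup of $S_k$ the inverse lies in $N$, matching the form $d = b \cdot \omega$ with $\omega \in N$ asserted in the statement.

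Finally, I would reconcile the trivial alternative $(a,b) = (c,d)$ with the three stated cases. When $|a| > j$ or $|b| > k$, equality gives precisely case (1). When $|a| \le j$ and $|b| < k$, the pair is absorbed into case (3) (taking $b = d$). When $|a| \le j$ and $|b| = k$, it is absorbed into case (2) with $\omega = \id \in N$, since then $b \Hg d$ and $d = b \cdot \id$ hold trivially. The reverse inclusion is immediate: each of (1), (2), (3) clearly entails either $(a,b) = (c,d)$ or the size-and-$\theta_2$ condition of Theorem \ref{th:1h}. There is no real obstacle; the only subtlety is checking that the trivial-equality alternative is consistently partitioned across cases (1)--(3) along the size strata.
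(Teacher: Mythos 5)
Your proposal is correct and is precisely the argument the paper intends: the corollary is stated immediately after the remark that it follows ``by incorporating the classification of $\th_2$ given by Theorem \ref{th:congrQn}'' into Theorem \ref{th:1h}, and your substitution, the elimination of the $|b|>k$ subcase, the inversion $d=b\cdot(\omega')^{-1}$ using that $N$ is a normal subgroup closed under inverses, and the absorption of the diagonal into cases (2)--(3) are exactly the required bookkeeping.
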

%As before, this construction does not depend on the actual orders attached to the $\Hg$-classes (and once again this independence is inherited from $\th_2$).

We remark that there are obvious dual versions of  Lemma \ref{lm:equal} and Theorem \ref{th:1h} obtained by switching the roles of the coordinates. Apart from the trivial case that $(f,g)=(f',g')$, it remains to determine the principal congruence $\th$ when $f \ne f'$, $g \ne g'$,
$f \mathcal{H}  f'$, $g \mathcal{H}  g'$.

We will first extend the actions $\cdot$ of $S_i$ on $Q_m$ and of $S_j$ on $Q_n$ to a partial action of $S_i \times S_j$ on $Q_m \times Q_n$.
%For each $\Hg$-class $H$ of either $Q_m$ or $Q_n$ consider an arbitrary ordering of the elements in $\im H$.
We  define
the action $\cdot$ of $S_i \times S_j$ on the set
$$D_{i,j}=\{(f,g) \in Q_m \times Q_n \mbox{ such that }|f|=i, |g|=j\}$$ by setting $(f,g)\cdot (\omega_1, \omega_2)= (f \cdot \omega_1, g \cdot \omega_2),$ for all $(f,g) \in D_{i,j}$
and $(\omega_1, \omega_2) \in S_i \times S_j$,
where in the first component $\cdot$ is applied with respect to the ordering of the $\Hg$-class of $f$ within $Q_m$, and correspondingly in the second component.

As $\Hg$-classes of $Q_m \times Q_n$ are products of $\Hg$-classes of $Q_m$ and of $Q_n$, it follows that the action $\cdot$ preserves $\Hg$-classes. In addition, the action  $\cdot$ is
transitive on each $\Hg$-class. If $H_f$ and $H_g$ stand for the $\Hg$-classes of $f$ in $Q_m$ and of $g$ in $Q_n$,
we have  $(f,g) \cdot (\omega_1, \omega_2)=(f\bar{\omega}_1^{H_f}, f\bar{\omega}_2^{H_g})$, where $\bar{\omega}_1^{H_f} \in S_{\im f}$ and $ \bar{\omega}_2^{H_g} \in S_{\im g}$ are as defined  before Theorem \ref{th:congrQn}.
In this context, we will always consider $S_{\im f} \times S_{\im g}$ to be a subgroup of $S_m \times S_n$ in the natural way.

%Below, all instances of $\cdot$ are defined with regard to some arbitrary, but fixed family of associated orderings.

\begin{thm}\label{th:2h}
Let $\th$ be the principal congruence on $Q_m \times Q_n$ generated by $((f,g),(f',g'))$.
If  $f \ne f'$, $g \ne g'$,
$f \mathcal{H}  f'$, $g \mathcal{H}  g'$, $|f|=i =|f'|, |g|=k = |g'|$, let $\sigma_1 \in S_i$ and $\sigma_2 \in S_k$ be such that $f\cdot\sigma_1=f'$ and $g\cdot\sigma_2=g'$.
 Let $N$ be the normal subgroup of $S_i \times S_k$ generated by the pair $(\sigma_1, \sigma_2)$.

 Then  $(a,b) \th (c,d)$
 if and only $(a,b)=(c,d) $ or one of the following hold:
 \begin{enumerate}
 \item $|a|, |c| \le i-1$, $|b|, |d| \le k -1$; \label{casea}
 \item $|a|= |c| =i$, $|b|, |d| \le k-1 $, and $a \theta_1 c$;\label{caseb}
 \item $|a|, |c| \le i-1 $, $|b|, |d| = k$, and $b \theta_2 d$;\label{casec}
 \item $|a|=|c|=i, |b|=|d|=k$, $a \mathcal{H} c$, $b  \mathcal{H} d$ and there exist $(\tau_1, \tau_2) \in N$ such that $a \cdot\tau_1 =c$, $b\cdot \tau_2= d$. \label{cased}
 \end{enumerate}

\end{thm}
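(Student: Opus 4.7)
Let $\theta'$ denote the relation on $Q_m\times Q_n$ given by the four cases (\ref{casea})--(\ref{cased}) of the statement. My plan is to show $\theta = \theta'$ via the two-inclusion scheme: verify that $\theta'$ is a congruence containing $((f,g),(f',g'))$, giving $\theta \subseteq \theta'$, and then derive every $\theta'$-pair from the generator inside $\theta$, giving $\theta' \subseteq \theta$. Membership of the generator in $\theta'$ is immediate from case (\ref{cased}) with $(\tau_1,\tau_2) = (\sigma_1,\sigma_2) \in N$.

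For $\theta \subseteq \theta'$, reflexivity of $\theta'$ uses $(\id,\id) \in N$, symmetry uses $(\tau_1,\tau_2)^{-1} \in N$, and transitivity follows by patching the four cases via group multiplication in $N$ together with the rank comparisons. The delicate point is compatibility: when $(u,v)(a,b)(u',v')$ preserves the ranks $(i,k)$, the new pair is still $\mathcal{H}$-related as required, and the action pair $(\tau_1,\tau_2)$ transforms into a joint conjugate by an element of $S_i \times S_k$ coming from the reordering of images induced by $u, v, u', v'$; normality of $N$ in $S_i \times S_k$ keeps us inside $N$. When multiplication drops ranks, one falls into cases (\ref{casea})--(\ref{casec}), whose closure under such actions follows from $\theta_1, \theta_2$ being congruences on $Q_m, Q_n$.

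For $\theta' \subseteq \theta$, the main work is case (\ref{cased}). Let $e_f, e_g$ be the idempotents of the $\mathcal{H}$-classes $H_f, H_g$; the maps $\omega \mapsto e_f\cdot\omega$ and $\omega \mapsto e_g\cdot\omega$ are group isomorphisms $S_i \to H_f$ and $S_k \to H_g$, so we may write $f = e_f\cdot\phi_f$ and $g = e_g\cdot\phi_g$. Multiplying the generator on the left by $(e_f\cdot x, e_g\cdot x')$ and on the right by $(e_f\cdot y, e_g\cdot y')$, and using the formula $(e\cdot\alpha)(e\cdot\beta) = e\cdot(\alpha\beta)$ valid within each group $\mathcal{H}$-class, produces a $\theta$-pair of the form $(f\cdot\gamma, g\cdot\gamma')\,\theta\,(f\cdot\gamma\tau_1, g\cdot\gamma'\tau_2)$ with $\gamma = \phi_f^{-1}x\phi_f y$, $\gamma' = \phi_g^{-1}x'\phi_g y'$, and $(\tau_1,\tau_2) = (y^{-1}\sigma_1 y,\, (y')^{-1}\sigma_2 y')$. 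For any prescribed $(y,y')$, one can solve $\gamma = \gamma' = \id$ by choosing $x = \phi_f y^{-1}\phi_f^{-1}$ and $x' = \phi_g (y')^{-1}\phi_g^{-1}$, yielding $(f,g)\,\theta\,(f\cdot\tau_1, g\cdot\tau_2)$ for every elementary conjugate pair of $(\sigma_1,\sigma_2)$ in $S_i\times S_k$. The set $W := \{(\tau_1,\tau_2) \in S_i\times S_k : (f,g)\,\theta\,(f\cdot\tau_1, g\cdot\tau_2)\}$ is a subgroup (closure under products and inverses follows by further right-multiplication by elements of $H_f\times H_g$ together with symmetry and transitivity of $\theta$), and contains all these generators, so $W \supseteq N$. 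Passing from $(f,g)$ to arbitrary $(a,b),(c,d)$ with $a\,\mathcal{H}\,c$, $b\,\mathcal{H}\,d$ at ranks $(i,k)$ is done by writing $(a,b) = (u,v)(f,g)(u',v')$ with rank-preserving multipliers; the extra reorderings again conjugate the action pair within $S_i\times S_k$, which stays in $N$ by normality. Cases (\ref{casea})--(\ref{casec}) are handled by multiplying the generator by rank-lowering elements: for (\ref{casea}), producing a non-$\mathcal{H}$-related $\theta$-pair in $I_{i-1}\times I_{k-1}$ and applying Theorem \ref{th:0h} forces the Rees congruence on $I_{i-1}\times I_{k-1}$, collapsing this set to one $\theta$-class; cases (\ref{caseb}) and (\ref{casec}) follow analogously by lowering rank on a single coordinate and appealing to Theorem \ref{th:1h} with the structure of $\theta_1$ and $\theta_2$ at maximal rank. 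The principal difficulty of the proof lies in case (\ref{cased}): the computations must be arranged so that the derived pairs land in the jointly normally generated $N$, rather than in the potentially larger $N_1 \times N_2$; this comes out correctly precisely because the reorderings appear as simultaneous (not coordinate-wise independent) conjugations in $S_i \times S_k$.
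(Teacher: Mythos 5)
Your overall strategy coincides with the paper's: reduce cases (\ref{casea})--(\ref{casec}) to Theorems \ref{th:0h} and \ref{th:1h} by multiplying the generator by rank-lowering elements, treat case (\ref{cased}) through the group structure of an $\Hg$-class isomorphic to $S_i\times S_k$ together with the normal closure of $(\sigma_1,\sigma_2)$, and finish with the routine verification that the four conditions define a congruence containing the generator. There is, however, one step in your case (\ref{cased}) that fails as written: you take ``the idempotents $e_f,e_g$ of the $\Hg$-classes $H_f,H_g$'', but an arbitrary $\Hg$-class of $Q_m$ need not contain an idempotent (for instance $f\in\T_3$ with $1f=2f=1$, $3f=2$ lies in a non-group $\Hg$-class, since $\im f=\{1,2\}$ is not a transversal of $\ker f$). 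Your whole computation --- the isomorphism $\omega\mapsto e_f\cdot\omega$, the identity $(e\cdot\alpha)(e\cdot\beta)=e\cdot(\alpha\beta)$, and the closure of $W$ under products and inverses --- depends on $H_f\times H_g$ being a group; without that, the products $(e_f\cdot x)f(e_f\cdot y)$ can even drop rank. The paper repairs this by first multiplying the generator on the left by a unit $(u,v)\in S_m\times S_n$ chosen so that $uf$ and $vg$ are idempotent; this changes neither the generated congruence nor the associated pair $(\sigma_1,\sigma_2)$, and only then is the group-$\Hg$-class argument run. You need to insert this reduction.

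A second, smaller omission: you assert that one can ``produce a non-$\Hg$-related $\theta$-pair'' by rank-lowering, but constructing an $h$ with $|fh|=|f'h|=i-1$ and $(fh,f'h)\notin\Hg$ is exactly where the paper has to work: it gives separate constructions for $Q\in\{\T,\PT\}$ with $i\ge3$ (merging kernel classes so the resulting kernels differ), for $i=2$ (left-multiplying to reach rank $1$ with distinct images), and for $Q=\In$ (deleting a point of the common image so that the resulting domains differ). Your sketch takes this existence for granted. Everything else --- the conjugation computation producing $(y^{-1}\sigma_1 y,(y')^{-1}\sigma_2 y')$, the subgroup $W\supseteq N$, and the transfer to arbitrary $(a,b),(c,d)$ with $a\,\Hg\,c$, $b\,\Hg\,d$ via rank-preserving multipliers and normality --- is the paper's argument in only slightly different clothing, and your closing remark that it is the conjugates of the \emph{pair} $(\sigma_1,\sigma_2)$ that generate $N$, rather than $N_1\times N_2$, is precisely the right point.
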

\begin{proof}
First note that if $(a,b) \th (c,d)$, $(a,b) \ne (c,d)$, and $\th$ is generated by $((f,g),(f',g'))$, we must have $|a|\le |f|=i$, $|b|\le |g|=k$, $|c|\le |f'|=i$, $|d|\le |g'|=k$

As $f \mathcal{H} f'$, then $f$ and $f'$ have the same kernel and  image. Together with $f\ne f'$, this implies $|f|=i \ge 2$. Similarly $k\ge 2$. Let $\ker f= \ker f'=\{K_1, K_2, \dots, K_i\}$.

First assume that $Q \in \{\mathcal{T}, \mathcal{PT}\}$.

Suppose that $i \ge 3$.
As $f \ne f'$ there exists $x \in \im f=\im f'$  such that the associated kernel classes are different, i.e. $f^{-1}(x) \ne f'^{-1}(x)$.
W.l.o.g. assume $K_1= f^{-1}(x)$ and $K_2= f'^{-1}(x)$. Let $\{y\}=K_3f$ so that $f^{-1}(y)=K_3$.

 Let $h \in T_n$ be such that $yh=x$ and it is identical otherwise. Then $(fh, g)\, \th\, (f'h,g')$, where $|fh|=|f'h|=i-1$ and $fh$ and $f'h$ have different kernel, since the preimages of $x$ are
 $K_1 \dot\cup  K_3$ and $K_2 \dot\cup K_3$, respectively.

 %Suppose that $k\ge 3$. As for $f$ and $f'$, choose $h' \in \mathcal{T}_n$ such that $|gh'|=|g'h'|=k-1$ and $gh' \not\mathcal{H} g'h'$.

 Let $\beta $ be the congruence  generated by the pair $((fh, g),(f'h,g'))$. As $(fh,f'h) \notin \Hg$, $g \ne g'$, $g \Hg g'$,
 Theorem \ref{th:1h} is applicable to $\beta$. Thus $(a,b) \beta (c,d)$ for $(a,b) \ne (c,d)$ if and only if
 $|a|,|c| \le i-1$, $|b|,|d| \le k$, and $b \th_{2,\beta} d$, where  $\th_{2, \beta}$ is the $Q_n$-congruence generated by $(g,g')$ and hence is equal to $\th_2$.
 By Theorem \ref{th:congrQn}, the relation $\th_{2}$ restricted to $I_{k-1}$ is the universal relation.
  Therefore the pairs $((a,b),(c,d))$ that  satisfy condition (\ref{casea}) or (\ref{casec}) are in $\beta$, but  $\beta$'s generating pair is in $ \th$, and so they are in $\th$, as well.

Suppose now that $i=2$. Let $(K_1)f=x_1$ and $ (K_2)f= x_2$. Then $ (K_1)f'=x_2,$ and $(K_2)f'= x_1$. Let $h$ be a total map with image contained in $K_1$. Then
$(hf,g) \th (hf', g')$ where $|hf|=|hf'|=1$, but $\im  hf \ne \im hf'$. Thus $(hf, hf') \notin \Hg$, and by Theorem \ref{th:1h}, as before, we conclude that $\th $ must contain all pairs satisfying conditions
(\ref{casea}) or~(\ref{casec}).

Next suppose that $Q= \mathcal{I}$. We have $i\ge2$. In this case, there exists $x \in \im f= \im f'$ such that $f^{-1}(x) \ne f'^{-1}(x)$ (notice that these sets are now singletons).
Let $h \in \mathcal{I}_m$ be the identity map with domain $\{1, \dots,m\}\setminus \{x\}$.
Then $(fh,g)\th(f'h,g')$ where $|fh|=|f'h|=i-1$ and $\dom fh \ne \dom fh'$. Once again applying Theorem \ref{th:1h}, we conclude that $\theta$ must contain
all pairs satisfying conditions (\ref{casea}) or (\ref{casec}).

By symmetrically applying the above considerations to the  second argument,
we also show that $\theta$ contains the pairs that satisfy condition~(\ref{caseb}).

The next step is to prove that, for any $Q \in\{\mathcal{T}, \mathcal{PT}, \mathcal{I}\}$ the pairs that satisfy condition (\ref{cased}) are also in $\th$.

Note that the group of units of $Q_m \times Q_n$ is $S_m \times S_n$. We can choose $(u, v) \in S_m \times S_n$ such that
both $uf$ and $vg$ are idempotent transformations. Now $(uf,vg)= (u,v)(f,g) \th (u,v)(f',g')=(uf',vg')$, and clearly $(uf)\cdot \sigma_1= uf'$ and  $(vg)\cdot \sigma_2= vg'$.
 Hence we may assume w.l.o.g. that $f$, $g$ are idempotents.

Let $H$ be the $\Hg$-class of $(f,g)$. Then $(f',g') \in H$. As $H$ contains an idempotent, $H$ is a group.
Moreover, it is easy to see that $\phi$ given by $(\omega_1, \omega_2) \phi= (f \cdot \omega_1, g\cdot \omega_2)$ is an isomorphism from $S_i \times S_k$  to $H$.

Let $\th'$ be the restriction of $\th$ to $H$, then $\th'$ is a congruence on a group. Let $K'$ be the  normal subgroup of $H$ corresponding to $\th'$, and $K'=\bar K\phi^{-1}$.
As an idempotent, $(f,g)$ is the identity of $H$, so $(f,g)\in K'$ and hence $(f',g')\in K'$, as $(f,g) \th' (f',g')$. Applying $\phi^{-1}$, we get that
$(\sigma_1, \sigma_2) \in K$. As $K$ is a normal subgroup of $S_i \times S_k$, we obtain $N \subseteq K$.

Now let us take  a pair $((a,b), (c,d))$ that satisfies (\ref{cased}), that is, $|a|=|c|=i, |b|=|d|=k$, $a \mathcal{H} c$, $b  \mathcal{H} d$ such that $a \cdot\tau_1 =c$, $b\cdot \tau_2= d$ for some $(\tau_1, \tau_2) \in N$.
 It remains to show that $(a,b) \th (c,d)$.

 As $|a|=|f|$ and $|b|=|g|$, we have $a \mathcal{J} f$, and
$b \mathcal{J} g$, so there exists $h_1,h_2 \in Q_m$ and $h_3,h_4 \in Q_n$ such that
$a=h_1 f h_2 $, $b=h_3 g h_4 $. Once again as $|a|=|f|$ and $|b|=|g|$, $h_2 |_{\im f}$ is an injection and so is $h_4|_{\im f}$. Hence w.l.o.g. we may assume that $h_2 \in S_m$, $h_4 \in S_n$.

Note that as $(\tau_1, \tau_2) \in N$, we have $(\tau_1, \tau_2)\in K$, since $N \subseteq K$.
Recall that $\bar \tau_1^{\pi_1 (H)} \in S_{\im \pi_1(H)}$ denotes the function such that $h  \bar \tau_1^{\pi_1 (H)}= h \cdot \tau_s$ for all $h \in \pi_1(H)$. We will write $\bar \tau_1$ for the extension of
$ \bar \tau_s^{\pi_1 (H)}$ to $S_m$ that is the identity on $\{1,\dots,m\} \setminus \im H$, and use corresponding notation if $\tau_1$ is replaced by other elements of $S_i$ or $S_k$.

Consider
$y=(f h_2 \bar \tau_1 h_2^{-1}, g h_4 \bar \tau_2 h_4^{-1})$. It is straightforward to check that
$y \in H$, whence $y=(f \cdot \omega_1, g \cdot \omega_2)$ for some $(\omega_1, \omega_2) \in S_i \times S_k$.
In fact, $\bar \omega_1= h_2 \bar \tau_1 h_2^{-1}$, as these elements agree on $\im f$
and are the identity otherwise. Hence   $\bar \omega_1$ and $\bar \tau_1$ are conjugate in $S_m$ and so have the same cycle structure. The cycle structure of $\bar \omega_1$ is obtained from $ \omega_1$
 by the addition of $m-i$ trivial cycles. The same holds for $\bar \tau_1$ and $\tau_1$. It follows that
$\tau_1$ and $\omega_1$ are conjugates in $S_i$. Analogously, $\bar \omega_2= h_4 \bar \tau_2 h_4^{-1}$, and
 $\tau_2$ and $\omega_2$ are conjugates in $S_k$. Therefore, $(\omega_1,\omega_2)$ is a conjugate of $(\tau_1,\tau_2)$ in $S_i \times S_k$,
and hence $(\omega_1,\omega_2) \in K$.

We obtain that $(f\cdot \omega_1, f \cdot \omega_2) \in K'$, and hence $(f,g)\th' (f\cdot \omega_1, f \cdot \omega_2)$, and so $(f,g)\th (f\cdot \omega_1, f \cdot \omega_2)$.
Now $f \cdot \omega_1=f\bar \omega_1$, $g \cdot \omega_2=g\bar \omega_2$ and so
\begin{eqnarray*}
(a,b)&=&(h_1 f h_2, h_3 g h_4)\\
&\th& (h_1 f\bar \omega_1 h_2, h_3 g\bar \omega_2 h_4)\\
&=&(h_1 f (h_2\bar \tau_1 h_2^{-1})h_2, h_3 g (h_4 \bar \tau_2 h_4^{-1}) h_4)\\
&=&(h_1 f h_2 \bar \tau_1, h_3 g h_4\bar \tau_2) \\
&=&(a\bar \tau_1, b\bar \tau_2)\\
&=&(a \cdot \tau_1, b \cdot \tau_2)\\
&=&(c,d)
\end{eqnarray*}
as required.

It follows that all pairs that satisfy  one of the conditions (1) to (4) are in $\th$.

Conversely, let $\rho$ be defined on $Q_m \times Q_n$ by, for all $(a,b),(c,d) \in Q_m \times Q_n$,
$$ (a,b) \rho (c,d) \mbox{ iff }\left\{
                                  \begin{array}{ll}
                                    (a,b)=(c,d), & \hbox{or} \\
                                    (a,b) \ne (c,d), & \hbox{and one of (1) to (4) holds.}
                                  \end{array}
                                \right.$$
We can routinely verify that $\rho$ is a congruence. As $((f,g),(f',g')) \in \rho$, $\th \subseteq \rho$. We have shown that $\rho \subseteq \th$, therefore $\th=\rho$, as required.

\end{proof}
We can get a more direct description of the congruence classes by using the following
group theoretic result. Its proof is in the appendix.
\begin{thm}
All normal subgroups of $S_i \times S_k$ are either products of normal subgroups of $S_i$ and $S_k$ or the group of all pairs $(\sigma_1, \sigma_2)$ where $\sigma_1$ and $\sigma_2$ are permutations
with the same signature.
\end{thm}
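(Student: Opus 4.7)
The plan is to apply Goursat's lemma in the form tailored to normal subgroups of a direct product, and then combine it with the classification of normal subgroups of $S_n$.

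First I would set up the standard Goursat data. Given $N \trianglelefteq S_i \times S_k$, let
\[
N_1 = \{\sigma \in S_i : (\sigma, 1) \in N\}, \qquad N_2 = \{\tau \in S_k : (1, \tau) \in N\},
\]
and let $H_j = \pi_j(N)$ for $j = 1,2$, where $\pi_j$ denotes the projection to the $j$-th coordinate. Normality of $N$ in the whole product ensures that $N_1, H_1 \trianglelefteq S_i$ and $N_2, H_2 \trianglelefteq S_k$, and clearly $N_1 \times N_2 \subseteq N \subseteq H_1 \times H_2$. If $N = N_1 \times N_2$ we are in the product case and we are done; otherwise a short diagram chase shows that $\bar N := N/(N_1 \times N_2)$ is the graph of an isomorphism $\phi \colon H_1/N_1 \to H_2/N_2$ (well-definedness coming from the observation that if $(\sigma,\tau),(\sigma,\tau') \in N$ then $(1, \tau\tau'^{-1}) \in N$, hence $\tau N_2 = \tau' N_2$).

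The key step, which exploits the normality of $N$ in the whole product (and not merely its being a subdirect product), comes next. Given $(\sigma, \tau) \in N$ and any $s \in S_i$, conjugation by $(s,1)$ yields $(s\sigma s^{-1}, \tau) \in N$, and since $\bar N$ is the graph of $\phi$ this forces $s\sigma s^{-1} N_1 = \sigma N_1$ for every $s \in S_i$ and every $\sigma \in H_1$. Equivalently, $H_1/N_1 \le Z(S_i/N_1)$, and symmetrically $H_2/N_2 \le Z(S_k/N_2)$. This centrality constraint on the Goursat isomorphism is the main obstacle: one has to translate full normality into a statement about the centre of a quotient, not merely about compatibility of the two coordinate-wise normal structures.

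Finally I would enumerate: for which normal $M \trianglelefteq S_n$ is $Z(S_n/M)$ non-trivial? Using the classical classification (normal subgroups of $S_n$ are $\{1\}, A_n, S_n$ for $n \ne 4$, together with $V_4$ for $n = 4$), one checks case by case that the only $M$ producing a non-trivial central quotient is $M = A_n$, yielding $S_n/A_n \cong C_2$ (valid for every $n \ge 2$); the remaining non-trivial quotients, including $S_4/V_4 \cong S_3$ and $S_n/\{1\} = S_n$ for $n \ge 3$, all have trivial centre. Since $\phi$ is an isomorphism between two non-trivial central quotients, both must be $C_2$, forcing $N_1 = A_i$, $H_1 = S_i$, $N_2 = A_k$, $H_2 = S_k$, with $\phi$ being the unique isomorphism $C_2 \to C_2$. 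Unwinding the definition gives $N = \{(\sigma, \tau) \in S_i \times S_k : \mathrm{sgn}(\sigma) = \mathrm{sgn}(\tau)\}$, exactly as claimed.
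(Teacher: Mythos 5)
Your proof is correct, but it is organized quite differently from the paper's, so a comparison is worthwhile. The paper also starts from $N_1=\{x:(x,\id)\in N\}$ and $N_2=\{y:(\id,y)\in N\}$ and, in the non-product case, fixes a single witness $(x,y)\in N\setminus N_1\times N_2$; it then eliminates the possibilities $N_1\in\{S_i,\varepsilon_i,V_4\}$ one at a time by ad hoc arguments --- for $N_1=\varepsilon_i$ it observes that $x$ would have to be central in $S_i$, and for $N_1=V_4$ it counts conjugacy-class sizes in $S_4$ ($6$, $8$, $6$) to show they cannot fit into a single coset of $V_4\times\varepsilon_k$ --- before concluding $N_1=A_i$, $N_2=A_k$ and assembling $N$ explicitly. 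Your route packages all of this uniformly: Goursat's lemma gives the graph isomorphism $\phi\colon H_1/N_1\to H_2/N_2$, and the single conjugation computation $(s\sigma s^{-1}\sigma^{-1},\id)\in N$ yields $H_1/N_1\le Z(S_i/N_1)$, after which the whole case analysis reduces to the one-line observation that $A_n$ is the only normal subgroup of $S_n$ with a non-trivially-centred quotient (in particular $Z(S_4/V_4)\cong Z(S_3)$ is trivial, which replaces the paper's counting argument). The paper's proof is more elementary and self-contained; yours is more conceptual, makes the role of full normality (as opposed to subdirectness) transparent, and would transfer verbatim to any direct product of groups whose normal subgroups and quotient centres are known. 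One small point you should make explicit: in the non-product case $H_1/N_1$ is genuinely non-trivial (if $H_1=N_1$ then the graph property forces $N\subseteq N_1\times N_2$), which is what licenses the final step; this is easy but is currently only implicit in your phrase ``two non-trivial central quotients''.
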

%I CAN SHOW THIS IN A MESSY WAY. I ASSUMED THIS TO BE KNOWN, AND EMAILED PETER CAMERON ABOUT IT, BUT APPARENTLY THERE IS NO SUCH PUBLISHED RESULT. WE COULD INCLUDE IT IN AN APPENDIX, BUT I WILL WAIT WITH WRITING IT OUT TO SEE IF THERE ARE SOME GENERAL RESULTS IMPLYING IT IN AN EASIER WAY. EXCEPT FOR THE NEXT COROLLARY, ALL RESULT ARE STATED WITH RESPECT TO ARBITRARY NORMAL SUBGROUPS OF THE PRODUCT, AND DO NOT REQUIRE THIS RESULT.

\begin{cor}
Under the conditions of  Theorem \ref{th:2h}, if one of $\sigma_1$ or $\sigma_2 $ is an even permutation,
then $\theta$ agrees on $(I_i \times I_j)^2$ with the product congruence $\theta_1 \times \theta_2$, and
is trivial elsewhere. Concretely, in this case, let $N_1, N_2$ be the normal subgroups of $S_i$ and $S_k$ generated by $\sigma_1$, $\sigma_2 $, respectively. Then for $(a,b) \ne (c,d)$, 

$(a,b) \th (c,d)$
 if and only if one of the following holds:
 \begin{enumerate}
 \item $|a|, |c| \le i-1$, $|b|, |d| \le k-1 $;
\item $|a|, |c| \le i-1$, $|b| =|d|=k$, $b \mathcal{H} d$, $d=b \cdot \tau_2$ for some $\tau_2 \in N_2$;
 \item $|a|=|c|= i$, $|b| ,|d| \le k-1$, $a \mathcal{H} c$, $a=c \cdot \tau_1$ for some $\tau_1 \in N_1$;
 \item $|a|=|c|=i, |b|=|d|=k$, $a \mathcal{H} c$, $b  \mathcal{H} d$,  $a=c \cdot \tau_1$ for some $\tau_1 \in N_1$, and $d=b \cdot \tau_2$ for some $\tau_2 \in N_2$.
\end{enumerate}

Under the conditions of Theorem \ref{th:2h},  if both $\sigma_1, \sigma_2$ are odd permutations, then for $(a,b) \ne (c,d)$, 

$(a,b) \th (c,d)$
 if and only if one of the following holds:
 \begin{enumerate}
 \item $|a|, |c| \le i-1$, $|b|, |d| \le k-1 $;
\item $|a|, |c| \le i-1$, $|b| =|d|=k$, $b \mathcal{H} d$;
 \item $|a|=|c|= i$, $|b| ,|d| \le k-1$, $a \mathcal{H} c$;
 \item $|a|=|c|=i, |b|=|d|=k$, $a \mathcal{H} c$, $b  \mathcal{H} d$ and there exist
$\tau_1 \in S_i, \tau_2 \in S_k $ of the same signature such
 that $a \cdot \tau_1 =c$, $b \cdot \tau_2= d$.
\end{enumerate}

\end{cor}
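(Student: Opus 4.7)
The plan is to derive both parts of the corollary directly from Theorem~\ref{th:2h} combined with the preceding group-theoretic classification of normal subgroups of $S_i\times S_k$, by pinning down which shape the normal subgroup $N$ of $S_i\times S_k$ generated by $(\sigma_1,\sigma_2)$ takes in each case. Since $(f,f'),(g,g')\in\Hg$, Theorem~\ref{th:congrQn} gives $\th_1=\th(i,N_1)$ and $\th_2=\th(k,N_2)$, so once $N$ is identified, translating conditions (\ref{casea})--(\ref{cased}) into the clauses stated in the corollary is mechanical bookkeeping.

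Suppose first that one of $\sigma_1,\sigma_2$ is even. Conjugation in $S_i\times S_k$ preserves the signature of each coordinate, so every element of $N$ has the corresponding entry even, and hence $N$ cannot contain the odd--odd pairs that populate the signature subgroup (which are present because $\sigma_1,\sigma_2\ne\id$ forces $i,k\ge 2$). By the group-theoretic theorem $N$ is therefore a product $N_1'\times N_2'$ of normal subgroups of $S_i,S_k$. Two easy inclusions pin these down: $\sigma_1\in N_1'$ yields $N_1\subseteq N_1'$, while the normality of $N_1\times N_2$ together with $(\sigma_1,\sigma_2)\in N_1\times N_2$ gives $N\subseteq N_1\times N_2$, and symmetrically for the second factor. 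With $N=N_1\times N_2$, condition~(\ref{cased}) of Theorem~\ref{th:2h} decouples into independent existence statements $a\cdot\tau_1=c$ with $\tau_1\in N_1$ and $b\cdot\tau_2=d$ with $\tau_2\in N_2$, which is clause~(4) of the first part; clauses~(1)--(3) follow from rewriting (\ref{casea})--(\ref{casec}) using $\th_1=\th(i,N_1)$ and $\th_2=\th(k,N_2)$.

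For the second part, assume both $\sigma_1,\sigma_2$ are odd. Then $(\sigma_1,\sigma_2)$ has matching signatures and lies in the signature subgroup, forcing $N$ to be contained in the signature subgroup as well. If $N$ were a product $N_1'\times N_2'$, then $(\sigma_1,e)$ would also lie in $N$ (as $e\in N_2'$), contradicting the fact that $(\sigma_1,e)$ has mixed parity. Hence by the group-theoretic theorem $N$ is exactly the signature subgroup, so condition~(\ref{cased}) unpacks to the existence of $\tau_1\in S_i,\tau_2\in S_k$ of equal signature realising the two actions, which is clause~(4) of the second part. For clauses~(2), (3), note that the normal closure of any odd permutation in $S_\ell$ is the whole of $S_\ell$ (a direct case-check on the normal-subgroup lattice of $S_\ell$ for $\ell\le 4$, and immediate from $\{e\},A_\ell,S_\ell$ being the only normal subgroups for $\ell\ge 5$); consequently $N_1=S_i$ and $N_2=S_k$, so $a\th_1 c$ with $|a|=|c|=i$ collapses to the bare statement $a\Hg c$ (since $S_i$ acts transitively on each $\Hg$-class of rank~$i$), with the symmetric simplification in the second coordinate.

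I do not anticipate any serious obstacle: everything beyond the group-theoretic input supplied as a cited theorem is routine translation. The only mild points to watch are the degenerate ranks $i=1$ or $k=1$, which are excluded automatically because $\sigma_1,\sigma_2\ne\id$, and the small computation that the normal closure of an odd permutation is the full symmetric group.
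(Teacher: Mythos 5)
Your derivation is correct and is exactly the intended route: the paper states this corollary without proof, as an immediate consequence of Theorem \ref{th:2h} together with the appendix classification of normal subgroups of $S_i\times S_k$, and your parity argument identifying $N$ as $N_1\times N_2$ in the first case and as the signature subgroup in the second (using that the normal closure of an odd permutation is all of $S_\ell$, and that $\cdot$ acts transitively on $\Hg$-classes) is precisely the verification the authors leave implicit.
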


\section{The structure of all congruences on $Q_m \times Q_n$}\label{s:allcong}
We now look at our main aim: to determine the structure of all congruences on $Q_m \times Q_n$.
When studying a congruence $\th$ on $Q_m \times Q_n$, as in the case of $Q_n$ (Theorem \ref{th:congrQn}), we realize that the $\th$-classes are intrinsically related to the $\Dg$-classes of $Q_m \times Q_n$.
We shall show that $\th$ is determined by some minimal blocks of $\th$-classes, called here $\th$-dlocks, which are also unions of $\Dg$-classes. The strategy will be to determine the possible types of $\th$-dlocks and
to describe $\th$ within such blocks.

Throughout this section, $\th$ denotes a congruence on $Q_m \times Q_n$.
To avoid some minor technicalities, we assume that   $Q_m$ and $ Q_n$ are non-trivial, i.e.
we exclude the factors $\T_0,\mathcal{T}_1$, $\mathcal{PT}_0$, $\mathcal{I}_0$.

\begin{df}
A $\th$-\emph{dlock} $X$ is a non-empty subset of $Q_m \times Q_n$ such that
\begin{enumerate}
  \item $X$ is a union of $\th$-classes as well as  a union of $\Dg$-classes;\label{n:dlock}
  \item No proper non-empty subset of $X$ satisfies (\ref{n:dlock}).
\end{enumerate}
\end{df}

In other words, the $\th$-dlocks are the classes of the equivalence relation generated by $\Dg \cup \th$. We will just write \emph{dlock} if $\th$ is understood by context, and will describe dlocks by
listing  their $\Dg$-classes. Concretely, let $D_{i,j}$ be the $\Dg$-class of all pairs $(f,g)$ such that $|f|=i, |g|=j$. For
$P \subseteq \{0, \dots, m\} \times \{0, \dots, n\}$, we
set $D_P= \cup_{(i,j) \in P}D_{i,j}$.

 First we describe the various configurations of dlocks with respect to the $\Hg$-classes they contain. To this end,
we will divide the dlocks into $9$ different types.

\begin{df}Let $X$ be a $\th$-dlock. We say that $X$ has first component type
\begin{description}
  \item[$\varepsilon$] if for all  $(a,b),(c,d) \in X$ with $(a,b)\, \th \, (c,d)$ we have $a=c$;
  \item[$\Hg$] if there exist  $(a,b),(c,d) \in X$ such that  $a \ne c$ and $(a,c)\,\th\,(b,d)$, and for all $ (a',b'), (c',d') \in X$ such that $(a',b') \,\th \,(c',d')$ we have $a' \Hg c'$;
  \item[$F$]if there exist  $(a,b),(c,d) \in X$ such that  $(a, c) \notin \Hg$ and $(a,c)\,\th\,$ $(b,d)$.
\end{description}
We define the second component type of $X$ dually. Finally we say that $X$ has type $VW$ for $V, W \in \{\varepsilon, \Hg, F\}$ if it has first component type $V$ and second component type $W$.
\end{df}
Clearly, we obtain all possible types of dlocks.
Next, we will describe the congruence $\th$ by means of its  restriction to each of its dlocks.

\begin{lem} \label{lm:t1} Let $X=D_P=\cup_{(i,j) \in P}D_{i,j}$ be a $\th$-dlock of type $FF$.  Then
$P$ is a downward-closed subset of
$(\{0, \dots, m\} \times \{0, \dots, n\}, \le \times \le)$, and $X$ is a single
$\th$-class that is an ideal of $Q_m \times Q_n$. Conversely, let $\th$ have an ideal class $I$,
 and let $P\subseteq \{0, \dots, m\} \times \{0, \dots, n\}$ be the set of pairs $(i,j)$ for which $D_{i,j} \cap I \ne \emptyset$. Then $I$ is a dlock of Type $FF$
unless one of the following holds:
\begin{enumerate}
\item$P=\{0\} \times \{0,\dots,j\}$ for some $j\in \{0, \dots, n\}$;
\item $P=\{0,\dots, i\}\times \{0\}$ for some $i \in \{0, \dots, m\}$.
%\item $Q_m =\mathcal{T}_1$ and $P=\{1\} \times \{1,\dots,j\}$ for some $j$
%\item $Q_n =\mathcal{T}_1$ and $P=\{1,\dots, i\}\times \{1\}$ for some $i$
\end{enumerate}
\end{lem}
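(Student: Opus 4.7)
For the forward direction, start by observing that each of the type-$F$ conditions forces the relevant projection of $\th$ to be non-trivial: the type-$F$ witnesses $(a_1,b_1)\th(c_1,d_1)$ in the first component have distinct first coordinates (since $(a_1,c_1)\notin\Hg$), so $\pi_1(\th)$ is non-trivial, and symmetrically $\pi_2(\th)$ is non-trivial. By Lemma \ref{lem:ideals}, $\th$ therefore has a (unique) ideal class $I_\th$. Since $I_\th$ is simultaneously a single $\th$-class and, as an ideal, a union of $\Dg$-classes (Lemma \ref{lem:ideals1}), it is itself a $\th$-dlock. Dlocks partition $Q_m\times Q_n$, so to conclude $X=I_\th$ it suffices to produce one pair of $X$ lying in $I_\th$; the downward-closedness of $P$ then follows from the description of ideals in Lemma \ref{lem:ideals1}.

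To produce such a pair I would apply Lemma \ref{lem:Icont} to the two $F$-witnesses. From the first, there is $b^*\in\{b_1,d_1\}$ with $\th_{b^*}\supseteq\th_{I_{k_1}^{(m)}}$ for $k_1=\max\{|a_1|,|c_1|\}$, which in particular places $(z_m,b^*)$ in the $\th$-class of $(a_1,b_1)$ and hence in $X$. Dually, from the second $F$-witness we obtain $a^*$ with $\th_{a^*}\supseteq\th_{I_{l_2}^{(n)}}$ and $(a^*,z_n)\in X$. Applying Lemma \ref{lem:single} and its dual to propagate these induced congruences down to the minimum-rank element in the other coordinate gives $\th_{z_n}\supseteq\th_{I_{k_1}^{(m)}}$ and $\th_{z_m}\supseteq\th_{I_{l_2}^{(n)}}$, whence $\{(x,z_n):|x|\le k_1\}\cup\{(z_m,y):|y|\le l_2\}\subseteq I_\th$. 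When the ranks line up (e.g.\ $|a^*|\le k_1$), $(a^*,z_n)\in X\cap I_\th$ and we are done. The residual case where $|a^*|>k_1$ and $|b^*|>l_2$ requires more: the $\Dg$-connectivity of $X$ is used to merge the two $F$-witnesses into a single $\th$-class, and then a ``rook'' argument on the first-$\Hg$ and second-$\Hg$ partitions of that class yields a pair $((a,b),(c,d))\in\th$ with both $(a,c)\notin\Hg$ and $(b,d)\notin\Hg$; Theorem \ref{th:0h} then identifies the corresponding class as a Rees ideal class, placing it inside $I_\th$.

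For the converse, let $I$ be an ideal class with index set $P$. By Lemma \ref{lem:ideals1} the set $P$ is automatically downward-closed, and $I$ is a dlock as above. To compute its type, notice that type $F$ first fails precisely when every two elements of $I$ have $\Hg$-related first coordinates; elements in $\Dg$-classes of different first-coordinate rank are in different first-$\Hg$-classes, so this failure forces $P$ to have a single first-coordinate value, and downward-closure then forces that value to be $0$---exactly case~(1). Case~(2) is symmetric. Outside these exceptions $P$ has at least two values in each coordinate, so both type-$F$ conditions hold and $I$ has type $FF$.

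The main obstacle is the residual case of the forward argument, in which the two $F$-witnesses live in different $\th$-classes within $X$ and the naive rank comparison fails to place an element of $X$ directly inside $I_\th$. This is where the $\Dg$-connectivity of the dlock must be exploited in a non-trivial way---transporting the first-coordinate non-$\Hg$ behaviour and the second-coordinate non-$\Hg$ behaviour into a common $\th$-class where the rook argument, combined with Theorem \ref{th:0h} (or Theorem \ref{th:1h} and Lemma \ref{lm:equal} for the degenerate sub-cases), takes over. I expect the bulk of the technical work to lie here.
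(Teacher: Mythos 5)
Your setup and your converse direction are sound and broadly in the spirit of the paper's argument: you correctly identify the ideal class $I_\th$ as a dlock, reduce the forward direction to exhibiting one element of $X$ inside $I_\th$, and your treatment of the exceptional cases in the converse is right. The problem is exactly where you locate it, but your plan for the residual case does not close the gap. The step ``the $\Dg$-connectivity of $X$ is used to merge the two $F$-witnesses into a single $\th$-class'' has no force: $\Dg$-connectivity of a dlock never merges $\th$-classes, and the definition of type $FF$ only supplies two \emph{possibly distinct} $\th$-classes in $X$, one whose first projection leaves an $\Hg$-class and one whose second projection does. That these coincide --- indeed that $X$ is a single $\th$-class --- is precisely what is being proved, so the rook argument has no single class to act on; as written the plan is circular. (The rook combinatorics itself is fine \emph{once} you have one class meeting at least two $\Hg$-classes in each projection, but producing that class is the whole difficulty.)

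Two ingredients are needed to finish. First, instead of extracting only $\th_{b^*}\supseteq\th_{I_{k_1}}$ from Lemma \ref{lem:Icont}, use the full descriptions of the principal congruences in Theorems \ref{th:0h}, \ref{th:1h} and Lemma \ref{lm:equal} (this is what the paper does): they show that the whole set $I_{k_1}\times\{b_1\}$ lies in a single $\th$-class, so the witness $(a_1,b_1)$ is itself $\th$-related to $(z_m,b_1)$, and dually for the second witness; together with the gluing at rank $(0*,0*)$ this yields the ideal class $Y\supseteq D_{0*,0*}$. Second, one must still rule out your residual configuration, where $(a^*,z_n)\in X$ and $(z_m,b^*)\in X$ but, writing $\th_{z_n}=\th(\kappa,\cdot)$ and $\th_{z_m}=\th(\lambda,\cdot)$ via Theorem \ref{th:congrQn}, one has $|a^*|\ge\kappa$ and $|b^*|\ge\lambda$. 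This configuration is in fact impossible: by Lemma \ref{lem:Icont} and Lemma \ref{lem:single}, any $\th$-pair in $X$ whose first coordinates have different ranks forces $\th_{z_n}$ to collapse $I_r$ for $r$ the larger of those ranks, hence $r\le\kappa-1<|a^*|$; so no $\th$-step inside $X$ can ever change the first-coordinate rank away from $|a^*|$, and the $\Dg$-classes $D_{|a^*|,0*}$ and $D_{0*,|b^*|}$ of $X$ cannot be joined by a $(\Dg\cup\th)$-path, contradicting the definition of a dlock. Without an argument of this kind the forward direction is not established.
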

\begin{proof}
Let $X$ be a $\th$-dlock of type $FF$ and assume that its first component type $F$ is witnessed by $((f,g),(f',g'))$, i.e.
$((f,g),(f',g')) \in \th \cap X^2$ and
 $(f,f') \not\in \Hg$.
 %$(g,g') \not\in \Hg$.

 Let $\th'$ be the congruence generated by
$((f,g),(f',g'))$. Let $i=\max\{|f|,|f'|\}$ and $k = \max\{|g|,|g'|\}$. Consider the ideals $I_i \subseteq  Q_m$, $I_k \subseteq Q_n$.
 By either Theorem \ref{th:0h} (if $g \Hg g', g \ne g$), Theorem \ref{th:1h} (if $(g,g') \notin \Hg$) or Lemma \ref{lm:equal} (if $g=g'$),
all sets of the form $I_i \times \{b\}$, where $|b|\le k$, are contained in congruence classes of $\th'$. By a dual argument $\{a\} \times I_k$
is contained in a $\th'$ class for $|a| \le i$. By choosing $|a|=0*=|b|$, we see that these sets intersect. It follows that $\th'$ and hence $\th$ have a class that contains
$D_{0*,0*}$. It is straightforward to check that such a $\th$-congruence class $Y$ is an ideal of $Q_m \times Q_n$.  By Lemma \ref{lem:ideals1}, all ideals are unions of $\Dg$-classes.
Therefore $Y$ is a dlock that contains $X$. As dlocks are disjoint $X=Y$, and so
$X$ is a single $\th$-class that is an ideal.
%Hence
%the class of $\th$ containing  $D_{0*,0*}$ must be the dlock $X$.

Conversely, suppose that $I$ is an ideal class of $\th$, and that $P$ indexes the $\mathcal{D}$-classes intersecting
$I$. By Lemma \ref{lem:ideals1}, the ideal $I$ is a union of  $\Dg$-classes, so that $D_P=I$, and $I$ is a dlock. It is a dlock of
type $FF$  unless either  $\pi_1(I)$ or $\pi_2(I)$ consists of a single $\Hg$-class. The listed exceptions are the only
way this can happen, as we assumed that $Q_n$, $Q_m$ are non-trivial.
\end{proof}
In particular, by Lemma \ref{lem:ideals}, the congruence $\th$ has at most one dlock of type $FF$. If it exists, the  unique dlock of type $FF$
is the $\th$-class that contains $D_{0*,0*}$. By Lemma \ref{lem:ideals1}, we can visualize this dlock as
 a ``landscape" (see Figure 1).
\begin{figure}[h]
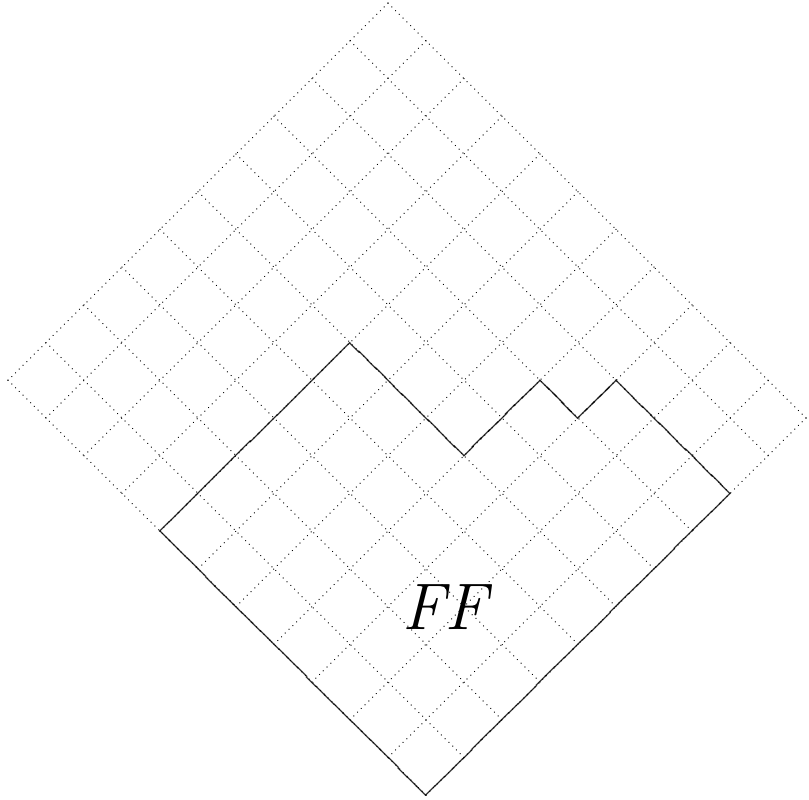

\[
\xy
%picture
(20,30)*{}="1";
(15,35)*{}="2";
(10,40)*{}="3";
(5,45)*{}="4";
(0,50)*{}="5";
(10,60)*{}="6";
(15,55)*{}="7";
(15,65)*{}="8";
(20,60)*{}="9";
(25,65)*{}="10";
(30,60)*{}="11";
(35,55)*{}="12";
(40,50)*{}="13";
(35,45)*{}="14";
(30,50)*{}="15";
(25,55)*{}="16";
(50,50)*{}="17";
(45,55)*{}="18";
(55,55)*{}="19";
(60,40)*{}="20";
(55,45)*{}="21";
(60,50)*{}="22";
(55,-5)*{}="O";
(65,55)*{}="23";
(65,45)*{}="24";
(70,50)*{}="25";
(75,55)*{}="26";
(85,55)*{}="29";
(75,45)*{}="27";
(80,50)*{}="28";
(85,45)*{}="30";
(90,50)*{}="31";
(95,55)*{}="32";
(95,35)*{}="33";
(100,40)*{}="34";
(105,45)*{}="35";
"O";"1"**\dir{-};
"1";"18"**\dir{-};
"18";"20"**\dir{-};
"20";"25"**\dir{-};
"25";"27"**\dir{-};
"27";"28"**\dir{-};
"28";"33"**\dir{-};
"O";"33"**\dir{-};
%repeat to improve thickness
%letters
(58,20)*{\mbox{\textit{{\Huge FF}}}};
%(10,55)*{\varepsilon  F};
%(15,50)*{\varepsilon  F};
%(20,45)*{\varepsilon  F};
%(15,60)*{{ \varepsilon}{\mathcal H}};
%(25,60)*{{ \varepsilon}{\mathcal H}};
%(30,55)*{{ \varepsilon}{\mathcal H}};
%(35,50)*{{ \varepsilon}{\mathcal H}};
%(25,40)*{{\mathcal H}  F};
%(60,45)*{{\mathcal H}{\mathcal H}};
%(75,50)*{{\mathcal H}{ \varepsilon}};
%(85,50)*{{\mathcal H}{\mathcal H}};
%(55,50)*{{ \varepsilon}{\mathcal H}};
%(65,50)*{{\mathcal H}{ \varepsilon}};
%(92,43)*{F{\mathcal H} };
%(96,48)*{F \varepsilon};
%(0,50)*{};
%(20,70)*{};
%(25,65)*{};
%(30,60)*{};
%grid
%side a
(55,-5)*{}="aO";
(0,50)*{}="a1";
(5,45)*{}="a2";
(10,40)*{}="a3";
(15,35)*{}="a4";
(20,30)*{}="a5";
(25,25)*{}="a6";
(30,20)*{}="a7";
(35,15)*{}="a8";
(40,10)*{}="a9";
(45,5)*{}="a10";
(50,0)*{}="a11";
%side b
(60,0)*{}="b3";
(65,5)*{}="b4";
(70,10)*{}="b5";
(75,15)*{}="b6";
(80,20)*{}="b7";
(85,25)*{}="b8";
(90,30)*{}="b9";
(95,35)*{}="b10";
(100,40)*{}="b11";
(105,45)*{}="b12";
%side c
(0,50)*{}="c1";
(5,55)*{}="c2";
(10,60)*{}="c3";
(15,65)*{}="c4";
(20,70)*{}="c5";
(25,75)*{}="c6";
(30,80)*{}="c7";
(35,85)*{}="c8";
(40,90)*{}="c9";
(45,95)*{}="c10";
(50,100)*{}="c11";
%side d
(50,100)*{}="d1";
(55,95)*{}="d2";
(60,90)*{}="d3";
(65,85)*{}="d4";
(70,80)*{}="d5";
(75,75)*{}="d6";
(80,70)*{}="d7";
(85,65)*{}="d8";
(90,60)*{}="d9";
(95,55)*{}="d10";
(100,50)*{}="d11";
%lines a-d
"a1";"d1"**\dir{.};
"a2";"d2"**\dir{.};
"a3";"d3"**\dir{.};
"a4";"d4"**\dir{.};
"a5";"d5"**\dir{.};
"a6";"d6"**\dir{.};
"a7";"d7"**\dir{.};
"a8";"d8"**\dir{.};
"a9";"d9"**\dir{.};
"a10";"d10"**\dir{.};
"a11";"d11"**\dir{.};
%lines c-b
"aO";"c1"**\dir{.};
"b3";"c2"**\dir{.};
"b4";"c3"**\dir{.};
"b5";"c4"**\dir{.};
"b6";"c5"**\dir{.};
"b7";"c6"**\dir{.};
"b8";"c7"**\dir{.};
"b9";"c8"**\dir{.};
"b10";"c9"**\dir{.};
"b11";"c10"**\dir{.};
"aO";"b12"**\dir{.};
"c11";"b12"**\dir{.};
\endxy
\]
\caption{A possible configuration for a dlock of type $FF$}\label{FF}
\end{figure}

\begin{lem}\label{lm:t2l3l}
Let $X$ be a $\th$-dlock. Then $X$ is of type $\varepsilon F$ or $\Hg F$ if and only if
there exist $0* \le i\le m$, $1 \le j \le n$, and $N \unlhd S_i $,
such that $X=D_P$ with $P=\{i\} \times \{0*, \dots, j\}$, and
 for every $(f,g) \in X$,
\begin{equation} [(f,g)]_\th = \{(f',g')\in X| f \Hg f' \mbox{ and } f'=f\cdot \sigma \mbox{ for some } \sigma \in N\}.
\label{dlock2}\end{equation}

%Conversely, suppose there exist $0* \le i\le m$, $1 \le j \le n$, $P=\{i\} \times \{0*, \dots, j)\}$, $X=D_P$, and $ N \unlhd S_i $
%such that  every $(f,g) \in X$ satisfies
%\begin{equation}[(f,g)]_\th = \{(f',g')\in D_P| f \Hg f' \mbox{ and } f'=f\cdot \sigma \mbox{ for some } \sigma \in N\}.
%\label{dlock3}\end{equation}
%Then $X$ is dlock of type $\varepsilon F$ or $\Hg F$.

If $X$ satisfies these requirements, then $X$ is of type $\Hg F$ exactly when $N \ne \varepsilon_i$.
\end{lem}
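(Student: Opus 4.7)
The plan is to prove the two implications separately, with the ``if'' direction being routine and the ``only if'' direction requiring more substantial work.

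For the backward direction, I would assume $X$ has the stated structure with parameters $(i,j,N)$ and verify directly from formula (\ref{dlock2}) that any pair $(f,g)\,\th\,(f',g')$ in $X$ satisfies $f' = f\cdot\sigma$ for some $\sigma \in N$, whence $f\,\Hg\,f'$. This gives first-component type $\varepsilon$ when $N = \varepsilon_i$ (since then $\sigma = \id$ forces $f' = f$) and type $\Hg$ otherwise. Second-component type $F$ follows because one can pick $(f,g), (f,g')\in X$ with $|g|\ne|g'|$ in the range $\{0*,\dots,j\}$, which is possible since $j\ge 1$.

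For the forward direction, suppose $X$ has type $\varepsilon F$ or $\Hg F$. First, both $\varepsilon$ and $\Hg$ preserve the first-coordinate rank among $\th$-related pairs of $X$, and $\Dg$ preserves both coordinate ranks; hence the first-rank is constant on $X$, call it $i$. By Lemma \ref{lem:single}, $\th_f$ depends only on $|f|=i$; and the type $F$ witness (or, in the pure $\Hg$ case, an appeal to Theorem \ref{th:1h} to generate relations within a fixed first coordinate) places a non-$\Hg$-related pair into $\th_f$. Theorem \ref{th:congrQn} then yields $\th_f = \th(k',N')$ with $k'\ge 2$ and $I_{k'-1}$ contained in a single $\th_f$-class. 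Setting $j := k' - 1$, the $\th$-class of any $(f,g)\in X$ contains $\{f\}\times I_j$, and closing under $\Dg$ forces $D_{\{i\}\times\{0*,\dots,j\}}\subseteq X$. For the reverse inclusion, I would exclude $D_{i,k'}$ from $X$: $\th_f$ separates rank $k'$ from lower ranks at a fixed $f$, and any cross-first-component $\th$-pair bridging second-ranks $<k'$ and $k'$ would, via Theorem \ref{th:0h} or \ref{th:1h}, generate a subcongruence whose classes include elements of first-rank below $i$, contradicting the type assumption.

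For the subgroup $N$, I would fix $g_0\in Q_n$ with $|g_0| = 0*$ and examine the dual congruence $\th_{g_0}$ on $Q_m$, writing $\th_{g_0} = \th(i^*,N^*)$ via Theorem \ref{th:congrQn}. The absence of first-coordinate type $F$ in $X$ forces $i^*\le i$; I set $N := N^*$ if $i^* = i$, else $N := \varepsilon_i$. The formula (\ref{dlock2}) is then verified via the transitivity detour
\[(f,g)\;\th\;(f,g_0)\;\th\;(f\cdot\sigma,g_0)\;\th\;(f\cdot\sigma,g'),\]
combining $\th_f$, $\th_{g_0}$, and $\th_{f\cdot\sigma}$; the converse direction uses the analogous detour to extract $(f,f')\in\th_{g_0}$ from any related pair in $X$, whence $f' = f\cdot\sigma$ with $\sigma\in N$ follows by Theorem \ref{th:congrQn}. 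Finally, the split between $\varepsilon F$ and $\Hg F$ corresponds exactly to whether $N = \varepsilon_i$. The hardest part will be this last step: choosing $N$ uniformly via a single $g_0$ and closing the detour argument; a secondary difficulty lies in the careful exclusion of $D_{i,k'}$ from $X$, which relies on the principal-congruence structure from Theorems \ref{th:0h} and \ref{th:1h}.
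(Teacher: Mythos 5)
Your overall architecture is sound and in places genuinely different from the paper's: the paper takes $j$ to be the largest second rank occurring in $X$ and then produces a non-$\Hg$ witness at level $(i,j)$ by a dlock-minimality argument, whereas you read $j$ off the congruence $\th_f=\th(k',N')$ as $k'-1$; and the paper obtains $N$ as the maximal normal subgroup generated by a single witnessed $\sigma$, whereas you extract it from $\th_{g_0}$ at the bottom level $|g_0|=0*$. Your construction of $N$ and the transitivity detour through $(f,g_0)$ are correct and arguably cleaner than the paper's maximality argument (the collapse $(f,g)(1,g_0)=(f,g_0)$ works for all three monoids, and $\th_{g_0}=\th(i^*,N^*)$ with $i^*\le i$ gives exactly the right group). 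The backward direction and the $\varepsilon F$ versus $\Hg F$ dichotomy are also fine.

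The genuine gap is in the exclusion step, which your choice of $j$ makes unavoidable (the paper's choice sidesteps it entirely). You claim that a $\th$-pair $((a,b),(c,d))\in\th\cap X^2$ with $a\ne c$ and $|b|<k'\le|d|$ would, ``via Theorem \ref{th:0h} or \ref{th:1h}, generate a subcongruence whose classes include elements of first-rank below $i$, contradicting the type assumption.'' Neither cited theorem is the applicable one: inside $X$ the first components are $\Hg$-related, so Theorem \ref{th:0h} cannot apply, and the relevant result is the \emph{dual} of Theorem \ref{th:1h}. More importantly, the conclusion does not follow: by the dual of Corollary \ref{c:1h}, the class of $(a,b)$ in the generated subcongruence is $\{(a\cdot\omega,\,\hat g):\omega\in N_1,\ |\hat g|\le\max\{|b|,|d|\}\}$, all of whose elements have first rank exactly $i$. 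The classes that do contain elements of first rank below $i$ lie in a \emph{different} dlock, so they yield no contradiction with the type of $X$. The correct argument is simpler: the dual of Theorem \ref{th:1h} gives $(a,b)\,\th\,(a,d)$ directly, so $(b,d)\in\th_a=\th_f=\th(k',N')$, which is impossible since $\th(k',N')$ never relates an element of rank $<k'$ to one of rank $\ge k'$. With that repair (and noting that you must exclude all $D_{i,l}$ with $l\ge k'$, not only $D_{i,k'}$, which the same argument does, since connectivity of the dlock under $\th\cup\Dg$ forces some bridging pair across the rank-$k'$ threshold), your proof goes through.
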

\begin{proof}
Let $X=D_P$ be a $\th$-dlock. Suppose $X$ is of  of type $\varepsilon F$ or $\Hg F$. Then for every $\th$-class $C$ contained in $X$, we have $\pi_1(C)$ contained in an $\Hg$-class of $Q_m$ by the definitions of first component types $\varepsilon$ and $\Hg$. Hence $\pi_1(C)$ is contained in a $\Dg$-class of $Q_m$ for all such $C$.
Since the elements of $\pi_1(X)$ that are $\th$-related are pairwise either equal or $\Hg$-related, $\pi_1(X)$ must be a single $\Dg$-class of $Q_m$, say $D_i$, by the definition of $\th$-dlock.
Therefore
$P=\{i\} \times K$ for some non-empty $K\subseteq \{0*,\dots, n\}$. Let $j$ be the largest element in $K$.
We claim that there exists $((f,g), (f',g')) \in \th$ with $(g,g') \not\in \Hg$ and $(f,g) \in D_{i,j}$. For otherwise
$[(\bar f,\bar g)]_{\th}$ would be contained in an $\Hg$-class for every $(\bar f,\bar g) \in D_{i,j}$, and then
$\{(i,j)\}$ would index a dlock contained in $X$, and $X$ being  minimal would imply $D_{i,j}=X$.
However, $X$ is  a dlock of type $\epsilon F$ or $\Hg F$, and we have a contradiction.

So there exist $((f,g),(f',g')) \in \th$, such that $(g,g') \notin \Hg$ and $(f,g) \in D_{i,j}$. Now, there exists $\sigma \in S_i$ such that $f'=f \cdot \sigma$. Let $N_\sigma$ be the normal subgroup of $S_i$  generated by ${\sigma}$.
Let $N$ be maximal between all such $N_\sigma$.
%where   $((f,g),(f',g')) \in \th$,$(g,g') \notin \Hg$, and $(f,g) \in D_{i,j}$, and 
Assume that  $((f,g),(f',g'))$ witnesses $N$.

%Hence let $((f,g), (f',g'))\in \th$ satisfy $(g,g') \not\in \Hg$ and $(f,g) \in D_{i,j}$. Recall that $(f,f') \in \Hg$.
% Let $\sigma \in S_i$ be given by $f'=f\cdot \sigma$, and let $N'$ be
%the normal subgroup of $S_i$ generated by $\sigma$. We may assume that $(f,g),(f',g')$ are such
% that $N'$ is maximal among all eligible choices.

Let $\th'$ be the principal congruence generated by $((f,g),(f',g'))$.
Either  Corollary \ref{c:equal} or the dual of Theorem \ref{th:1h} is applicable to  $\th'$  - the first one if $f=f'$, and hence $N=\varepsilon_i$, and the second one otherwise.

Assume first that $N \ne \varepsilon_i$, then by the dual of Theorem \ref{th:1h},
we get that for all $(\hat f, \hat g) \in D_{\{i\} \times \{0*,\dots,j\}}$,
\begin{equation} \label{dlock0}[(\hat f, \hat g)]_{\th'}=\{(\bar f, \bar g)\,|\, |\bar g|\le j,\bar f=\hat f \cdot \bar\sigma \mbox{ for some } \bar\sigma \in N\}.\end{equation}
If $N = \varepsilon_i$ then by  Corollary \ref{c:equal},
we get that for all $(\hat f, \hat g) \in D_{\{i\} \times \{0*,\dots,j\}}$,
\begin{equation}\label{dlock0a}
    [(\hat f, \hat g)]_{\th'}=\{(\hat f, \bar g)\,|\,|\bar g|\le j\} =\{(\bar f, \bar g) \,|\, |\bar g|\le j, \bar f = \hat f \cdot \bar\sigma \mbox{ for some }  \bar\sigma \in N\}.
\end{equation}
As the rightmost expressions in (\ref{dlock0}) and (\ref{dlock0a}) are identical, we may treat both cases simultaneously.

We claim that the sets from (\ref{dlock0}) or (\ref{dlock0a}) are also congruence classes of $\th$. As $\th' \subseteq \th$ the sets in (\ref{dlock0}), (\ref{dlock0a})  are contained in classes of $\th$. We
 have already established that $P\subseteq \{i\} \times \{0*,\dots,j\}$,
it now follows that
$P=\{i\} \times \{0*,\dots,j\}$, since the sets in (\ref{dlock0}), (\ref{dlock0a}) intersect all $\Dg$-classes indexed by $\{i\} \times \{0*,\dots,j\}$.

Now let $E$ be a congruence class of $\th$ that is contained in $X$. Then $E$ must be a union of sets from (\ref{dlock0}) or (\ref{dlock0a}), and in particular, must intersect
 $D_{i,j}$, say $(\hat f, \hat g) \in E \cap D_{i,j}$. Let $(\bar f,\bar g) \th (\hat f,\hat g)$. Then $|\bar g| \le j$, since $ E \subseteq X=D_P$. Moreover
$(\hat f,\bar f) \in \Hg$ as $X$ is a dlock of type $\varepsilon F$ or $\Hg F$. Now, if $\beta$ is given by $\bar f=\hat f\cdot \beta$, then $\beta \in N$, by the maximality of $N$.
It follows that $E$ is one of the sets from (\ref{dlock0}), (\ref{dlock0a}), and thus contained in $[(\hat f,\hat g)]_{\th'}$. Therefore $[(\hat f,\hat g)]_{\th}=[(\hat f,\hat g)]_{\th'}$, for $(\hat f, \hat g)\in X$.

%Hence the sets in (\ref{dlock0}), (\ref{dlock0a}) are $\th$-classes. It follows that $\{i\} \times \{0*,\dots,j\}$ is the index set of a $\th$-dlock, and hence equals $P$.
%Hence $X$ is the union of the $\th$-classes given in (\ref{dlock0}) or (\ref{dlock0a}).
% If we set $N=N'$, the first statement follows, except for the bound $j \ge 1$,
%from (\ref{dlock0}) or (\ref{dlock0a}).
Notice that $j$ cannot be $0$. In fact,
if $j=0$, then $\pi_2(X)$  only contains one element, which contradicts the definition of type $\epsilon F$ or $\Hg F$. We have concluded the proof of the ``if" direction
of the first statement of the lemma.

The ``only if" direction  now follows directly from the description (\ref{dlock2}), provided that there exists $(g,g') \not\in \Hg$,
$(g,g') \in \pi_2(X)$. This holds as $Q_n$ is non-trivial and $j\ge 1$.

Finally, the last statement  follows directly from (\ref{dlock2}) and the definitions of type $\varepsilon F$ or $\Hg F$.
\end{proof}

If $X$ is a dlock of type $\Hg F$ or $\varepsilon F$, we call the group $N \unlhd S_{i} $ from Lemma \ref{lm:t2l3l} the \emph{normal subgroup associated with} $X$. Clearly, a dual version of  Lemma \ref{lm:t2l3l} holds for
dlocks $X$ of type $F\varepsilon$ or $F \Hg$.

\begin{lem}\label{lm:t2l}
Let $X=D_P$ be a dlock of type  $\Hg F$ with $P=\{i\} \times \{0*, \dots, j)\}$ and associated normal subgroup $N \unlhd S_i $.  Then $i\ge 2$, and
$\{0*,\dots,i-1\}\times \{0*, \dots, j\}$ is contained in the index set of a $\th$-dlock of type $FF$.

Moreover, every  congruence $\th$ has at most one dlock of type $\Hg F$.
\end{lem}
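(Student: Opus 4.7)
The plan is to deduce both claims from the structural data of Lemma \ref{lm:t2l3l} together with a reduction to a pair whose principal congruence spans an ideal $\th$-class. The bound $i\ge 2$ is immediate: type $\Hg F$ requires, by the last clause of Lemma \ref{lm:t2l3l}, that the associated normal subgroup $N\unlhd S_i$ be non-trivial, which excludes $i\in\{0,1\}$.

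For the existence of a surrounding $FF$-dlock, I would start from a witnessing pair $((f,g),(f',g'))\in\th$ of the sort furnished in the proof of Lemma \ref{lm:t2l3l}: $|f|=|f'|=i$, $f\Hg f'$, $f\ne f'$, $|g|=j$, and $(g,g')\notin\Hg$. The key step is to break the $\Hg$-tie between $f$ and $f'$ while preserving $(g,g')\notin\Hg$. This is achieved by the same multiplicative reduction used in the proof of Theorem \ref{th:2h}: for $Q\in\{\T,\PT\}$, right-multiply the pair by an $(h,\id)$ that merges two kernel classes of $f$ when $i\ge 3$, or left-multiply by a constant into a fixed kernel class when $i=2$; for $Q=\In$, right-multiply by the partial identity on the complement of a suitable point. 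In every case one obtains $((\bar f,g),(\bar f',g'))\in\th$ with $(\bar f,\bar f')\notin\Hg$ and $|\bar f|,|\bar f'|\le i-1$. Theorem \ref{th:0h} then identifies the principal congruence generated by this new pair as the Rees congruence of an ideal containing $I_{i-1}\times I_j$.

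Consequently, $I_{i-1}\times I_j$ lies in a single $\th$-class $C$, and a routine check shows that $C$ is itself an ideal of $Q_m\times Q_n$. By Lemma \ref{lem:ideals1}, $C$ is a union of $\Dg$-classes, hence is a $\th$-dlock whose index set contains $\{0*,\dots,i-1\}\times\{0*,\dots,j\}$. Since $i-1\ge 1$ and $j\ge 1$ (and $Q_m$, $Q_n$ are non-trivial), each projection of $C$ meets more than one $\Hg$-class, so neither exception of Lemma \ref{lm:t1} applies and $C$ has type $FF$, as desired.

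For uniqueness, suppose $X'=D_{\{i'\}\times\{0*,\dots,j'\}}$ is a second $\Hg F$-dlock. If $i=i'$ then $X$ and $X'$ both contain $D_{i,0*}$ and coincide by the disjointness of dlocks. If $i<i'$, applying the previous paragraph to $X'$ produces an $FF$-dlock whose index set includes $(i,0*)$; this dlock shares $D_{i,0*}$ with $X$, forcing $X$ itself to be of type $FF$, a contradiction. The main obstacle is the case-by-case choice of $h$ in the reduction, which must be arranged separately for $\T$, $\PT$, and $\In$; once that is in hand, Theorem \ref{th:0h} and Lemma \ref{lm:t1} do the rest.
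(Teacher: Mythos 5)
Your proof is correct, and its skeleton (witnessing pair from Lemma \ref{lm:t2l3l}, a principal congruence that swallows $I_{i-1}\times I_j$ into one class, the ideal/dlock argument via Lemma \ref{lem:ideals1} and Lemma \ref{lm:t1}, then uniqueness) matches the paper's; the difference lies in the middle step. The paper observes that the witnessing pair $((f,g),(f',g'))$ with $f\Hg f'$, $f\ne f'$ and $(g,g')\notin\Hg$ already falls squarely under the dual of Corollary \ref{c:1h}, whose case (3) states outright that $D_{\{0*,\dots,i-1\}\times\{0*,\dots,j\}}$ is a single class of the principal congruence generated by that pair --- no further manipulation required. You instead first destroy the $\Hg$-relation in the first coordinate by the rank-reducing multiplications from the proof of Theorem \ref{th:2h} (with the attendant case split over $Q\in\{\T,\PT,\In\}$ and $i=2$ versus $i\ge 3$) and only then apply Theorem \ref{th:0h}. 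This works: the reduced pair is non-$\Hg$ in both coordinates, its ranks are $(i-1,|g|)$ and $(i-1,|g'|)$ with $|g|=j\ge|g'|$, so Theorem \ref{th:0h} gives the Rees congruence on $I_{i-1}\times I_j$, and your passage from ``class containing an ideal'' to ``ideal class, hence $FF$-dlock'' is the same as in Lemma \ref{lm:t1}. But it re-derives, in a special case, machinery that Section \ref{s:principal} already packages in Theorem \ref{th:1h}/Corollary \ref{c:1h}, so the detour buys nothing except extra case analysis. For uniqueness you argue directly from disjointness of dlocks (if $i<i'$ then $D_{i,0*}$ would lie both in $X$ and in the $FF$-dlock produced from $X'$, forcing $X$ to have two incompatible types), whereas the paper routes through the downward-closedness of the $FF$-dlock's index set to pin down $i=i'$; the two arguments are equivalent, and yours is arguably slightly cleaner.
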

\begin{proof}
By Lemma \ref{lm:t2l3l}, we have $\varepsilon_i \ne N \unlhd S_i$, which implies that $i \ge 2$. By
the description (\ref{dlock2}), we may find $((f,g),(f',g')) \in \th \cap X^2$, such that
$(f,g) \in D_{i,j}$, $f \ne f'$,  $(f,f') \in \Hg$, and $(g,g') \not\in \Hg$. As $X=D_P$, $|g'| \le j$, $|f| =i$.

Let $\th'$ be the principal congruence generated by $((f,g),(f',g'))$. Then $\th'$ is described in the dual of Corollary \ref{c:1h}.
By this corollary, if $C:=\{0*,\dots,i-1\}\times \{0*, \dots, j\}$ then $D_C$ is one equivalence class of $\th'$, and hence contained in
an equivalence class of $\th$.  As $j \ge 1, i-1 \ge 1$, and we excluded the case that $Q_n$ is trivial, $\pi_1(D_C)$ and
$\pi_2(D_C)$ both contain more then one $\Hg$-class. Hence $C$ is contained in the index set of a
$\th$-dlock $\bar X$ of type $FF$.

Now assume that $\th$ has a potentially different dlock $X'=D_{P'}$ of type $\Hg F$, where $P'=\{i'\} \times \{0*, \dots, j')\}$.
By applying our previous results to $X'$, we get that
$D_{i'-1,0*}$ must also lie in a $\th$-dlock  $\bar X$ of type $FF$. As noted after Lemma \ref{lm:t1}, dlocks of type $FF$ are unique, and so $D_{i'-1,0*} \subseteq \bar X$.
So both $D_{i-1,0*}\subseteq \bar X$ and $D_{i'-1,0*}\subseteq \bar X$, but $D_{i,0*}\not\subseteq \bar X$ and $D_{i',0*}\not\subseteq \bar X$, as these $\Dg$-classes lie in the $\Hg F$-dlocks $X$ and $X'$. By Lemma \ref{lm:t1},
the index set of $\bar X$ is downwards closed, hence there is a unique $\bar i$ such that $D_{\bar i, 0*} \subseteq \bar X$, $D_{\bar i+1,0*}\not\subseteq \bar X$. It follows that $i=\bar i +1 = i'$.
%It follows that $i'$ and $i$ are both one more than  the largest value $\bar i$ for which $D_{\bar i, 0*}\subseteq \bar X$, and so $i=i'$.
Thus both $X$ and $X'$ contain $D_{i,0*}$ and therefore $X=X'$.
\end{proof}

We may visualize the statement of Lemma \ref{lm:t2l} by saying that a $\th$-dlock of type $\Hg F$ must lies on the  ``most eastern
slope" of the dlock of type $FF$.
Once again, a dual result holds for dlocks of type $F \Hg$.
Figure 2 shows the possible positions for dlocks of type $\Hg F$ and $F \Hg$
 in relation to a dlock of type $FF$.

\begin{figure}[h]
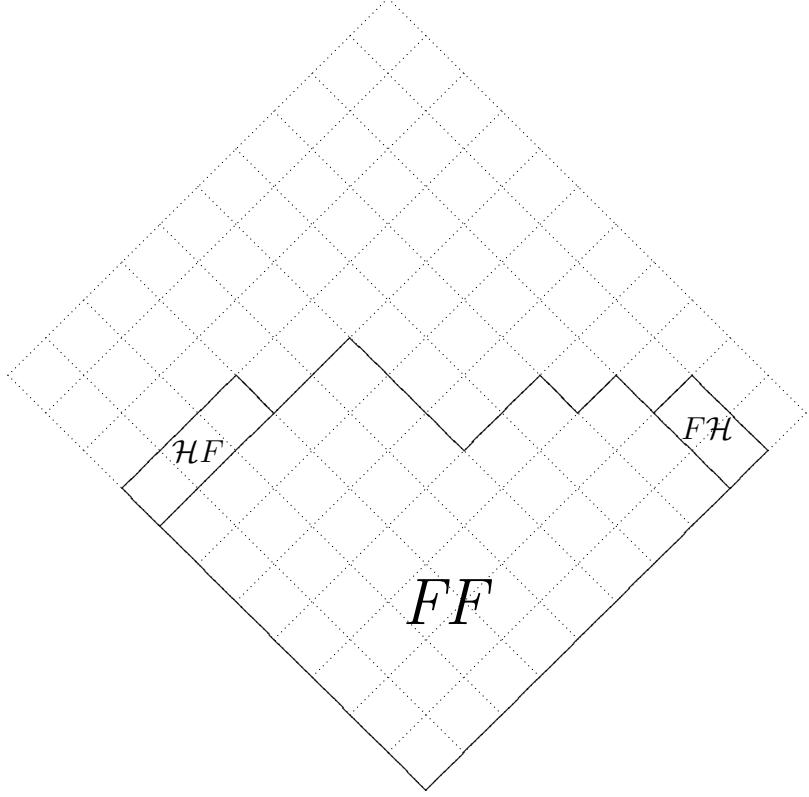

\[
\xy
%picture
(20,30)*{}="1";
(15,35)*{}="2";
(10,40)*{}="3";
(5,45)*{}="4";
(0,50)*{}="5";
(10,60)*{}="6";
(15,55)*{}="7";
(15,65)*{}="8";
(20,60)*{}="9";
(25,65)*{}="10";
(30,60)*{}="11";
(35,55)*{}="12";
(40,50)*{}="13";
(35,45)*{}="14";
(30,50)*{}="15";
(25,55)*{}="16";
(50,50)*{}="17";
(45,55)*{}="18";
(55,55)*{}="19";
(60,40)*{}="20";
(55,45)*{}="21";
(60,50)*{}="22";
(55,-5)*{}="O";
(65,55)*{}="23";
(65,45)*{}="24";
(70,50)*{}="25";
(75,55)*{}="26";
(85,55)*{}="29";
(75,45)*{}="27";
(80,50)*{}="28";
(85,45)*{}="30";
(90,50)*{}="31";
(95,55)*{}="32";
(95,35)*{}="33";
(100,40)*{}="34";
(105,45)*{}="35";
"O";"2"**\dir{-};
"1";"18"**\dir{-};
"18";"20"**\dir{-};
"20";"25"**\dir{-};
"25";"27"**\dir{-};
"27";"28"**\dir{-};
"28";"33"**\dir{-};
"O";"34"**\dir{-};
"2";"15"**\dir{-};
"15";"14"**\dir{-};
"30";"31"**\dir{-};
"31";"34"**\dir{-};
%repeat to improve thickness
%letters
(58,20)*{\mbox{\textit{{\Huge FF}}}};
%(10,55)*{\varepsilon  F};
%(15,50)*{\varepsilon  F};
%(20,45)*{\varepsilon  F};
%(15,60)*{{ \varepsilon}{\mathcal H}};
%(25,60)*{{ \varepsilon}{\mathcal H}};
%(30,55)*{{ \varepsilon}{\mathcal H}};
%(35,50)*{{ \varepsilon}{\mathcal H}};
(25,40)*{{\mathcal H}  F};
%(60,45)*{{\mathcal H}{\mathcal H}};
%(75,50)*{{\mathcal H}{ \varepsilon}};
%(85,50)*{{\mathcal H}{\mathcal H}};
%(55,50)*{{ \varepsilon}{\mathcal H}};
%(65,50)*{{\mathcal H}{ \varepsilon}};
(92,43)*{F{\mathcal H} };
%(96,48)*{F \varepsilon};
%(0,50)*{};
%(20,70)*{};
%(25,65)*{};
%(30,60)*{};
%grid
%side a
(55,-5)*{}="aO";
(0,50)*{}="a1";
(5,45)*{}="a2";
(10,40)*{}="a3";
(15,35)*{}="a4";
(20,30)*{}="a5";
(25,25)*{}="a6";
(30,20)*{}="a7";
(35,15)*{}="a8";
(40,10)*{}="a9";
(45,5)*{}="a10";
(50,0)*{}="a11";
%side b
(60,0)*{}="b3";
(65,5)*{}="b4";
(70,10)*{}="b5";
(75,15)*{}="b6";
(80,20)*{}="b7";
(85,25)*{}="b8";
(90,30)*{}="b9";
(95,35)*{}="b10";
(100,40)*{}="b11";
(105,45)*{}="b12";
%side c
(0,50)*{}="c1";
(5,55)*{}="c2";
(10,60)*{}="c3";
(15,65)*{}="c4";
(20,70)*{}="c5";
(25,75)*{}="c6";
(30,80)*{}="c7";
(35,85)*{}="c8";
(40,90)*{}="c9";
(45,95)*{}="c10";
(50,100)*{}="c11";
%side d
(50,100)*{}="d1";
(55,95)*{}="d2";
(60,90)*{}="d3";
(65,85)*{}="d4";
(70,80)*{}="d5";
(75,75)*{}="d6";
(80,70)*{}="d7";
(85,65)*{}="d8";
(90,60)*{}="d9";
(95,55)*{}="d10";
(100,50)*{}="d11";
%lines a-d
"a1";"d1"**\dir{.};
"a2";"d2"**\dir{.};
"a3";"d3"**\dir{.};
"a4";"d4"**\dir{.};
"a5";"d5"**\dir{.};
"a6";"d6"**\dir{.};
"a7";"d7"**\dir{.};
"a8";"d8"**\dir{.};
"a9";"d9"**\dir{.};
"a10";"d10"**\dir{.};
"a11";"d11"**\dir{.};
%lines c-b
"aO";"c1"**\dir{.};
"b3";"c2"**\dir{.};
"b4";"c3"**\dir{.};
"b5";"c4"**\dir{.};
"b6";"c5"**\dir{.};
"b7";"c6"**\dir{.};
"b8";"c7"**\dir{.};
"b9";"c8"**\dir{.};
"b10";"c9"**\dir{.};
"b11";"c10"**\dir{.};
"aO";"b12"**\dir{.};
"c11";"b12"**\dir{.};
\endxy
\]
\caption{A possible configuration for three dlocks of type $FF$, $\Hg F$ and $F \Hg$}\label{grid1}
\end{figure}

\begin{lem}\label{lm:t3l}
Let $X=D_P$ be a $\th$-dlock of type $\varepsilon F$ with  $P=\{i\} \times \{0*, \dots, j)\}$. Then for each $k=0*,\dots, i-1$, the set
$\{k\}\times\{0*,\dots,j\}$ is contained in the index set of a $\th$-dlock of type $FF$, $\Hg F$, or $\varepsilon F$.

%In general, let $J$ be the set of values $(i,j_i)$ that appear in the index set of a $\th$-dlock of type $\varepsilon F$, such that $j_i$ is maximal with respect to a given $i$.
%Then $\pi_1(J)$ is a
%set of consecutive integers (possibly empty), and the values $j_i$ are decreasing in $i$.
\end{lem}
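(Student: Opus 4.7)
The plan is to leverage the explicit description of the $\th$-classes contained in $X$ afforded by Lemma \ref{lm:t2l3l}, and then to transport a witnessing non-$\Hg$ relation in the second coordinate down to lower first-coordinate ranks using the ideal structure of $Q_m$, much in the spirit of Lemma \ref{lem:single}(2). Since $X$ has type $\varepsilon F$, its associated normal subgroup must be $N = \varepsilon_i$, so for every $(f, g) \in X$ the $\th$-class is precisely $\{f\} \times \{b \in Q_n : |b| \le j\}$. The second-component type being $F$ forces $j \ge 1$ and guarantees the existence of $(f, g), (f, g') \in X$ with $(g, g') \notin \Hg$; concretely one may take $|g| = j$ and $|g'| < j$ (which exist because $Q_n$ is non-trivial).

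Next, fix $k \in \{0*, \dots, i-1\}$ and pick any $a \in Q_m$ with $|a| = k$. Since $|a| < |f| = i$, we have $a \in Q_m f Q_m$, so $a = u f v$ for suitable $u, v \in Q_m$. Multiplying the relation $(f, g) \,\th\, (f, \bar g)$ -- which holds for every $\bar g$ with $|\bar g| \le j$ -- on the left by $(u, 1)$ and on the right by $(v, 1)$ yields $(a, g) \,\th\, (a, \bar g)$ for each such $\bar g$. Consequently $\{a\} \times I_j^{(n)} \subseteq [(a, g)]_\th$, and in particular $(a, g) \,\th\, (a, g')$ with $(g, g') \notin \Hg$.

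Let $Y$ denote the $\th$-dlock containing $(a, g)$. The witnessing pair $(a, g) \,\th\, (a, g')$ shows that $Y$ has second-component type $F$, so its overall type is one of $FF$, $\Hg F$, or $\varepsilon F$. Moreover, because $[(a, g)]_\th$ meets every $\Dg$-class $D_{k, l}$ for $l \in \{0*, \dots, j\}$, and $Y$ is by definition a union of $\Dg$-classes, the index set of $Y$ contains $\{k\} \times \{0*, \dots, j\}$, as required. The argument is essentially routine once Lemma \ref{lm:t2l3l} is in hand; the only point requiring a little care is ensuring that $j \ge 1$ so that the non-$\Hg$ witness in the second coordinate actually persists after the pushdown to rank $k$, but this is automatic from the definition of second-component type $F$.
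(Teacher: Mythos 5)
Your proof is correct and rests on the same core mechanism as the paper's: extract from the type-$\varepsilon F$ class description of Lemma \ref{lm:t2l3l} a pair $(f,g)\,\th\,(f,g')$ with $(g,g')\notin\Hg$, and transport it down to first-coordinate rank $k$ so that the dlock containing $D_{k,j}$ is forced to have second-component type $F$. The organizational difference is that the paper only treats $k=i-1$ (via the principal congruence generated by the witness pair and the description in Corollary \ref{c:equal}) and then inducts downward, invoking Lemma \ref{lm:t1} or Lemma \ref{lm:t2l} when the intermediate dlock turns out to be of type $FF$ or $\Hg F$; you instead write $a=ufv$ for an arbitrary $a$ of rank $k<i$ and multiply by $(u,1)$ and $(v,1)$, which handles all $k$ in one step and dispenses with the induction entirely — a mild but genuine simplification, since it makes the conclusion for each $k$ independent of the types of the dlocks in between. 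One cosmetic quibble: your parenthetical suggestion to witness non-$\Hg$-relatedness by taking $|g|=j$ and $|g'|<j$ fails in the edge case $Q=\T$, $j=1$ (there are no elements of rank $<1$), but this is harmless because, as you also note, the definition of second-component type $F$ together with first-component type $\varepsilon$ already guarantees a pair $(f,g)\,\th\,(f,g')$ in $X$ with $(g,g')\notin\Hg$.
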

\begin{proof}
If $i=0*$, the statement quantifies  over the empty set, so assume this is not the case. It suffices to show
the statement for the case that $k=i-1$, the remaining values follow by induction applying: the same result if the resulting  $\th$-dlock $\bar X$ is of type $\varepsilon F$; Lemma \ref{lm:t1}
if $\bar X$ is of type $FF$; or Lemma \ref{lm:t2l} if $\bar X$ is of type $\Hg F$.

By Lemma \ref{lm:t2l3l} we may find $((f,g),(f,g')) \in \th \cap X^2$, such that
$(f,g) \in D_{i,j}$, $(g,g') \not\in \Hg$. Note that this implies that $|g'| \le j$.

Let
$\th'$ be the principal congruence generated by $((f,g),(f,g'))$. Then $\th'$ is described in the dual of Corollary \ref{c:1h}.
By this corollary, if $|\bar f| ={i-1}$, then
$$[(\bar f, g)]_{\th'}= \{(\bar f, \hat g)|\, |\hat g| \le j\}.$$
It follows that
 $\{i-1\}\times\{0*,\dots,j\}$ is the index set of a $\th'$-dlock and hence contained in the index set of a $\th$-dlock $X'$. As
$((\bar f, g),(\bar f,g')) \in \th'\subseteq \th$, and $(g,g') \not\in \Hg$, it follows that
$X'$ is of type $FF$, $\Hg F$, or $\varepsilon F$, as these three options cover all cases where $\pi_2(D_{P'}) \not\subseteq \Hg$. The result follows.\end{proof}

\begin{lem}\label{lm:t3lb}
For a given congruence $\th$, let $J$ be the set of values $(i,j_i)$ such that $\{i\} \times \{0*, \dots,j_i\}$ is the index set of a $\th$-dlock of type $\varepsilon F$.
Then $\pi_1(J)$ is a
set of consecutive integers (possibly empty), and the values $j_i$ are non-increasing in $i$.
\end{lem}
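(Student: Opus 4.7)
The plan is to show both claims by a uniform argument that invokes Lemma \ref{lm:t3l} together with the uniqueness/downward-closedness results established for dlocks of type $FF$ and $\Hg F$. Throughout, let $i_1, i_2 \in \pi_1(J)$ with $i_1 < i_2$, and for each such index $t \in \pi_1(J)$ denote by $X_t = D_{\{t\} \times \{0*, \dots, j_t\}}$ the associated $\varepsilon F$-dlock. Note in particular that $D_{i_1,0*} \subseteq X_{i_1}$.

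The first step is to show $\pi_1(J)$ has no gaps. Fix $i$ with $i_1 < i < i_2$. Apply Lemma \ref{lm:t3l} to $X_{i_2}$ at level $k=i$: the set $\{i\} \times \{0*, \dots, j_{i_2}\}$ is contained in the index set of a $\th$-dlock $X'$ of type $FF$, $\Hg F$, or $\varepsilon F$. I will rule out the first two possibilities. If $X'$ were of type $FF$, then by Lemma \ref{lm:t1} its index set is downward-closed in $\leq \times \leq$, so it would contain $(i_1,0*)$, forcing $D_{i_1,0*} \subseteq X'$ and contradicting $D_{i_1,0*} \subseteq X_{i_1}$ (since distinct dlocks are disjoint). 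If $X'$ were of type $\Hg F$ with index set $\{i\}\times\{0*,\dots,j'\}$, then by Lemma \ref{lm:t2l}, the set $\{0*,\dots,i-1\}\times\{0*,\dots,j'\}$ lies in the (necessarily unique) $FF$-dlock; but $i-1 \ge i_1$, so again $D_{i_1,0*}$ would lie in this $FF$-dlock, contradicting $D_{i_1,0*} \subseteq X_{i_1}$. Hence $X'$ must be of type $\varepsilon F$, so $i \in \pi_1(J)$.

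For the monotonicity claim, I apply Lemma \ref{lm:t3l} to $X_{i_2}$ at level $k=i_1$: the set $\{i_1\} \times \{0*, \dots, j_{i_2}\}$ lies in the index set of a dlock $X'$ of type $FF$, $\Hg F$, or $\varepsilon F$. The same two exclusions from the previous paragraph rule out the $FF$ and $\Hg F$ cases (each would force $D_{i_1,0*}$ into an $FF$-dlock, contradicting $i_1 \in \pi_1(J)$). Hence $X'$ is of type $\varepsilon F$, and since it contains $(i_1,0*)$, disjointness of dlocks forces $X' = X_{i_1}$. Therefore $\{i_1\} \times \{0*, \dots, j_{i_2}\} \subseteq \{i_1\} \times \{0*, \dots, j_{i_1}\}$, giving $j_{i_2} \le j_{i_1}$.

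The only delicate point — which is really the crux of the argument — is the consistent use of the fact that distinct dlocks are disjoint, combined with uniqueness of the $FF$-dlock and the positional constraint on the $\Hg F$-dlock from Lemma \ref{lm:t2l}. Once these are marshalled correctly, both conclusions follow by the same two-case exclusion, so I expect no further technical obstacles.
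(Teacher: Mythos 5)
Your proof is correct and follows essentially the same route as the paper: apply Lemma \ref{lm:t3l} to the $\varepsilon F$-dlock at the larger index, then exclude the $FF$ and $\Hg F$ possibilities via Lemma \ref{lm:t1} (downward-closedness) and Lemma \ref{lm:t2l}, using disjointness of dlocks and the fact that $D_{i_1,0*}$ already lies in an $\varepsilon F$-dlock. (In your monotonicity paragraph the parenthetical justification for excluding $\Hg F$ at level $i_1$ is slightly off, but it is moot: as you note, any dlock containing $D_{i_1,0*}$ must equal $X_{i_1}$ by disjointness, which settles that case immediately.)
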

\begin{proof}
Assume that $i_1 < i_2 <i_3$ with $i_1,i_3 \in \pi_1(J)$. We want to show that $i_2 \in \pi_1(J)$, as well. As $i_1, i_3 \in \pi_1(J)$, $X_1= D_{\{i_1\} \times \{0*, \dots,j_{i_1}\}}$ and
$X_3= D_{\{i_3\} \times \{0*, \dots,j_{i_3}\}}$ are $\th$-dlocks of type $\varepsilon F$, for some $j_{i_1}, j_{i_3}$. Applying Lemma \ref{lm:t3l} to $X_3$, we get that
$D_{\{i_2\} \times \{0*, \dots,j_{i_3}\}}$ is contained in $\th$-dlock $X'$ of type $FF$, $\Hg F$, or $\varepsilon F$.

If $X'$ is of type $\Hg F$ or $FF$, then by Lemma \ref{lm:t2l} or Lemma \ref{lm:t1}, respectively, $D_{\{i_1\} \times \{0*, \dots,j_{i_3}\}}$ would be contained in a dlock of type $FF$, which in the latter case
would be the dlock $X'$ itself. In particular, $D_{i_1, 0*}$ would be contained in a dlock of type $FF$. However, as $i_1 \in J$, $D_{i_1, 0*}$ is contained in the dlock $X_1$ of type $\varepsilon F$, a contradiction.
Hence $X'$ is of type  $\varepsilon F$, and $i_2 \in \pi_1(J)$. It follows that $\pi_1(J)$, if not empty, is a set of consecutive integers.

Now let $(i,j_i), (i-1,j_{i-1})\in J$. As above, we have that $D_{\{i-1\} \times \{0*, \dots,j_i\}}$ is contained in a $\th$-dlock $X'$ of type $FF$, $\Hg F$, or $\varepsilon F$. As $(i-1,j_{i-1}) \in J$, this must necessarily be in
the dlock $D_{\{i-1\} \times \{0*, \dots,j_{i-1}\}}$ of type $\varepsilon F$. Hence $j_i \le j_{i-1}$, and so the values $j_i$ are non-increasing in $i$.
\end{proof}

%The remaining claims now follows directly from the first
% in connection with  Lemma \ref{lm:t1} and Lemma \ref{lm:t2l}.

Lemma \ref{lm:t3l} and Lemma \ref{lm:t3lb} show that, if there are any $\th$-dlocks of type $\varepsilon F$, they are layered on  the top of each
other without ``overhanging", with the lowest one either starting at $i=0*$, or lying on the top
of the eastern most slope of the dlock of type $FF$, or lying on the top of the dlock of type $\Hg F$,
in the last two cases without overhanging the dlocks below them.
An example is depicted in Figure 3.
Once again, the dual versions of Lemmas \ref{lm:t3l} and \ref{lm:t3lb} hold as well.
\begin{figure}[h]
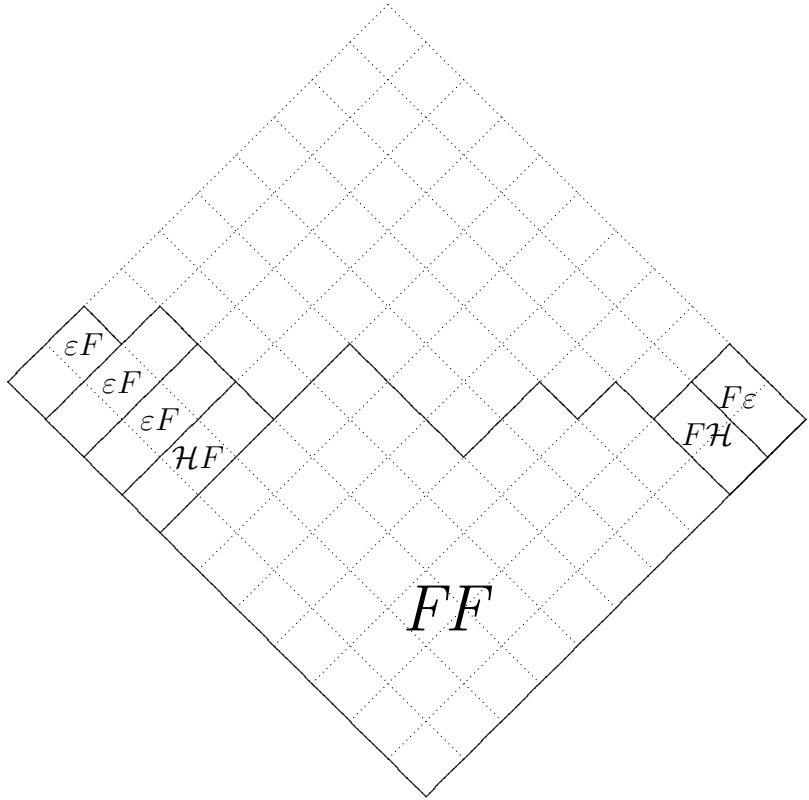

\[
\xy
%picture
(20,30)*{}="1";
(15,35)*{}="2";
(10,40)*{}="3";
(5,45)*{}="4";
(0,50)*{}="5";
(10,60)*{}="6";
(15,55)*{}="7";
(15,65)*{}="8";
(20,60)*{}="9";
(25,65)*{}="10";
(30,60)*{}="11";
(35,55)*{}="12";
(40,50)*{}="13";
(35,45)*{}="14";
(30,50)*{}="15";
(25,55)*{}="16";
(50,50)*{}="17";
(45,55)*{}="18";
(55,55)*{}="19";
(60,40)*{}="20";
(55,45)*{}="21";
(60,50)*{}="22";
(55,-5)*{}="O";
(65,55)*{}="23";
(65,45)*{}="24";
(70,50)*{}="25";
(75,55)*{}="26";
(85,55)*{}="29";
(75,45)*{}="27";
(80,50)*{}="28";
(85,45)*{}="30";
(90,50)*{}="31";
(95,55)*{}="32";
(95,35)*{}="33";
(100,40)*{}="34";
(105,45)*{}="35";
"O";"5"**\dir{-};
"3";"16"**\dir{-};
"16";"14"**\dir{-};
"2";"15"**\dir{-};
"18";"1"**\dir{-};
"18";"20"**\dir{-};
"20";"25"**\dir{-};
"25";"27"**\dir{-};
"28";"27"**\dir{-};
"28";"33"**\dir{-};
"33";"35"**\dir{-};
"32";"35"**\dir{-};
"31";"34"**\dir{-};
"32";"30"**\dir{-};
"O";"35"**\dir{-};
"4";"9"**\dir{-};
"9";"16"**\dir{-};
"5";"6"**\dir{-};
"7";"6"**\dir{-};
%repeat to improve thickness
%letters
(58,20)*{\mbox{\textit{{\Huge FF}}}};
(10,55)*{\varepsilon  F};
(15,50)*{\varepsilon  F};
(20,45)*{\varepsilon  F};
%(15,60)*{{ \varepsilon}{\mathcal H}};
%(25,60)*{{ \varepsilon}{\mathcal H}};
%(30,55)*{{ \varepsilon}{\mathcal H}};
%(35,50)*{{ \varepsilon}{\mathcal H}};
(25,40)*{{\mathcal H}  F};
%(60,45)*{{\mathcal H}{\mathcal H}};
%(75,50)*{{\mathcal H}{ \varepsilon}};
%(85,50)*{{\mathcal H}{\mathcal H}};
%(55,50)*{{ \varepsilon}{\mathcal H}};
%(65,50)*{{\mathcal H}{ \varepsilon}};
(92,43)*{F{\mathcal H} };
(96,48)*{F \varepsilon};
(0,50)*{};
(20,70)*{};
(25,65)*{};
(30,60)*{};
%grid
%side a
(55,-5)*{}="aO";
(0,50)*{}="a1";
(5,45)*{}="a2";
(10,40)*{}="a3";
(15,35)*{}="a4";
(20,30)*{}="a5";
(25,25)*{}="a6";
(30,20)*{}="a7";
(35,15)*{}="a8";
(40,10)*{}="a9";
(45,5)*{}="a10";
(50,0)*{}="a11";
%side b
(60,0)*{}="b3";
(65,5)*{}="b4";
(70,10)*{}="b5";
(75,15)*{}="b6";
(80,20)*{}="b7";
(85,25)*{}="b8";
(90,30)*{}="b9";
(95,35)*{}="b10";
(100,40)*{}="b11";
(105,45)*{}="b12";
%side c
(0,50)*{}="c1";
(5,55)*{}="c2";
(10,60)*{}="c3";
(15,65)*{}="c4";
(20,70)*{}="c5";
(25,75)*{}="c6";
(30,80)*{}="c7";
(35,85)*{}="c8";
(40,90)*{}="c9";
(45,95)*{}="c10";
(50,100)*{}="c11";
%side d
(50,100)*{}="d1";
(55,95)*{}="d2";
(60,90)*{}="d3";
(65,85)*{}="d4";
(70,80)*{}="d5";
(75,75)*{}="d6";
(80,70)*{}="d7";
(85,65)*{}="d8";
(90,60)*{}="d9";
(95,55)*{}="d10";
(100,50)*{}="d11";
%lines a-d
"a1";"d1"**\dir{.};
"a2";"d2"**\dir{.};
"a3";"d3"**\dir{.};
"a4";"d4"**\dir{.};
"a5";"d5"**\dir{.};
"a6";"d6"**\dir{.};
"a7";"d7"**\dir{.};
"a8";"d8"**\dir{.};
"a9";"d9"**\dir{.};
"a10";"d10"**\dir{.};
"a11";"d11"**\dir{.};
%lines c-b
"aO";"c1"**\dir{.};
"b3";"c2"**\dir{.};
"b4";"c3"**\dir{.};
"b5";"c4"**\dir{.};
"b6";"c5"**\dir{.};
"b7";"c6"**\dir{.};
"b8";"c7"**\dir{.};
"b9";"c8"**\dir{.};
"b10";"c9"**\dir{.};
"b11";"c10"**\dir{.};
"aO";"b12"**\dir{.};
"c11";"b12"**\dir{.};
\endxy
\]
\caption{A possible configuration for dlocks of type $FF$, $\Hg F$,  $F \Hg$, $\varepsilon F$, and $F \varepsilon$}\label{Fe}
\end{figure}

\begin{lem}\label{lm:t456}
Let $X$ be a dlock. Then $X$ is of type $\Hg \Hg$, $\varepsilon \Hg$, $\Hg \varepsilon$, or $\varepsilon \varepsilon$ if and only if $X=D_{i,j}$ for some $i,j$ and there exists  $N \unlhd S_i \times S_j$
such that
 for every $(f,g) \in X$,
%\begin{equation} [(f,g)]_\th = \{(f',g')\in D_{i,j}| f \Hg f', g \Hg g' \mbox{ and } (f',g')=(f\cdot \sigma,g \cdot \tau) \mbox{ for some } (\sigma,\tau) \in N\}.
%\label{dlock456}\end{equation}
 \begin{eqnarray}
 % \nonumber to remove numbering (before each equation)
 \nonumber  [(f,g)]_\th &=& \{(f',g')\in D_{i,j}\: | \: f \Hg f', g \Hg g',  \\
    &&\,\mbox{ and } (f',g')=(f\cdot \sigma,g \cdot \tau) \mbox{ for some } (\sigma,\tau) \in N\}.\label{dlock456}
 \end{eqnarray}
%Conversely, if $X=D_{i,j}$ and there exists
%an $N \unlhd S_i \times S_j$
%such that
% for every $(f,g) \in X$,
% \begin{eqnarray}
% % \nonumber to remove numbering (before each equation)
% \nonumber  [(f,g)]_\th &=& \{(f',g')\in D_{i,j} |  f \Hg f', g \Hg g',  \\
%    &&\,\mbox{ and } (f,g')=(f\cdot \sigma,g \cdot \tau) \mbox{ for some } (\sigma,\tau) \in N\},
% \end{eqnarray}
%\begin{equation} [(f,g)]_\th = \{(f',g')\in D_{i,j}| f \Hg f', g \Hg g' \mbox{ and } (f,g')=(f\cdot \sigma,g \cdot \tau) \mbox{ for some } (\sigma,\tau) \in N\},\end{equation}
%then $X$ is a $\th$-dlock of type  $\Hg \Hg$, $\varepsilon \Hg$, $\Hg \varepsilon$, or $\varepsilon \varepsilon$.
Moreover, in this situation,
\begin{enumerate}
\item[(a)] $X$ is of type $\Hg \Hg$ $\Longleftrightarrow \pi_1(N) \ne \varepsilon_i \mbox{ and } \pi_2(N) \ne \varepsilon_j$;
\item[(b)]  $X$ is of type $\varepsilon \Hg$ $\Longleftrightarrow N= \varepsilon_i \times N'$ for some $N' \ne \varepsilon_j$;
\item[(c)]  $X$ is of type $\Hg \varepsilon$ $\Longleftrightarrow N= N' \times\varepsilon_j$ for some $N' \ne \varepsilon_i$;
\item[(d)]  $X$ is of type $\varepsilon \varepsilon$ $\Longleftrightarrow N= \varepsilon_i \times \varepsilon_j$.
\end{enumerate}
\end{lem}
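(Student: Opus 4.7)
The plan is to prove the ``only if'' direction by first reducing $X$ to a single $\Dg$-class, then identifying the governing normal subgroup $N$ via the group structure of an idempotent $\Hg$-class, and finally extending the description to the remaining $\Hg$-classes of $X$ using the conjugation method from Theorem \ref{th:2h}. The ``if'' direction and the classification (a)--(d) will then follow by directly reading off component types.

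First, I would argue $X=D_{i,j}$ for some $(i,j)$. Since both component types of $X$ lie in $\{\varepsilon,\Hg\}$, every $\th$-class $C\subseteq X$ has $\pi_1(C)$ inside a single $\Hg$-class of $Q_m$ and $\pi_2(C)$ inside a single $\Hg$-class of $Q_n$, so $C\subseteq D_{i_C,j_C}$ for some pair $(i_C,j_C)$. Because $X$ is a class of the equivalence generated by $\Dg\cup\th$, and both $\Dg$-steps (which preserve the rank pair) and $\th$-steps (which stay in a fixed $D_{i_C,j_C}$) keep us in the same $D_{i,j}$, the pair $(i_C,j_C)$ is constant across $X$; minimality of the dlock then forces $X=D_{i,j}$.

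Next, I would pick an idempotent $(e_1,e_2)\in D_{i,j}$, which exists because $Q_m$ and $Q_n$ have idempotent $\Hg$-classes of every relevant rank. Its $\Hg$-class $H=H_{e_1}\times H_{e_2}$ is a group, and $\phi\colon(\sigma,\tau)\mapsto(e_1\cdot\sigma,e_2\cdot\tau)$ is an isomorphism $S_i\times S_j\to H$. The restriction $\th|_H$ is a group congruence on $H$, corresponding via $\phi$ to a normal subgroup $N\unlhd S_i\times S_j$; left-multiplying by elements of $H$ immediately yields (\ref{dlock456}) for every $(f,g)\in H$. The main step, and the principal obstacle, is to show that the same $N$ governs every other $\Hg$-class of $D_{i,j}$. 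Given $(f,g)\in D_{i,j}$, I would use regularity, as in the proof of Theorem \ref{th:2h}, case (\ref{cased}), to write $f=h_1e_1h_2$, $g=h_3e_2h_4$ with $h_2,h_4$ bijective on the relevant images. For $(\tau_1,\tau_2)\in N$, the calculation there produces $(\omega_1,\omega_2)\in S_i\times S_j$ conjugate to $(\tau_1,\tau_2)$ (via $(h_2^{-1},h_4^{-1})$ suitably extended) and satisfying $(e_1\cdot\omega_1,e_2\cdot\omega_2)\,\th\,(e_1,e_2)$; normality of $N$ gives $(\omega_1,\omega_2)\in N$, and the sandwich identities convert this into $(f,g)\,\th\,(f\cdot\tau_1,g\cdot\tau_2)$. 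Conversely, if $(f,g)\,\th\,(f',g')$ in $X$, the assumed type of $X$ forces $(f',g')\in H_f\times H_g$, so there is a unique $(\tau_1,\tau_2)\in S_i\times S_j$ with $(f',g')=(f\cdot\tau_1,g\cdot\tau_2)$; running the sandwich argument in reverse, I would transport the $\th$-relation back to $H$ and apply normality once more to conclude $(\tau_1,\tau_2)\in N$. This establishes (\ref{dlock456}) throughout $X$.

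For the reverse direction, (\ref{dlock456}) directly implies that any $\th$-related pair in $X$ has componentwise $\Hg$-related coordinates, so $X$'s component types lie in $\{\varepsilon,\Hg\}$ and $X$ is of one of the four listed types. Parts (a)--(d) then follow by unfolding the definitions: the first component type is $\Hg$ precisely when some $(\sigma,\tau)\in N$ has $\sigma\neq\mathrm{id}$, equivalently $\pi_1(N)\neq\varepsilon_i$, and is $\varepsilon$ precisely when $\pi_1(N)=\varepsilon_i$, i.e.\ $N\subseteq\{\mathrm{id}\}\times S_j$; the symmetric statements hold for the second coordinate, yielding the four stated equivalences.
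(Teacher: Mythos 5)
Your argument is correct, but it is organized differently from the paper's. The paper defines $N$ globally, as the set of \emph{all} pairs $(\sigma,\tau)$ witnessed by some $((f,g),(f\cdot\sigma,g\cdot\tau))\in\th\cap X^2$, and propagates each witnessed pair to every element of $X$ by invoking the \emph{statements} of Theorem \ref{th:2h} and of Lemma \ref{lm:equal} and its dual, applied to the principal congruence generated by the witnessing pair; that $N$ is a subgroup then follows from the group-action property, and normality follows because $N$ contains the normal closure of each of its elements. You instead anchor $N$ at the group $\Hg$-class of an idempotent of $D_{i,j}$, where normality is automatic from the group--congruence correspondence, and you pay for this by re-running the conjugation/sandwich computation from the \emph{proof} of Theorem \ref{th:2h} in both directions; in particular your ``reverse sandwich'' needs the small check that transporting $(f,g)\,\th\,(f\cdot\tau_1,g\cdot\tau_2)$ back into the idempotent's $\Hg$-class lands in that $\Hg$-class and yields a permutation pair conjugate to $(\tau_1,\tau_2)$ in $S_i\times S_j$, which goes through exactly as in the paper's cycle-structure argument (using that the sandwiching maps restrict to bijections on the relevant images). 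Both routes are sound: the paper's is shorter on the page because it reuses Theorem \ref{th:2h} as a black box rather than as a technique, while yours makes the role of the idempotent maximal subgroup transparent and avoids having to verify a posteriori that the globally defined $N$ is a group. Your treatment of $X=D_{i,j}$, of the converse implication, and of the classification (a)--(d) matches the paper's.
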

\begin{proof}
Let $X$ be of type $\Hg \Hg$, $\varepsilon \Hg$, $\Hg \varepsilon$, or $\varepsilon \varepsilon$. Then $\pi_1(\th \cap X^2) \subseteq \Hg$ and $\pi_2(\th \cap X^2)\subseteq \Hg$. It follows that
each $\Hg$-class in $X$ is a union of $\th$-classes. Therefore every $\Dg$-class in $X$ is a union of
$\th$-classes as well, and by the minimality property of a dlock, there is only one $\Dg$-class in $X$. It follows
that $X=D_{i,j}$ for some $(i,j)$.

Now as $\th \cap X^2 \subseteq \Hg \times \Hg$, having $((f,g),(f',g')) \in \th \cap X^2$ implies that $f'=f \cdot \sigma, g'=g \cdot \tau$ for some $\sigma \in S_i$ and $ \tau \in S_j$.
Let $N \subseteq S_i \times S_j$ be the set of all $(\sigma, \tau)$ that correspond to some   $((f,g),(f',g')) \in \th \cap X^2$.

If $N=\{(\id_{S_i}, \id_{S_j})\}$ then $\th$ is the identity on $X$, and $\th\cap X^2$ is given
by (\ref{dlock456}) with the choice $ N= \varepsilon_i \times \varepsilon_j$.

Otherwise, let $(\id_{S_i}, \id_{S_j}) \ne (\sigma, \tau) \in N$, as witnessed by $((f,g),(f \cdot \sigma, g \cdot \tau)) \in \th \cap X^2$. Let $\th'$ be the principal congruence generated by $((f,g),(f \cdot \sigma, g \cdot \tau))$,
and let $(\bar f, \bar g) \in X$. By either Theorem \ref{th:2h} (if $\sigma \ne \id_{S_i}$ and $\tau \ne \id_{S_j}$) or Lemma \ref{lm:equal} and its dual (otherwise), $((\bar f , \bar g),(\bar f \cdot \sigma, \bar g \cdot g)) \in \th' \subseteq \th$.
By repeated use of this argument, we get that the equivalence relation given by (\ref{dlock456}) is contained in $\th$. As the reverse inclusion follows from the definition of $N$, the expression (\ref{dlock456}) describes
$\th \cap X^2$.

It remains to show that $N$ is a normal subgroup of $S_i \times S_j$. Let $(\sigma, \tau), (\sigma', \tau') \in N$, and $(f,g)\in X$ arbitrary, then
$$(f \cdot \sigma, g \cdot \tau) \in \th \Rightarrow (((f \cdot \sigma) \cdot \sigma'), ((g \cdot \tau) \cdot \tau')))\in \th \Rightarrow ((f \cdot (\sigma \sigma')), (g \cdot (\tau \tau'))\in \th,$$
where the first implication follows as (\ref{dlock456}) describes
$\th$ on $X$, and the second implication as $\cdot$ is a group action. Thus $N$ is a subgroup of $S_i \times S_j$. Now if $(\sigma, \tau) \in N$, then by either Theorem \ref{th:2h}, Lemma \ref{lm:equal}, or the dual of Lemma \ref{lm:equal}
(applied to any $((f,g),(f \cdot \sigma, g \cdot \tau)) \in \th \cap X^2$), $N$ contains the normal subgroup generated by $(\sigma, \tau)$. Therefore $N$ is a  subgroup  generated by a union of normal subgroups,
and hence it is  itself normal.

The converse statement is immediate, and the characterization of the various types follows directly from the definition of the types and from (\ref{dlock456}).
\end{proof}
If $X=D_{i,j}$ is a dlock of type $\Hg \Hg$, $\varepsilon \Hg$, $\Hg \varepsilon$, or $\varepsilon \varepsilon$, we will call $N \unlhd S_i \times S_j$ from Lemma \ref{lm:t456} the \emph{normal subgroup associated with} $X$.
\begin{lem}\label{lm:t4}
Let $X=D_{i,j}$ be a $\th$-dlock of type $\Hg\Hg$ with normal subgroup $N \unlhd S_i \times S_j$.
Then $i,j \ge 2$, and $D_{i,j-1}$ is contained in either a $\th$-dlock of type $FF$, or in a $\th$-dlock of type $\Hg F$ with normal subgroup $N'\unlhd S_i$, where
$\pi_1(N) \subseteq N'$.

Symmetrically, $D_{i-1,j}$ is contained in either a $\th$-dlock of type $FF$, or in a $\th$-dlock of type $F \Hg$ with normal subgroup $N'\unlhd S_j$, where
$\pi_2(N) \subseteq N'$.
\end{lem}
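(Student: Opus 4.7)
The plan is to prove the first assertion, about $D_{i,j-1}$; the second follows by a symmetric argument. The bound $i,j\ge 2$ is immediate from Lemma~\ref{lm:t456}(a), which gives $\pi_1(N)\ne\varepsilon_i$ and $\pi_2(N)\ne\varepsilon_j$ and thus forces $S_i$ and $S_j$ to be nontrivial. Let $\bar X$ denote the $\th$-dlock containing $D_{i,j-1}$. The core task is to rule out every dlock type other than $FF$ and $\Hg F$ for $\bar X$, and then, in the $\Hg F$ case, to compare the associated normal subgroup $N'$ with $\pi_1(N)$.

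For the first part I would produce a single pair in $\th\cap D_{i,j-1}^2$ whose first coordinates differ and whose second coordinates are not $\Hg$-related, so that $\bar X$ cannot be of type $\varepsilon F$, $\varepsilon\Hg$, $\varepsilon\varepsilon$, $\Hg\Hg$, or $\Hg\varepsilon$. To this end I first pick $(\sigma,\tau)\in N$ with both $\sigma\ne\id$ and $\tau\ne\id$: such a pair always exists since, if $(\sigma_0,\id)$ and $(\id,\tau_1)$ lie in $N$ and witness the nontriviality of $\pi_1(N)$ and $\pi_2(N)$ respectively, then their product $(\sigma_0,\tau_1)$ does the job. For any $(f,g)\in D_{i,j}$, Lemma~\ref{lm:t456} gives $((f,g),(f\cdot\sigma,g\cdot\tau))\in\th$. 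Write $\im g=\{a_1<\cdots<a_j\}$ with kernel classes $K_1,\ldots,K_j$ satisfying $g(K_l)=a_l$, choose an index $k_0$ not fixed by $\tau$, pick representatives $y_l\in K_l$ for $l\ne k_0$, and let $v\in Q_n$ be any element with image $\{y_l:l\ne k_0\}$ (the partial identity on this set for $Q\in\{\PT,\In\}$, any total map with this image for $Q=\T$). Left-multiplication by $(1,v)$ yields $((f,vg),(f\cdot\sigma,v(g\cdot\tau)))\in\th\cap D_{i,j-1}^2$, and a direct calculation shows $\im(vg)=\im g\setminus\{a_{k_0}\}$ while $\im(v(g\cdot\tau))=\im g\setminus\{a_{k_0\tau}\}$, which differ since $k_0\tau\ne k_0$. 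Hence $vg\not\Hg v(g\cdot\tau)$ and $f\ne f\cdot\sigma$, giving the required pair.

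For the second part, assume $\bar X$ has type $\Hg F$ with associated group $N'\unlhd S_i$, and let $\sigma\in\pi_1(N)$, which we may assume is nontrivial. Pick $\tau$ with $(\sigma,\tau)\in N$ and $h\in Q_n$ with $|gh|=j-1$ (achieved by collapsing two image elements of $g$); since $\im(g\cdot\tau)=\im g$, also $|(g\cdot\tau)h|=j-1$. The pair $((f,gh),(f\cdot\sigma,(g\cdot\tau)h))\in\th$ lies in $D_{i,j-1}\subseteq\bar X$, so the description of $\th$-classes in an $\Hg F$-dlock from Lemma~\ref{lm:t2l3l} yields $\sigma''\in N'$ with $f\cdot\sigma=f\cdot\sigma''$. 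Since the $S_i$-action on the $\Hg$-class of $f$ is faithful, $\sigma=\sigma''\in N'$.

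The main obstacle is the construction of $v$ in the second paragraph. A naive right-multiplication by a rank-reducing $h$ fails when $j=2$, because any $h$ with $|gh|=1$ must collapse all of $\im g$ and therefore cannot distinguish $g$ from $g\cdot\tau$. The uniform left-multiplication above sidesteps this by forcing $\im v$ to miss exactly one kernel class, chosen so that $\tau$ permutes its label outside the remaining image. Beyond this step, the proof is a direct application of Lemmas~\ref{lm:t456} and~\ref{lm:t2l3l} together with the transitivity and faithfulness of the $S_i$-action on $\Hg$-classes.
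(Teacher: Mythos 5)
Your proof is correct and follows the same overall strategy as the paper: both arguments first exhibit a $\th$-pair in $D_{i,j-1}^2$ with distinct first coordinates and non-$\Hg$-related second coordinates to force the type to be $FF$ or $\Hg F$, and then push a pair $(\sigma,\tau)\in N$ down into $D_{i,j-1}$ and apply Lemma \ref{lm:t2l3l} together with the regularity (not merely faithfulness) of the $S_i$-action on an $\Hg$-class to conclude $\pi_1(N)\subseteq N'$. The only real difference is that the paper obtains the witnessing pair immediately from Theorem \ref{th:2h}(2) applied to the principal congruence generated by an $\Hg\Hg$-pair of $X$, whereas you construct it by hand via left multiplication by $(1,v)$; both routes are valid.
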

\begin{proof}
By Lemma \ref{lm:t456}, we have $\pi_1(N) \ne \varepsilon_i, \pi_2(N) \ne \varepsilon_j$, and so $i,j \ge 2$.
 Also by Lemma \ref{lm:t456}, there are $((f,g),(f',g')) \in \th \cap X^2$ with $f \ne f', g\ne g'$. Let us fix such a pair, and
 let $\th'$ be the principal congruence generated by it. Then $\th' \subseteq \th$ and
$\th'$ is described in Theorem \ref{th:2h}.

Let $\bar g, \bar g' \in Q_m$  be transformations of rank $j-1$ that are in different $\Hg$-classes. Such elements clearly exist. By Theorem \ref{th:2h}(2), we get
$((f, \bar g),(f', \bar g')) \in \th'\subseteq \th$. As $f \ne f'$, $(\bar g,\bar g') \not\in\Hg$, the $\th$-dlock $X'$ containing $D_{i,j-1}$ is either of type $FF$ or $\Hg F$, depending on the existence or not of a pair
$((\hat f, \hat g),(\hat f', \hat g')) \in \th \cap D_{i,j}$ with $(\hat f, \hat f') \not\in \Hg$.

In the first case, we are done, so assume that $X'$ is of type $\Hg F$. Then the restriction of $\th$ to
 ${X'}^2$ is given in Lemma \ref{lm:t2l3l}. Let $N'\unlhd S_i$ be the normal subgroup  of $X'$. We now wish to proof that $\pi_1(N) \subseteq N'$.

 Let $\sigma \in \pi_1(N)$,
 so $(\sigma,\tau) \in N$ for some $\tau$. Then $((f,g),(f \cdot \sigma, g\cdot \tau))\in \th$ by Lemma \ref{lm:t456}, and so $((f,g \bar g), (f\cdot \sigma, (g \cdot\tau) \bar g))\in \th$. As
$(f,g \bar g) $ and  $(f\cdot \sigma, (g \cdot\tau) \bar g)$ lie in the $\Hg F$-dlock $X'$, we get that $\pi_1(\sigma,\tau)=\sigma \in N'$ by Lemma  \ref{lm:t2l3l}.

% If  $((f,g),(f',g'))\in \th \cap X^2$, $f\ne f', g \ne g'$, then $f'=f \cdot \sigma, g'=g \cdot \tau$ for some $(\sigma, \tau) \in N$. With $\bar g, \bar g'$ as  above,
%$((f, \bar g),( f \cdot \sigma, \bar g'))\in \th$, and hence $\pi_1(\sigma, \tau)= \sigma \in N'$, by Lemma \ref{lm:t2l3l}. Iterating over all such $((f,g),(f',g'))$, we get that
%$\pi_1(N) \subseteq N'$.

The last statement follows dually.
\end{proof}
The result means that the dlocks of type $\Hg\Hg$ can only occupy the ``valleys'' in the landscape formed by the dlocks of $FF$, $\Hg F$, and $F \Hg$ (see Figure 4).
\begin{figure}[h]
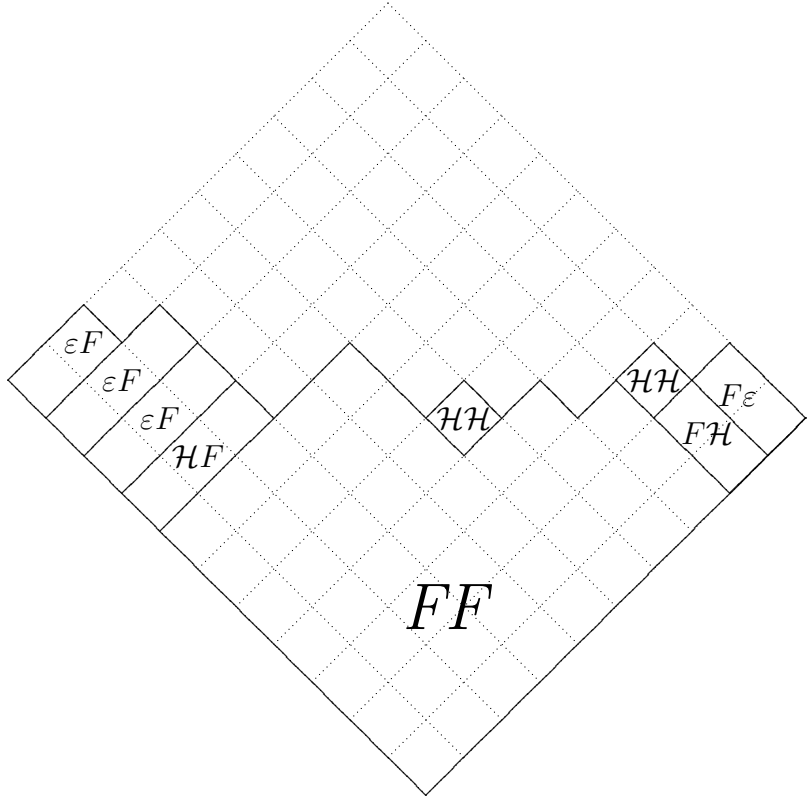

\[
\xy
%picture
(20,30)*{}="1";
(15,35)*{}="2";
(10,40)*{}="3";
(5,45)*{}="4";
(0,50)*{}="5";
(10,60)*{}="6";
(15,55)*{}="7";
(15,65)*{}="8";
(20,60)*{}="9";
(25,65)*{}="10";
(30,60)*{}="11";
(35,55)*{}="12";
(40,50)*{}="13";
(35,45)*{}="14";
(30,50)*{}="15";
(25,55)*{}="16";
(50,50)*{}="17";
(45,55)*{}="18";
(55,55)*{}="19";
(60,40)*{}="20";
(55,45)*{}="21";
(60,50)*{}="22";
(55,-5)*{}="O";
(65,55)*{}="23";
(65,45)*{}="24";
(70,50)*{}="25";
(75,55)*{}="26";
(85,55)*{}="29";
(75,45)*{}="27";
(80,50)*{}="28";
(85,45)*{}="30";
(90,50)*{}="31";
(95,55)*{}="32";
(95,35)*{}="33";
(100,40)*{}="34";
(105,45)*{}="35";
"O";"5"**\dir{-};
"3";"16"**\dir{-};
"16";"14"**\dir{-};
"2";"15"**\dir{-};
"18";"1"**\dir{-};
"18";"20"**\dir{-};
"20";"25"**\dir{-};
"25";"27"**\dir{-};
"28";"27"**\dir{-};
"28";"33"**\dir{-};
"33";"35"**\dir{-};
"32";"35"**\dir{-};
"31";"34"**\dir{-};
"32";"30"**\dir{-};
"O";"35"**\dir{-};
"22";"21"**\dir{-};
"22";"24"**\dir{-};
"28";"29"**\dir{-};
"31";"29"**\dir{-};
"9";"16"**\dir{-};
"5";"6"**\dir{-};
"7";"6"**\dir{-};
"7";"9"**\dir{-};
"7";"4"**\dir{-};
%repeat to improve thickness
%letters
(58,20)*{\mbox{\textit{{\Huge FF}}}};
(10,55)*{\varepsilon  F};
(15,50)*{\varepsilon  F};
(20,45)*{\varepsilon  F};
%(15,60)*{{ \varepsilon}{\mathcal H}};
%(25,60)*{{ \varepsilon}{\mathcal H}};
%(30,55)*{{ \varepsilon}{\mathcal H}};
%(35,50)*{{ \varepsilon}{\mathcal H}};
(25,40)*{{\mathcal H}  F};
(60,45)*{{\mathcal H}{\mathcal H}};
%(75,50)*{{\mathcal H}{ \varepsilon}};
(85,50)*{{\mathcal H}{\mathcal H}};
%(55,50)*{{ \varepsilon}{\mathcal H}};
%(65,50)*{{\mathcal H}{ \varepsilon}};
(92,43)*{F{\mathcal H} };
(96,48)*{F \varepsilon};
(0,50)*{};
(20,70)*{};
(25,65)*{};
(30,60)*{};
%grid
%side a
(55,-5)*{}="aO";
(0,50)*{}="a1";
(5,45)*{}="a2";
(10,40)*{}="a3";
(15,35)*{}="a4";
(20,30)*{}="a5";
(25,25)*{}="a6";
(30,20)*{}="a7";
(35,15)*{}="a8";
(40,10)*{}="a9";
(45,5)*{}="a10";
(50,0)*{}="a11";
%side b
(60,0)*{}="b3";
(65,5)*{}="b4";
(70,10)*{}="b5";
(75,15)*{}="b6";
(80,20)*{}="b7";
(85,25)*{}="b8";
(90,30)*{}="b9";
(95,35)*{}="b10";
(100,40)*{}="b11";
(105,45)*{}="b12";
%side c
(0,50)*{}="c1";
(5,55)*{}="c2";
(10,60)*{}="c3";
(15,65)*{}="c4";
(20,70)*{}="c5";
(25,75)*{}="c6";
(30,80)*{}="c7";
(35,85)*{}="c8";
(40,90)*{}="c9";
(45,95)*{}="c10";
(50,100)*{}="c11";
%side d
(50,100)*{}="d1";
(55,95)*{}="d2";
(60,90)*{}="d3";
(65,85)*{}="d4";
(70,80)*{}="d5";
(75,75)*{}="d6";
(80,70)*{}="d7";
(85,65)*{}="d8";
(90,60)*{}="d9";
(95,55)*{}="d10";
(100,50)*{}="d11";
%lines a-d
"a1";"d1"**\dir{.};
"a2";"d2"**\dir{.};
"a3";"d3"**\dir{.};
"a4";"d4"**\dir{.};
"a5";"d5"**\dir{.};
"a6";"d6"**\dir{.};
"a7";"d7"**\dir{.};
"a8";"d8"**\dir{.};
"a9";"d9"**\dir{.};
"a10";"d10"**\dir{.};
"a11";"d11"**\dir{.};
%lines c-b
"aO";"c1"**\dir{.};
"b3";"c2"**\dir{.};
"b4";"c3"**\dir{.};
"b5";"c4"**\dir{.};
"b6";"c5"**\dir{.};
"b7";"c6"**\dir{.};
"b8";"c7"**\dir{.};
"b9";"c8"**\dir{.};
"b10";"c9"**\dir{.};
"b11";"c10"**\dir{.};
"aO";"b12"**\dir{.};
"c11";"b12"**\dir{.};
\endxy
\]
\caption{A possible configuration for dlocks of type $FF$, $\Hg F$,  $F \Hg$, $\varepsilon F$, $F \varepsilon$, and $\Hg \Hg$}\label{HH}
\end{figure}

\begin{lem}\label{th:5l}
Let $X=D_{i,j}$ be a $\th$-dlock of type $\varepsilon \Hg$ with normal subgroup $N=\varepsilon_i \times   N' \unlhd S_i \times S_j$. Then $j \ge 2$, and $D_{i,j-1}$ is contained in a dlock of type $FF$, $\Hg F$, or $\varepsilon F$.

Moreover, if $i >0*$, then $D_{i-1,j}$ is contained in one of the following:
\begin{enumerate}
\item a dlock of type $FF$;
\item a dlock of type $\Hg F$;
\item a dlock of type $\varepsilon F$;
\item a dlock of type $F \Hg$  with normal subgroup $\bar N \unlhd S_j$ such that $N' \subseteq \bar N$;
\item a dlock of type $\Hg \Hg$ with normal subgroup $\bar N \unlhd S_{i-1} \times S_j$ such that $\varepsilon_{i-1} \times N' \subseteq \bar N$;
\item a dlock of type $\varepsilon \Hg$ with normal subgroup $\bar N \unlhd S_{i-1} \times S_j$ such that $\varepsilon_{i-1} \times N' \subseteq \bar N$.
\end{enumerate}
\end{lem}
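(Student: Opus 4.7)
The plan is to mirror the strategy of Lemma \ref{lm:t4}. Since $X$ has type $\varepsilon\Hg$, Lemma \ref{lm:t456}(b) gives $N' \neq \varepsilon_j$; as $S_j$ then has a nontrivial normal subgroup, $j \geq 2$. By the same lemma, fix $((f,g),(f,g')) \in \th \cap X^2$ with $g' = g\cdot\tau$ for some nontrivial $\tau \in N'$, so $g \Hg g'$ and $g \neq g'$.

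For the claim about $D_{i,j-1}$, let $\th'$ be the principal congruence generated by $((f,g),(f,g'))$. Since $(g,g')\in\Hg$ and $g \neq g'$, Corollary \ref{c:equal} applies; its third clause gives $(\hat f, \hat g)\,\th'\,(\hat f, \hat g'')$ whenever $|\hat f| \leq i$ and $|\hat g|,|\hat g''| < j$. Pick $(\hat f, \hat g) \in D_{i,j-1}$ and $\hat g'' \in Q_n$ of rank $j-1$ lying in a different $\Hg$-class from $\hat g$ (possible because $Q_n$ is non-trivial and $j-1 \geq 1$). Then $(\hat f, \hat g)\,\th\,(\hat f, \hat g'')$ with $(\hat g, \hat g'') \notin \Hg$, so the dlock containing $D_{i,j-1}$ has second-component type $F$; hence it is of type $FF$, $\Hg F$, or $\varepsilon F$.

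For the claim about $D_{i-1,j}$ when $i > 0*$, pick $\tilde f \in Q_m$ of rank $i-1$. Since $|\tilde f| < |f|$, the ideal structure of $Q_m$ yields $h_1, h_2 \in Q_m$ with $\tilde f = h_1 f h_2$. Multiplying $((f,g),(f,g'))$ by $(h_1,1)$ on the left and $(h_2,1)$ on the right gives $((\tilde f, g),(\tilde f, g'))\in \th$ with equal first components and $\Hg$-related distinct second components. Hence the dlock $X''$ containing $D_{i-1,j}$ cannot have second-component type $\varepsilon$, leaving exactly the six type combinations listed in the statement.

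It remains to constrain the normal subgroup $\bar N$ associated with $X''$ in the three cases where $X''$ has second-component type $\Hg$. Repeating the construction above with an arbitrary $\tau \in N'$ and invoking the companion result for $X''$: in the $F\Hg$ case the dual of Lemma \ref{lm:t2l3l} forces $\tau \in \bar N$, giving $N' \subseteq \bar N$; in the $\Hg\Hg$ and $\varepsilon\Hg$ cases Lemma \ref{lm:t456} forces $(\id,\tau) \in \bar N$, giving $\varepsilon_{i-1}\times N' \subseteq \bar N$, where in the $\varepsilon\Hg$ subcase $\bar N$ automatically has the form $\varepsilon_{i-1}\times \bar N'$. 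The main obstacle is pure bookkeeping: correctly matching each subcase with its companion lemma (Lemma \ref{lm:t456} versus Lemma \ref{lm:t2l3l}, and their duals) and verifying that the constructed witnessing pairs lie in the intended dlock.
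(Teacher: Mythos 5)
Your proposal is correct and follows essentially the same route as the paper: use the nontriviality of $N'$ to get $j\ge 2$, push the witnessing pair $((f,g),(f,g\cdot\tau))$ down to ranks $j-1$ (via the description of the principal congruence it generates) and $i-1$ (via multiplication by $(h_1,1),(h_2,1)$), read off the admissible types from the resulting non-$\varepsilon$ second components, and then extract $\tau\in\bar N$ or $(\id,\tau)\in\bar N$ from Lemma \ref{lm:t2l3l} (dual) or Lemma \ref{lm:t456}. The only cosmetic difference is that the paper runs the last step with a single normal generator $\sigma$ of $N'$ and then invokes normality of $\bar N$, whereas you quantify over all $\tau\in N'$ directly; both are fine.
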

\begin{proof} As $N'$ is non-trivial, $j \ge 2$. Let $\sigma$ generate $N'$ as a normal subgroup in $S_j$. Let $(f,g) \in X$  and set $g'=g \cdot \sigma$. Then $((f,g),(f,g')) \in \th$. Let $\th'$ be the principal congruence generated
by this pair. Then $\th' \subseteq \th$ and
$\th'$ is described in Lemma \ref{lm:equal}.

Let $\bar g, \bar g' \in Q_m$ both be transformations of rank $j-1$ that are in different $\Hg$-classes. Such elements clearly exist. By Lemma \ref{lm:equal},
$((f, \bar g),(f,\bar g'))\in \th'\subseteq \th$. As $(\bar g,\bar g') \not\in\Hg$, the $\th$-dlock $X'$ containing $D_{i,j-1}$ is either of type $FF$, $\Hg F$, or $\varepsilon F$, depending on if there exists
$((\hat f, \hat g),(\hat f', \hat g')) \in \th \cap X^2$ with $(\hat f, \hat f') \not\in \Hg$ or $\hat f \ne \hat f'$.

Now assume that $i > 0*$, and let $\hat f \in Q_n$ be a transformation of rank $i-1$. Once again by Lemma \ref{lm:equal}, it follows that $\th'$, and therefore $\th$,
contains $((\hat f, g),(\hat f, g'))$. Let $X'$ be the dlock
containing $D_{i-1,j}$, then  $((\hat f, g),(\hat f, g')) \in \th \cap {X'}^2$. It follows that $X'$ must be of a type for which $\pi_2(\th \cap {X'}^2)$ is not the identity. The six
listed types in the statement of the theorem
are exactly those for which
 this condition is satisfied.

Now  $((\hat f, g),(\hat f, g'))= ((\hat f, g), (\hat f \cdot \id_{S_{i-1}}, g \cdot \sigma))\in \th$. In the fourth case, i.e.\  when $X'$ is a dlock of type $F \Hg$ with normal subgroup $\bar N$, 
we have that
$ \sigma \in \bar N$. Similarly in the fifth and sixth cases, we get that $(\id_{S_{i-1}},\sigma) \in \bar N$. As $\sigma $ generates $N'$ as a normal subgroup, the statements in the last three cases follow.
\end{proof}
We conclude that the dlocks of type $\varepsilon \Hg$ can be placed onto the ``west-facing'' slopes of the landscape made up of the dlocks of type  $FF$, $\Hg F$, or $\varepsilon F$. For any such slope the dlocks of type $\varepsilon \Hg$, must be ``staked on the top of each other'',
with the  initial $\varepsilon \Hg$-dlock being placed on either a ``step" of the $FF$-$\Hg F$-$\varepsilon F$-landscape or on a dlock of type $\Hg \Hg$ or $F\Hg$ (see Figure 5).
Symmetric statements hold for dlocks of type $\Hg \varepsilon$.

\begin{figure}[h]
\[
\xy
%picture
(20,30)*{}="1";
(15,35)*{}="2";
(10,40)*{}="3";
(5,45)*{}="4";
(0,50)*{}="5";
(10,60)*{}="6";
(15,55)*{}="7";
(15,65)*{}="8";
(20,60)*{}="9";
(25,65)*{}="10";
(30,60)*{}="11";
(35,55)*{}="12";
(40,50)*{}="13";
(35,45)*{}="14";
(30,50)*{}="15";
(25,55)*{}="16";
(50,50)*{}="17";
(45,55)*{}="18";
(55,55)*{}="19";
(60,40)*{}="20";
(55,45)*{}="21";
(60,50)*{}="22";
(55,-5)*{}="O";
(65,55)*{}="23";
(65,45)*{}="24";
(70,50)*{}="25";
(75,55)*{}="26";
(85,55)*{}="29";
(75,45)*{}="27";
(80,50)*{}="28";
(85,45)*{}="30";
(90,50)*{}="31";
(95,55)*{}="32";
(95,35)*{}="33";
(100,40)*{}="34";
(105,45)*{}="35";
"O";"5"**\dir{-};
"5";"8"**\dir{-};
"8";"14"**\dir{-};
"1";"18"**\dir{-};
"18";"20"**\dir{-};
"17";"19"**\dir{-};
"19";"24"**\dir{-};
"21";"23"**\dir{-};
"20";"26"**\dir{-};
"28";"29"**\dir{-};
"31";"29"**\dir{-};
"26";"33"**\dir{-};
"31";"34"**\dir{-};
"32";"35"**\dir{-};
"30";"33"**\dir{-};
"30";"32"**\dir{-};
"O";"35"**\dir{-};
"25";"27"**\dir{-};
"27";"28"**\dir{-};
"23";"25"**\dir{-};
"15";"12"**\dir{-};
"12";"13"**\dir{-};
"16";"11"**\dir{-};
"11";"12"**\dir{-};
"9";"10"**\dir{-};
"11";"10"**\dir{-};
"6";"7"**\dir{-};
"9";"7"**\dir{-};
"4";"10"**\dir{-};
"3";"11"**\dir{-};
"2";"12"**\dir{-};
%repeat to improve thickness
%letters
(58,20)*{\mbox{\textit{{\Huge FF}}}};
(10,55)*{\varepsilon  F};
(15,50)*{\varepsilon  F};
(20,45)*{\varepsilon  F};
(15,60)*{{ \varepsilon}{\mathcal H}};
(25,60)*{{ \varepsilon}{\mathcal H}};
(30,55)*{{ \varepsilon}{\mathcal H}};
(35,50)*{{ \varepsilon}{\mathcal H}};
(25,40)*{{\mathcal H}  F};
(60,45)*{{\mathcal H}{\mathcal H}};
(75,50)*{{\mathcal H}{ \varepsilon}};
(85,50)*{{\mathcal H}{\mathcal H}};
(55,50)*{{ \varepsilon}{\mathcal H}};
(65,50)*{{\mathcal H}{ \varepsilon}};
(92,43)*{F{\mathcal H} };
(96,48)*{F \varepsilon};
(0,50)*{};
(20,70)*{};
(25,65)*{};
(30,60)*{};
%grid
%side a
(55,-5)*{}="aO";
(0,50)*{}="a1";
(5,45)*{}="a2";
(10,40)*{}="a3";
(15,35)*{}="a4";
(20,30)*{}="a5";
(25,25)*{}="a6";
(30,20)*{}="a7";
(35,15)*{}="a8";
(40,10)*{}="a9";
(45,5)*{}="a10";
(50,0)*{}="a11";
%side b
(60,0)*{}="b3";
(65,5)*{}="b4";
(70,10)*{}="b5";
(75,15)*{}="b6";
(80,20)*{}="b7";
(85,25)*{}="b8";
(90,30)*{}="b9";
(95,35)*{}="b10";
(100,40)*{}="b11";
(105,45)*{}="b12";
%side c
(0,50)*{}="c1";
(5,55)*{}="c2";
(10,60)*{}="c3";
(15,65)*{}="c4";
(20,70)*{}="c5";
(25,75)*{}="c6";
(30,80)*{}="c7";
(35,85)*{}="c8";
(40,90)*{}="c9";
(45,95)*{}="c10";
(50,100)*{}="c11";
%side d
(50,100)*{}="d1";
(55,95)*{}="d2";
(60,90)*{}="d3";
(65,85)*{}="d4";
(70,80)*{}="d5";
(75,75)*{}="d6";
(80,70)*{}="d7";
(85,65)*{}="d8";
(90,60)*{}="d9";
(95,55)*{}="d10";
(100,50)*{}="d11";
%lines a-d
"a1";"d1"**\dir{.};
"a2";"d2"**\dir{.};
"a3";"d3"**\dir{.};
"a4";"d4"**\dir{.};
"a5";"d5"**\dir{.};
"a6";"d6"**\dir{.};
"a7";"d7"**\dir{.};
"a8";"d8"**\dir{.};
"a9";"d9"**\dir{.};
"a10";"d10"**\dir{.};
"a11";"d11"**\dir{.};
%lines c-b
"aO";"c1"**\dir{.};
"b3";"c2"**\dir{.};
"b4";"c3"**\dir{.};
"b5";"c4"**\dir{.};
"b6";"c5"**\dir{.};
"b7";"c6"**\dir{.};
"b8";"c7"**\dir{.};
"b9";"c8"**\dir{.};
"b10";"c9"**\dir{.};
"b11";"c10"**\dir{.};
"aO";"b12"**\dir{.};
"c11";"b12"**\dir{.};
\endxy
\]
\caption{A possible configuration for dlocks of type $FF$, $\Hg F$,  $F \Hg$, $\varepsilon F$, $F \varepsilon$, $\Hg \Hg$, $\varepsilon \Hg$, and $\Hg \varepsilon$}\label{He}
\end{figure}

We will not derive any additional conditions for dlocks of type $\varepsilon \varepsilon$, so we may use them to fill out the remaining ``spaces'' in our landscape without violating any conclusion achieved so far.

The results of this section give us tight constraint about the structure of any congruence $\th$ on $Q_m \times Q_n$. In our next theorem we will state that all the conditions we have derived so far are
in fact sufficient to define a congruence.

\begin{thm}\label{mainQ} Let $Q \in\{\T, \PT,\In\}$ and assume that $Q_m, Q_n$ are non-trivial. Suppose that we are given a partition $\mathcal{P}$ of $Q_m \times Q_n$ that preserves $\Dg$-classes and that to each part $B$ of $\mathcal{P}$, we associate a type from $\{F,\Hg,\varepsilon\}^2$ and, if the type of $B$ differs from $FF$,
 a group $N_B$. Suppose further that the following conditions are met:
   \begin{enumerate}
   \item The partition $\mathcal{P}$ has at most one part $B$ of type $FF$, and if this is the case, then $B=D_P$, where $P$ is a downward-closed subset of
   $\{0*,\dots, m\} \times \{0*, \dots,n\}$ such that
    \begin{enumerate}\item$P\ne \{0\} \times \{0,\dots,j\}$ for all $j\in \{0, \dots, n\}$;
   \item $P\ne \{0,\dots, i\}\times \{0\}$ for all $i \in \{0, \dots, m\}$.
   \end{enumerate}
\item If $B$ is a part of type $\Hg F$ or $\varepsilon F$, then $B=D_P$ where $P$ is of the form  $\{i\} \times \{0*, \dots, j\}$ for some $0* \le i\le m$, $1 \le j \le n$, and
$N_B \unlhd S_i$. Moreover \label{c:HFepsF shape}
          \begin{enumerate}
\item If $B$ has type $\Hg F$, then $i \ge 2$, and $N_B \ne \varepsilon_i$;
\item If $B$ has type $\varepsilon F$, then $N_B = \varepsilon_i$.
\end{enumerate}
\item The dual of condition (\ref{c:HFepsF shape}) holds for $B$ of type $F \Hg$ and $F \varepsilon$.
\item $\mathcal{P}$ has at most one part of type $\Hg F$. If $D_P$, with $P= \{i\} \times \{0*, \dots, j\}$, is such a part, then $\mP$ has a part $B'$ of type $FF$, such that
$D_{P'} \subseteq B'$, where $P'= \{0*,\dots,i-1\} \times \{0*, \dots, j\}$. \label{c:HF loc}
\item The dual of condition (\ref{c:HF loc}) holds for $B$ of type $F \Hg$.
\item Let $J$ be the set of values $(i,j_i)$ such that $\{i\} \times \{0*, \dots,j_i\}$ is the index set of the parts of $\mP$ having type $\varepsilon F$.
Then $\pi_1(J)$ is a
set of consecutive integers (possibly empty), and the values $j_i$ are non-increasing in $i$.

Moreover, if $\pi(J)$ is non-empty and the smallest value $i'$ of $\pi_1(J)$ is larger then $0*$, then $\mP$ has a part $B'$ of type $\Hg F$ or $FF$, such that
$D_{P'} \subseteq B'$, where $P'= \{i'-1\} \times \{0*, \dots, j_{i'}\}$. \label{c:epsF loc}
\item The dual of condition (\ref{c:epsF loc}) holds for the set of $\mP$-parts of type $F \varepsilon$.
\item If $B$ is a part of type $\Hg \Hg$, $\varepsilon \Hg$, $\Hg \varepsilon$, or $\varepsilon \varepsilon$ then $B=D_{i,j}$ for some $0* \le i \le m, 0* \le j \le n$, and
$N_B \unlhd S_i \times S_j$.
Moreover,
\begin{enumerate}
\item[(a)] if $B$ is of type $\Hg \Hg$, then $i \ge 2$, $j\ge 2$, and $\pi_1(N_B) \ne \varepsilon_i, \pi_2(N_B) \ne \varepsilon_j$;
\item[(b)]  if $B$ is of type $\varepsilon \Hg$, then $j \ge 2$, and  $N_B= \varepsilon_i \times N'$ for some $N' \ne \varepsilon_j$;
\item[(c)]  if $B$ is of type $\Hg \varepsilon$,  then $i \ge 2$, and  $N_B= N' \times\varepsilon_j$ for some $N' \ne \varepsilon_i$;
\item[(d)]  if $B$ is of type $\varepsilon \varepsilon$, then $ N_B= \varepsilon_i \times \varepsilon_j$.
\end{enumerate}
\item  If $B=D_{i,j}$ is a part of type $\Hg \Hg$, let $B'$ be the part containing $D_{i,j-1}$. Then $B'$ is either of type $FF$ or of  type $\Hg F$ and
$\pi_1(N_B) \subseteq N_{B'}$. The dual condition holds for the part $B''$ containing $D_{i-1,j}$.
\item Let $B=D_{i,j}$ be a part of type $\varepsilon \Hg$ with $N_B=\varepsilon_i \times N'$. Let $B'$ be the part containing $D_{i,j-1}$. Then $B'$ is of type $FF$, $\Hg F$, or $\varepsilon F$. \label{c:epsH loc}

Moreover, if $i >0*$, the part $B''$ containing $D_{i-1,j}$ satisfies one of the following conditions:
\begin{enumerate}
\item $B''$ is of type $FF$;
\item  $B''$ is of type $\Hg F$;
\item  $B''$ is of type $\varepsilon F$;
\item  $B''$ is of type $F \Hg$  and  $N' \subseteq N_{B''}$;
\item $B''$ is of type $\Hg \Hg$ and $\varepsilon_{i-1} \times N' \subseteq  N_{B''}$;
\item $B''$ is of type $\varepsilon \Hg$ and $\varepsilon_{i-1} \times N' \subseteq  N_{B''}$.
\end{enumerate}
\item The dual of condition (\ref{c:epsH loc}) holds for the $\mP$-parts of type $H \varepsilon$.
\end{enumerate}
Suppose that on each $\mP$-part $B$ we define a binary relation $\th_B$ as follows:
\begin{enumerate}
\item[(i)] If $B$ has type $FF$ let $\th_B=B^2$;
\item[(ii)] If $B$ has type $\Hg F$ or $\varepsilon F$, let $(f,g) \th_B (f',g')$ if and only if $f \Hg f'$ and $ f'=f\cdot \sigma$ for some $ \sigma \in N_B$;
\item[(iii)] If $B$ has type $F \Hg$ or $F \varepsilon$, let $(f,g) \th_B (f',g')$ if and only if $g \Hg g'$ and $ g'=g\cdot \sigma$ for some $ \sigma \in N_B$;
\item[(iv)] If $B$ has type $\Hg \Hg$, $\varepsilon \Hg$, $\Hg \varepsilon$, or  $\varepsilon \varepsilon$, let $(f,g) \th_B (f',g')$ if and only if $f \Hg f'$, $g \Hg g'$ and
$ (f',g')=(f \cdot \sigma, g\cdot \tau)$ for some $ (\sigma,\tau) \in N_B$.
\end{enumerate}
Let $\th=\cup_{B \in \mP}\, \th_B$. Then $\th$ is a congruence on $Q_m \times Q_n$. 

Conversely, every congruence  on $Q_m \times Q_n$ can be obtained in this way.
\end{thm}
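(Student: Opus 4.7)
My plan has two directions. The necessity direction---that every congruence $\th$ on $Q_m\times Q_n$ arises from such a partition---is essentially the cumulative content of the preceding lemmas. Given $\th$, take $\mP$ to be the partition into $\th$-dlocks, assign types by definition, and associate to each non-$FF$ dlock its normal subgroup as in Lemma \ref{lm:t2l3l} or Lemma \ref{lm:t456}. Each of the conditions (1)--(11) is then a restatement of one of the Lemmas \ref{lm:t1}--\ref{th:5l} or its symmetric dual; condition (1) is Lemma \ref{lm:t1}, conditions (2),(3) come from Lemma \ref{lm:t2l3l} together with its dual, conditions (4),(5) from Lemma \ref{lm:t2l}, conditions (6),(7) from Lemma \ref{lm:t3lb}, condition (8) from Lemma \ref{lm:t456}, condition (9) from Lemma \ref{lm:t4}, and conditions (10),(11) from Lemma \ref{th:5l} and its dual.

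For the sufficiency direction, I first check that $\th$ is an equivalence relation. Within each part $B$, the relation $\th_B$ is reflexive because $\id\in N_B$, symmetric because $N_B$ is closed under inverses, and transitive because $N_B$ is closed under products and because $\cdot$ is a group action. Since the parts $B$ are disjoint, $\th=\bigcup_B\th_B$ inherits these properties.

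The main step is compatibility. My strategy is to realize $\th$ as the join of principal congruences: for each part $B$ with $\th_B$ non-diagonal, choose a representative pair $(x_B,y_B)\in\th_B$ and let $\th^B$ denote the principal congruence it generates in $Q_m\times Q_n$; set $\bar\th=\bigvee_B\th^B$. Joins of congruences are congruences, so it suffices to show $\th=\bar\th$. The inclusion $\th\subseteq\bar\th$ is proved by showing, for each type of $B$, that every pair in $\th_B$ can be obtained from $(x_B,y_B)$ alone via the principal-congruence description (Lemma \ref{lm:equal}, Corollaries \ref{c:equal} and \ref{c:1h}, Theorems \ref{th:0h}, \ref{th:1h} and \ref{th:2h})---this follows from the transitive action of $N_B$ on the relevant $\Hg$-classes, together with ideal-absorption on lower ranks where relevant. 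The reverse inclusion $\bar\th\subseteq\th$ requires checking that each $\th^B\subseteq\th$, i.e.\ that every pair produced by the principal congruence description applied to $(x_B,y_B)$ lies in some $\th_{B'}$.

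The chief obstacle is this last containment $\th^B\subseteq\th$. By the principal congruence theorems, $\th^B$ can produce pairs outside $B$ in dlocks $B'$ of lower rank, and the conditions in the theorem must guarantee that each such pair actually lies in $\th_{B'}$. This is exactly what the structural conditions encode: the downward-closed shape of $FF$-dlocks (condition (1)) absorbs any pair with sufficiently reduced ranks; the shape restrictions on $\Hg F$-, $\varepsilon F$-, $F\Hg$-, $F\varepsilon$-dlocks (conditions (2)--(7)) align them with the ``slopes'' adjacent to the $FF$-dlock so that rank-reductions from $B$ land cleanly in an appropriate $B'$; and the normal-subgroup inclusions $\pi_1(N_B)\subseteq N_{B'}$, $\varepsilon_{i-1}\times N'\subseteq N_{B''}$, etc.\ (conditions (9)--(11)) ensure that the $\Hg$-coordinate action survives transition to a neighbouring dlock. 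The verification is a substantial but mechanical case analysis, mirroring in reverse the work already done in Lemmas \ref{lm:t1}--\ref{th:5l}; carrying it out dlock-type by dlock-type and invoking the appropriate principal-congruence formula in each case completes the proof.
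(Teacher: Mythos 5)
Your overall strategy coincides with the paper's: necessity is exactly the cumulative content of Lemmas \ref{lm:t1}--\ref{th:5l} and their duals (your matching of conditions (1)--(11) to the individual lemmas is accurate), and sufficiency is reduced to the principal-congruence results of Section \ref{s:principal}, with the resulting case analysis left as a mechanical verification --- which is also all the paper does. The reduction itself is sound in principle: since $\th$ is an equivalence relation, the congruence it generates is the transitive closure of the set of translates $(xay,xby)$ with $(a,b)\in\th$, each such translate lies in the principal congruence generated by $(a,b)$, and transitivity of $\th$ then closes the argument once each of those principal congruences is shown to sit inside $\th$.

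There is, however, a genuine gap in your inclusion $\th\subseteq\bar\th$. You pick a \emph{single} representative pair $(x_B,y_B)$ per dlock and claim that every pair of $\th_B$ is recovered from it via the principal-congruence description, ``by the transitive action of $N_B$.'' But by Theorem \ref{th:2h}, the principal congruence generated by one pair in a dlock of type $\Hg\Hg$ realises only the normal closure in $S_i\times S_j$ of a single element $(\sigma_1,\sigma_2)$, and not every admissible $N_B$ is normally generated by one element. For instance $N_B=S_2\times S_2$ (with $i=j=2$) satisfies condition (8)(a), yet the normal closures of its individual elements are only the trivial group, the two factors, and the diagonal; so for no choice of $(x_B,y_B)$ does $\th_B\subseteq\th^B$ hold, and the identity $\th=\bar\th$ fails as stated. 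The repair is immediate and is what the paper's proof implicitly does: take the join over \emph{all} pairs of $\th$ rather than one per block --- equivalently, verify for every $((f,g),(f',g'))\in\th$ that the principal congruence it generates is contained in $\th$. With that change (and with your verification that $\th$ is an equivalence relation), the rest of your outline, including the role of the structural conditions in absorbing the lower-rank pairs produced by the principal congruences, matches the intended argument.
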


%\begin{thm}
%Suppose that we are given a partition of $Q_m \times Q_n$ that preserves $\Dg$-classes and that to each part, we associate a dlock-type as well as a corresponding normal subgroup, where applicable.
%Assume further that all our choices are consistent with the results of this section, namely Lemmas \ref{lm:t1} to \ref{th:5l},
%about the dlocks of a congruence and their normal subgroups.
%
%If we define $\th$ on each part according to our structure results for a dlock of the
%appropriate type (e.g. for dlocks of type $\Hg \Hg$, $\varepsilon \Hg$, $\Hg \varepsilon$, or $\varepsilon \varepsilon$ use  (\ref{dlock456})), then $\th$ is a congruence on
%$Q_m \times Q_n$.
%\end{thm}

\begin{proof} The ``converse" part of this last theorem follows from Lemmas \ref{lm:t1} to \ref{th:5l} and, where applicable, their dual versions.
To show that $\th$ is a congruence involves checking for each $((f,g),(f',g'))\in \th$, that the principal congruence generated by  $((f,g),(f',g'))$ is contained in $\th$, using our results on principal
congruences from Section \ref{s:principal}. The proof
is straightforward, but tedious due to the many different cases to be considered, and is left to the reader.
\end{proof}
\begin{obs}\label{obs}
We remark that Theorem \ref{mainQ} also holds for the more general case of semigroups of the  form $Q_m\times P_n$, where $Q,P\in \{\PT,\T,\In\}$, provided that the expression $0*$ is interpreted in the context of the relevant factor. In fact, nearly all our results and proofs carry over to the case of $Q_m\times P_n$ without any other adjustments. The exceptions are  Lemma \ref{lem:ideals}, Theorem \ref{th:0h}, and Theorem \ref{th:2h}, which require simple and straightforward modifications. 
\end{obs}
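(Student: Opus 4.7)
The plan is to re-examine each step of Sections \ref{s:principal} and \ref{s:allcong} with $Q_n$ replaced by $P_n$, keeping in mind that $0*$ must be read factor-by-factor: writing $0*_Q=1$ if $Q=\T$ and $0*_Q=0$ otherwise (and likewise $0*_P$), the smallest ideal of $Q_m\times P_n$ becomes $I^{(m)}_{0*_Q}\times I^{(n)}_{0*_P}$. The vast majority of arguments use only (a) the common description of Green's relations on $Q_m$ and on $P_n$ given by the introductory lemma, (b) the Malcev--\v{S}utov--Liber congruence classification (Theorem \ref{th:congrQn}), which is uniform across the three families, and (c) the right action of $S_{|f|}$ on each $\Hg$-class, which is also defined uniformly. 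Consequently Lemmas \ref{lem:single}, \ref{lem:ideals1}, \ref{lem:Icont}, \ref{lm:equal} and their duals, Corollaries \ref{c:equal} and \ref{c:1h}, and Theorem \ref{th:1h} transcribe to $Q_m\times P_n$ without any change to the statements or proofs.

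The first genuine adjustment is to Lemma \ref{lem:ideals}. The clean dichotomy ``has a zero / does not'' splits into three sub-cases according to whether $0*_Q$ and $0*_P$ are $0$ or $1$. If both are $0$, the element $(0,0)$ is a zero of $Q_m\times P_n$ and its class is the unique ideal class. If exactly one factor is $\T$, say $Q=\T$, so $0*_Q=1$ and $0*_P=0$, then the proof of part (2) needs to be applied only on the first coordinate: we require that $\pi_1(\th)$ be non-trivial to conclude that $I_1^{(m)}\times\{0\}$ sits inside a single $\th$-class, which is then easily checked to be an ideal. If both factors are $\T$ the original proof of (2) goes through verbatim. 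Part (3) is insensitive to the factors.

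The second adjustment is in Theorem \ref{th:0h}. One chooses the auxiliary elements independently per factor: $z$ is the empty map when $Q\in\{\PT,\In\}$ and a constant $c_a\in\T_m$ otherwise, and $z'$ is chosen analogously from $P_n$. With this choice $(z,z')$ lies in the smallest ideal $I^{(m)}_{0*_Q}\times I^{(n)}_{0*_P}$; the rest of the argument, which only uses Lemma \ref{lem:Icont} and the dual of Lemma \ref{lem:single} separately on each coordinate, is untouched, and the statement becomes that $\th$ is the Rees congruence defined by $I^{(m)}_i\times I^{(n)}_k\cup I^{(m)}_j\times I^{(n)}_l$. The analogous remark applies to the corollary.

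The third adjustment is in Theorem \ref{th:2h}, where the argument that reduces the rank of $f$ by one splits on whether $Q\in\{\T,\PT\}$ (use a total map $h$ collapsing two kernel classes) or $Q=\In$ (use a partial identity removing one image point). In the mixed setting we simply apply the appropriate construction to whichever of $Q$ or $P$ is involved in the coordinate being reduced, which is harmless since the two coordinates are modified independently. The rest of the proof — the reduction to the idempotent case by units, the isomorphism between the group $\Hg$-class and $S_i\times S_k$, and the conjugation argument that lifts $(\sigma_1,\sigma_2)$ to $(\omega_1,\omega_2)\in K$ — involves only group-theoretic facts about $S_i\times S_k$ and properties of the action $\cdot$ that hold in all three monoids. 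With these three modifications in hand, all of the dlock analysis of Section \ref{s:allcong} (Lemmas \ref{lm:t1}--\ref{th:5l} and Theorem \ref{mainQ}) depends on the preceding results only as black boxes and so carries over with $0*$ interpreted per factor; the principal obstacle, and the only place where one has to be slightly careful, is keeping track of which of $0*_Q$ or $0*_P$ is in force at each occurrence of $0*$ in the statement of Theorem \ref{mainQ} and its proof.
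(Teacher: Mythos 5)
Your proposal is correct and matches the paper's own (unwritten) justification: you identify exactly the same three exceptional results --- Lemma \ref{lem:ideals}, Theorem \ref{th:0h}, and Theorem \ref{th:2h} --- and the modifications you supply (splitting the zero/constant dichotomy per factor, choosing $z$ and $z'$ independently in each coordinate, and applying the rank-reduction construction appropriate to each factor) are precisely the ``simple and straightforward modifications'' the paper has in mind. Your observation that the remaining results and the dlock analysis of Section \ref{s:allcong} use these as black boxes, with $0*$ read factor-by-factor, completes the argument in the intended way.
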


\section{Products of three transformation semigroups}\label{triples}

As said above, the results of this paper essentially solve the problem of describing the congruences of $Q_{n_1}\times Q_{n_2}\times Q_{n_3}\ldots\times Q_{n_k}$, the product of finitely many transformation semigroups of the types considered, although the resulting description of the congruences would require heavy statements and notation, but not much added value. To illustrate our point, we have included a series of figures that give an idea of how the dlock-structure of a triple product looks like. 

In the following figures, each $\Dg$-class is represented by a cube, and $\Dg$-classes belonging to the same dlock are combined into a colour-coded polytope.  
The figure is orientated so that the 
cube representing the $\Dg$-class of $D_{0*,0*,0*}$ is furthest away from the observer and obstructed from view.

To reduce the number of required colors, types that are obtained by a permutation of the coordinates have the same colour. Each figure adds the dlocks from one such colour group to the previous figure. For example, Figure \ref{3dFFF} contains one grey dlock of type $FFF$, while Figure \ref{3dFFH} adds three  red dlocks of types $FF\Hg$, $F\Hg F$ and $\Hg FF$. 

The following pairs of figures, from the dual and triple product case, can be considered to be in correspondence with each other: (Figure \ref{FF}, Figure \ref{3dFFF}), (Figure \ref{Fe}, Figure \ref{3dFee}), (Figure \ref{HH}, Figure \ref{3dHHH}), and (Figure \ref{He}, Figure \ref{3dHee}).  In the remaining cases, no such direct correspondence exist due to the extra dlock-types present in the three product case.

To obtain a final configuration from Figure \ref{3dHee}, one needs to fill out all remaining spaces with cubes that represent dlock-type $\varepsilon \varepsilon \varepsilon$. Put together, the figures demonstrate a large number, but not all, of the possible configuration of dlock-types.

\begin{figure}[!htb]
\begin{center}
\includegraphics[height=2in,width=3in,angle=0]{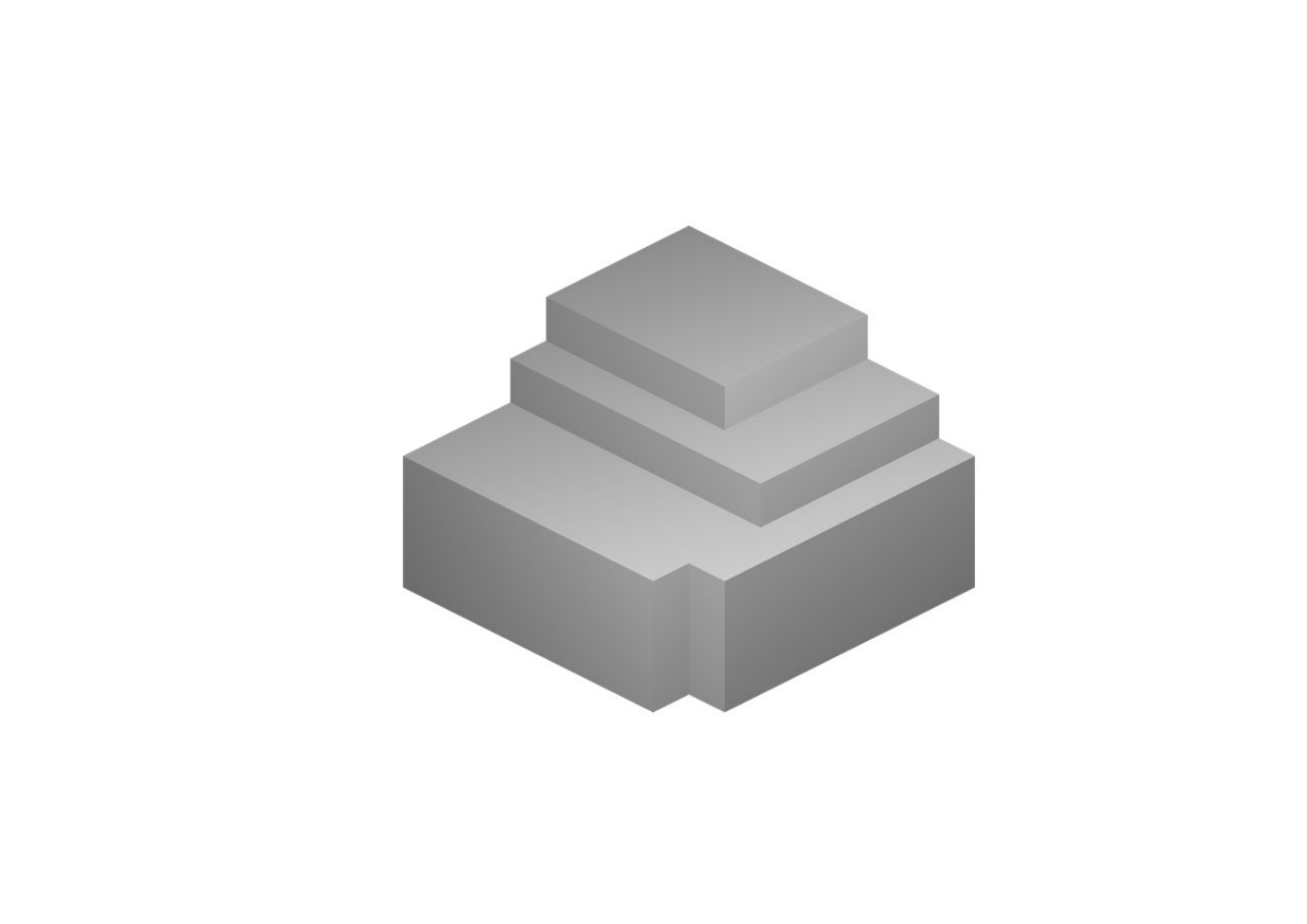}
\caption{$F,F,F$}\label{3dFFF}
\end{center}
\end{figure}

\begin{figure}[!htb]
\begin{center}
\includegraphics[height=2in,width=2in,angle=0]{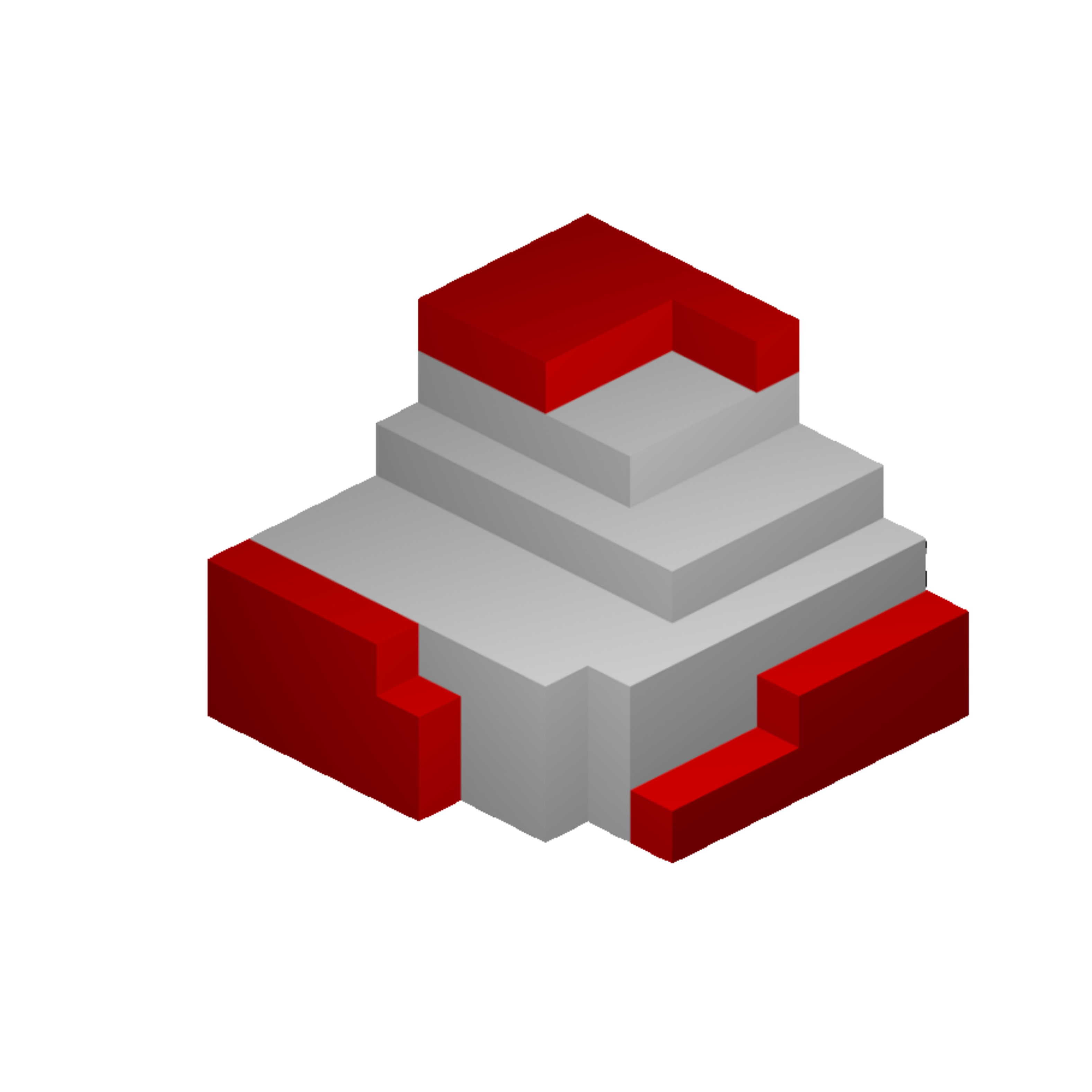}
\caption{$F,F,\Hg$}\label{3dFFH}
\end{center}
\end{figure}

\begin{figure}[!htb]
\begin{center}
\includegraphics[height=2in,width=2in,angle=0]{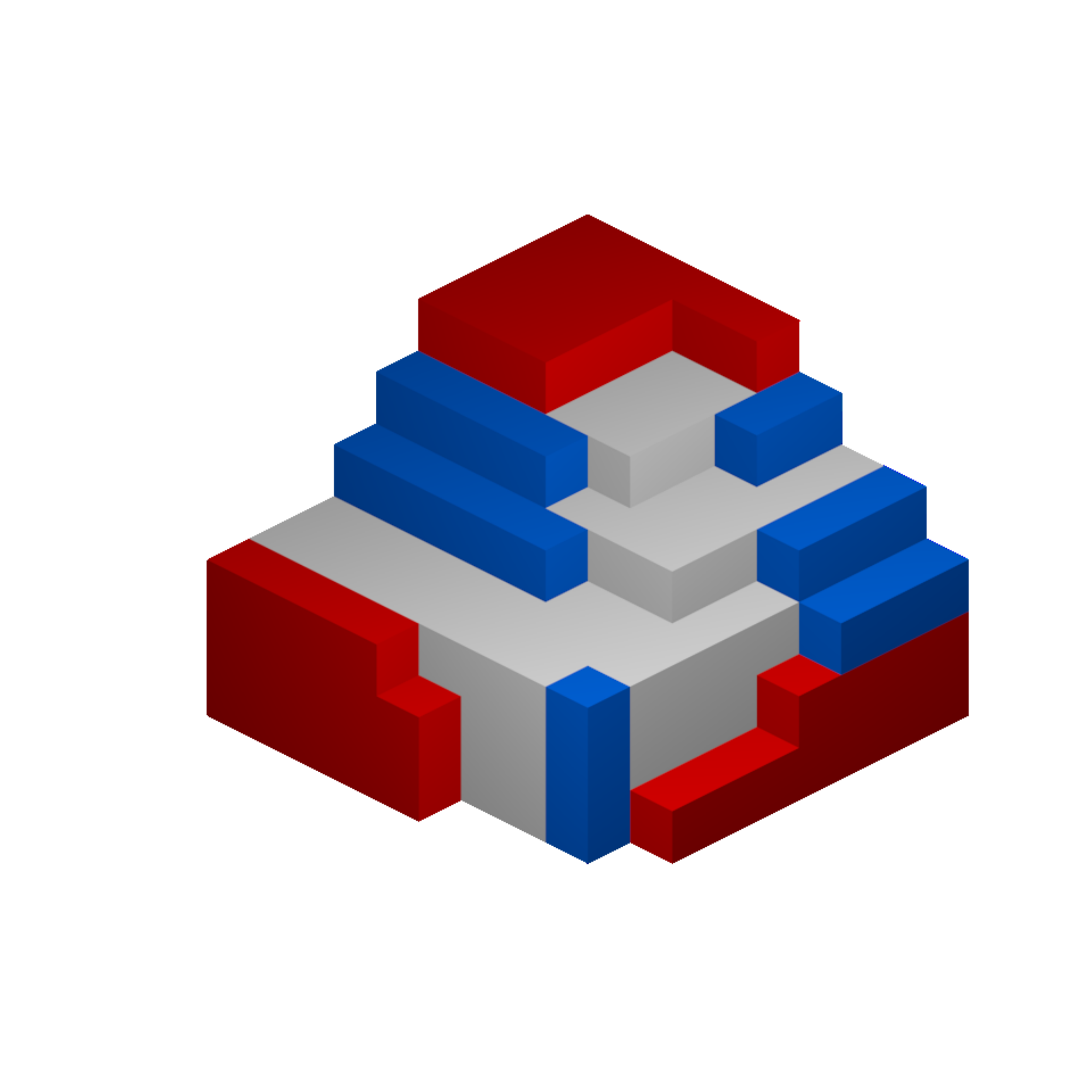}
\caption{$F,\Hg,\Hg$}\label{3dFHH}
\end{center}
\end{figure}

\begin{figure}[!htb]
\begin{center}
\includegraphics[height=2in,width=2in,angle=0]{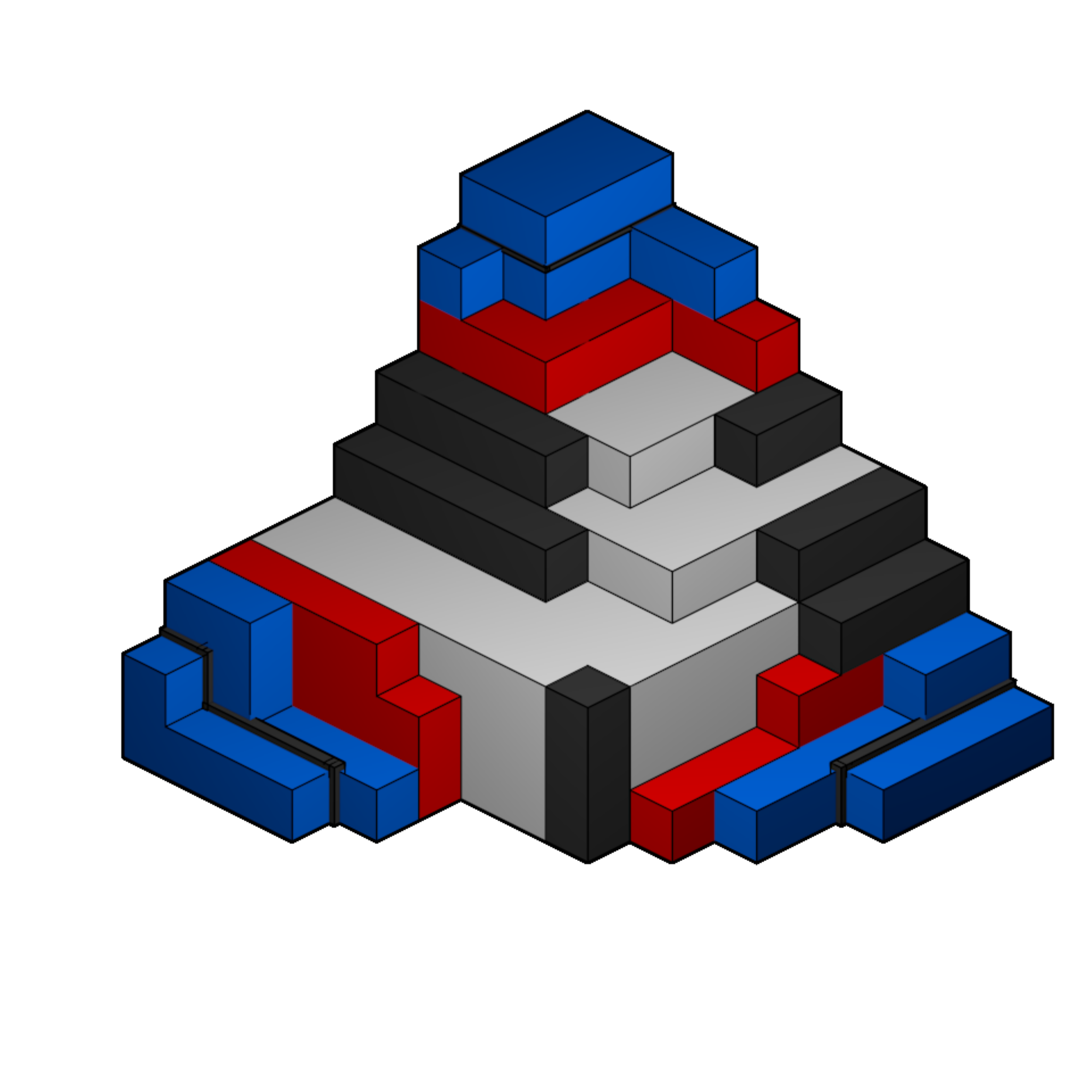}
\caption{$F,F,\varepsilon$}\label{3dFFe}
\end{center}
\end{figure}

\begin{figure}[!htb]
\begin{center}
\includegraphics[height=2in,width=2in,angle=0]{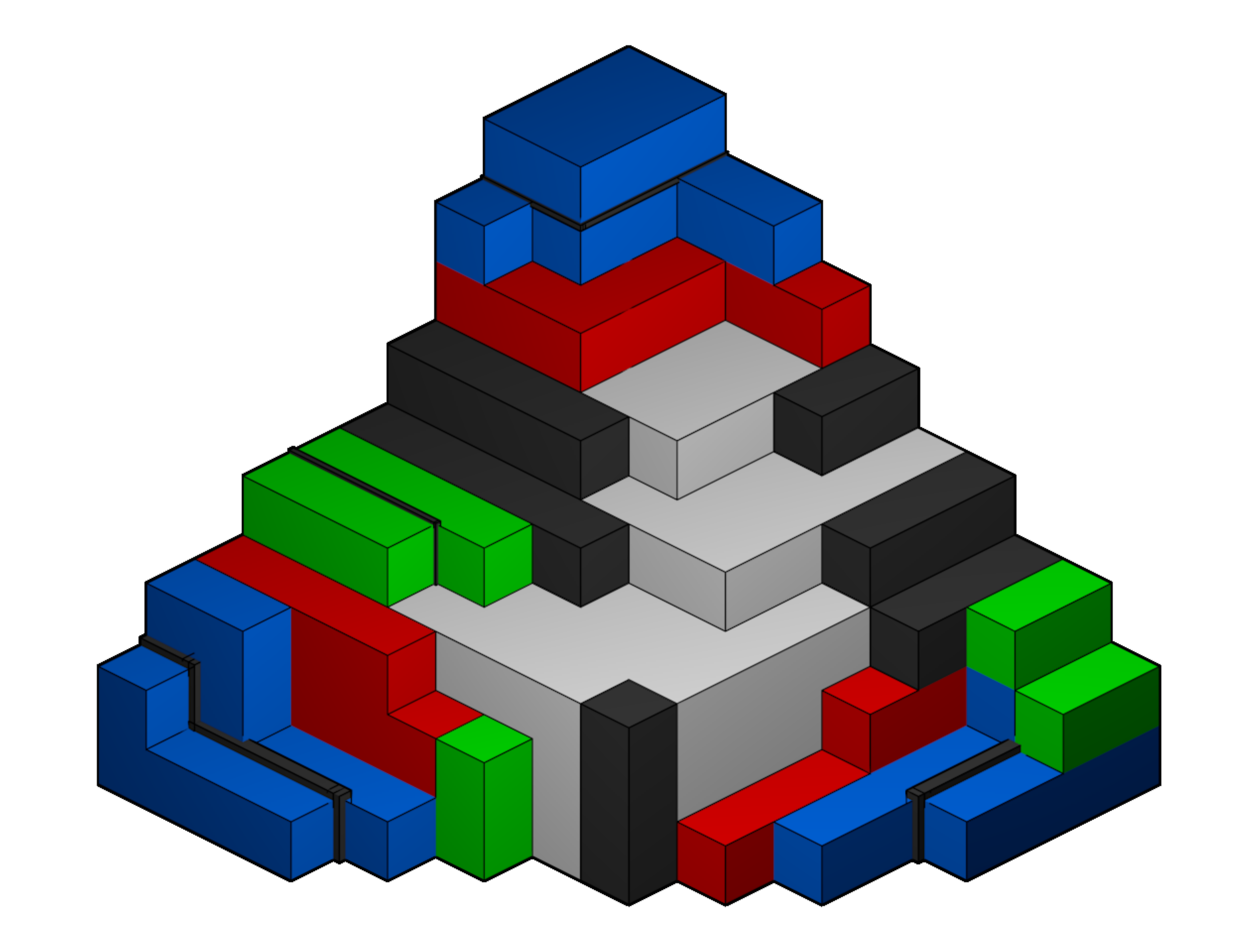}
\caption{$F,\Hg,\varepsilon$}\label{3dFHe}
\end{center}
\end{figure}

\begin{figure}[!htb]
\begin{center}
\includegraphics[height=2in,width=2in,angle=0]{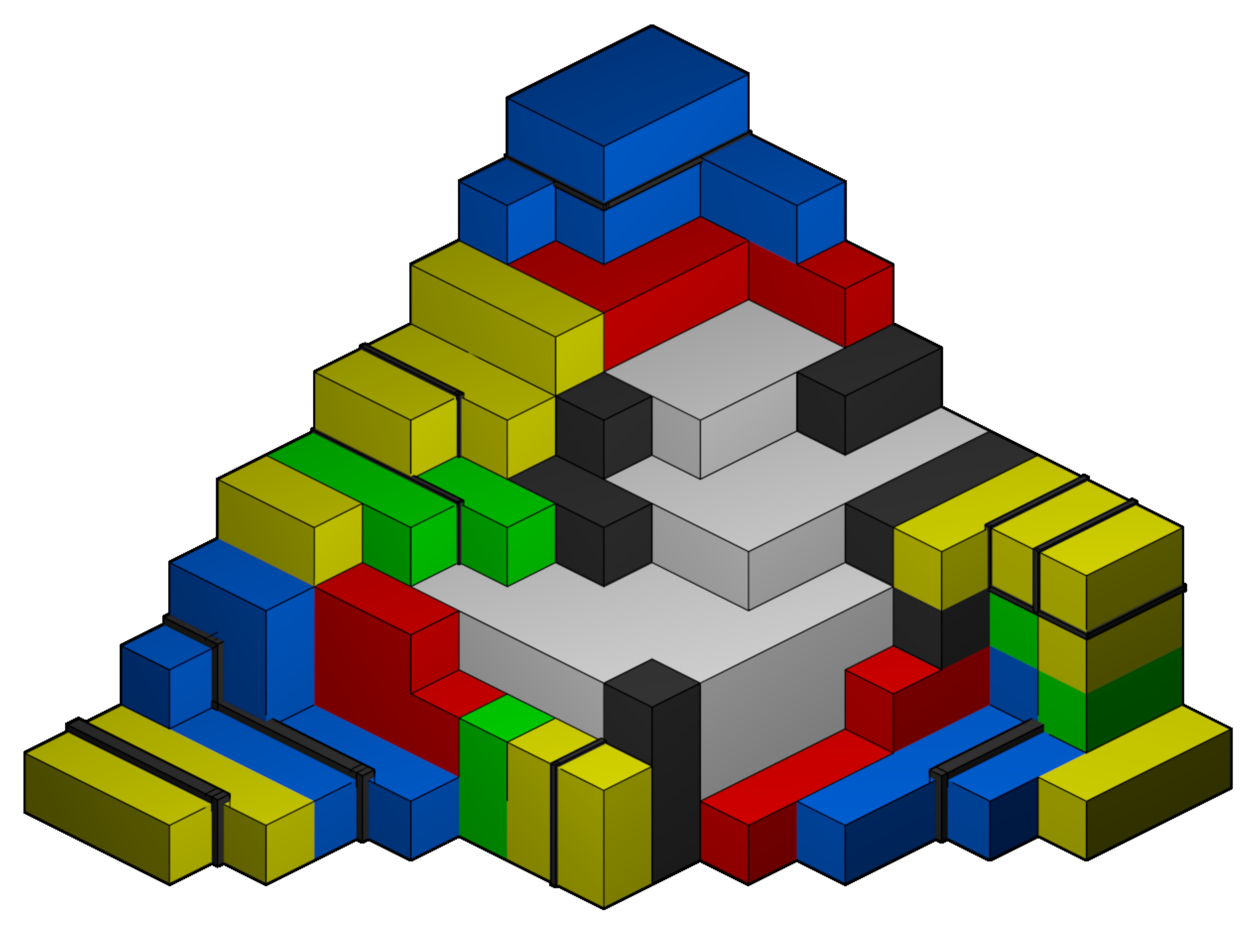}
\caption{$F,\varepsilon,\varepsilon$}\label{3dFee}
\end{center}
\end{figure}

\begin{figure}[!htb]
\begin{center}
\includegraphics[height=2in,width=2in,angle=0]{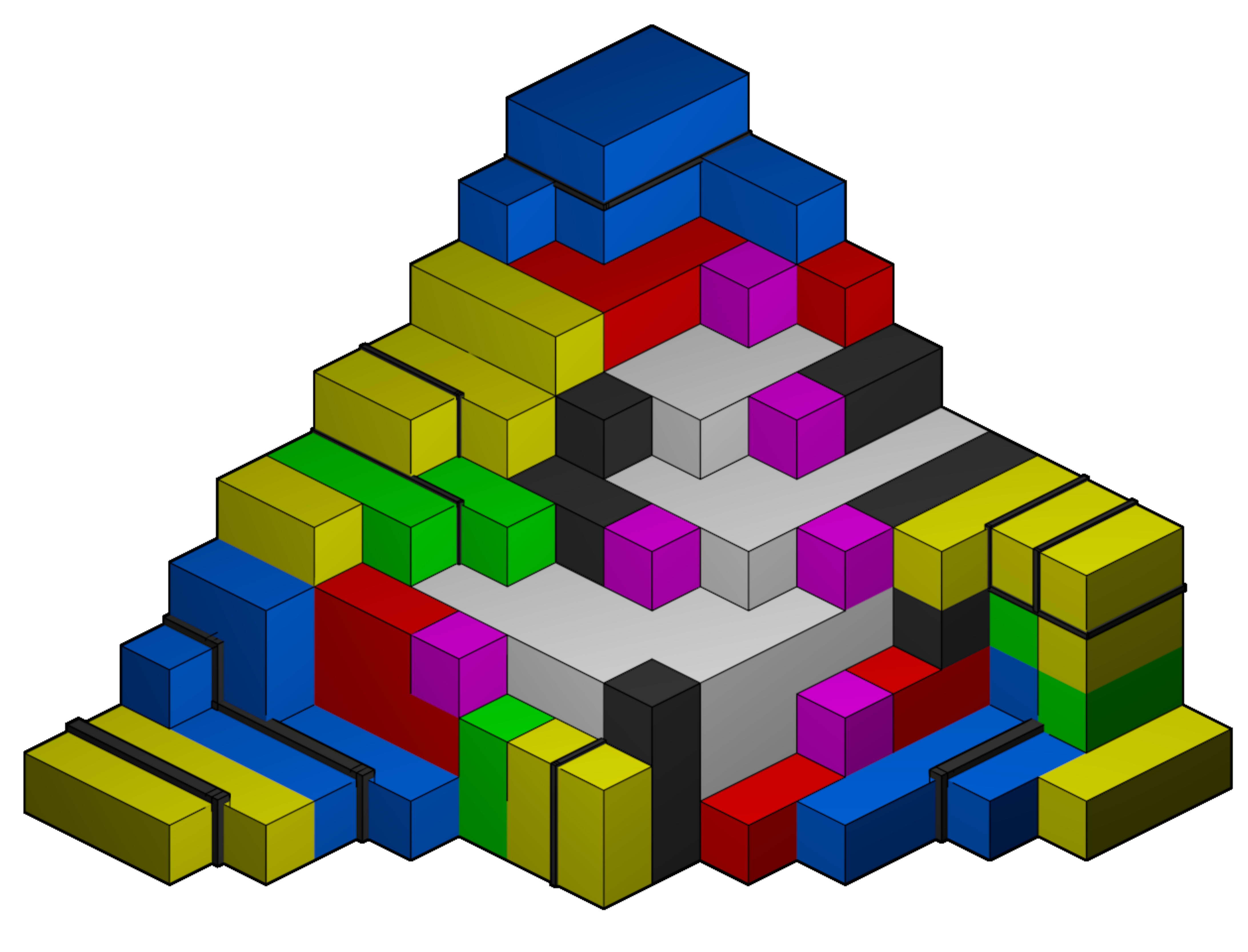}
\caption{$\Hg,\Hg,\Hg$}\label{3dHHH}
\end{center}
\end{figure}

\begin{figure}[!htb]
\begin{center}
\includegraphics[height=2in,width=2in,angle=0]{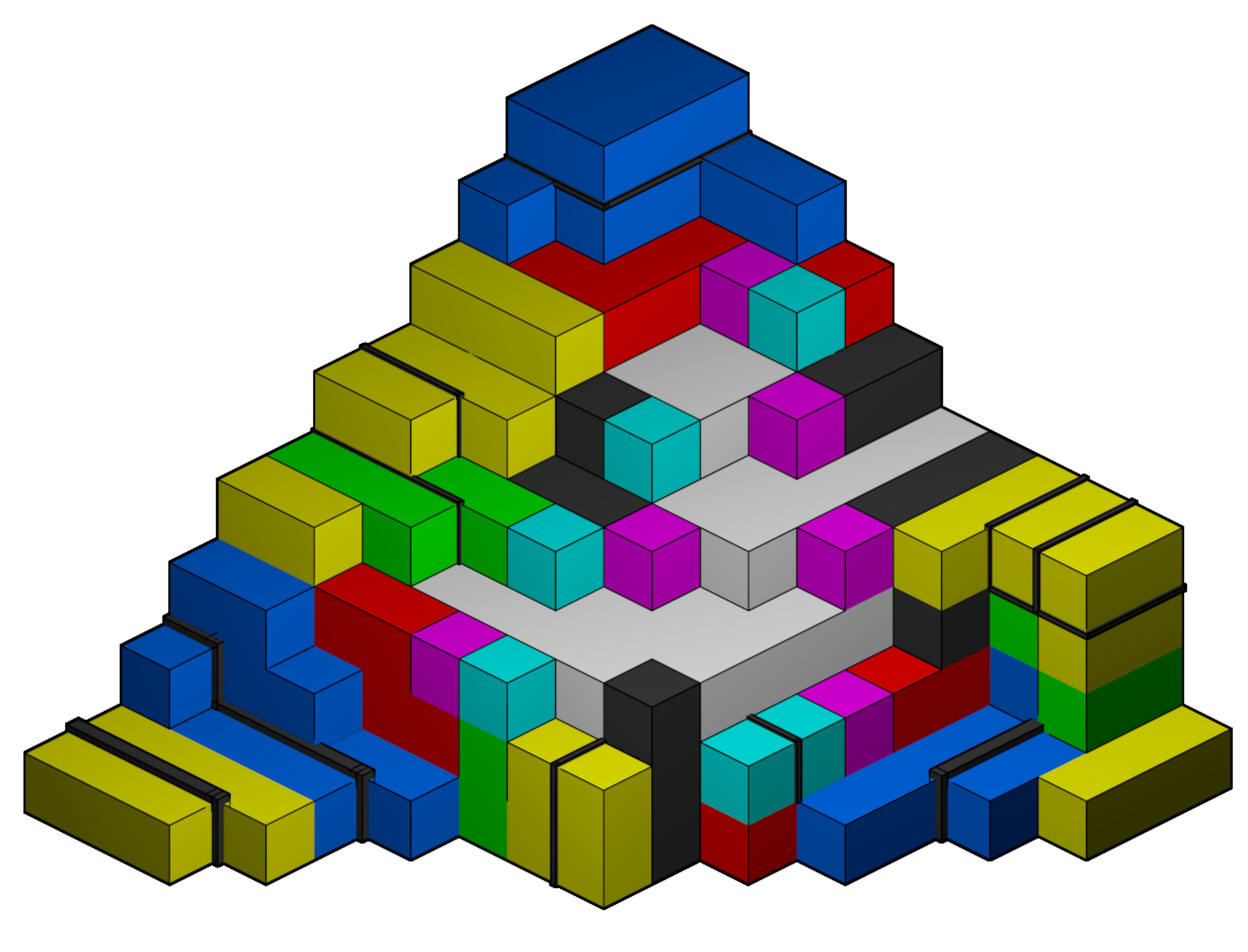}
\caption{$\Hg,\Hg,\varepsilon$}\label{3dHHe}
\end{center}
\end{figure}

\begin{figure}[!htb]
\begin{center}
\includegraphics[height=2in,width=2in,angle=0]{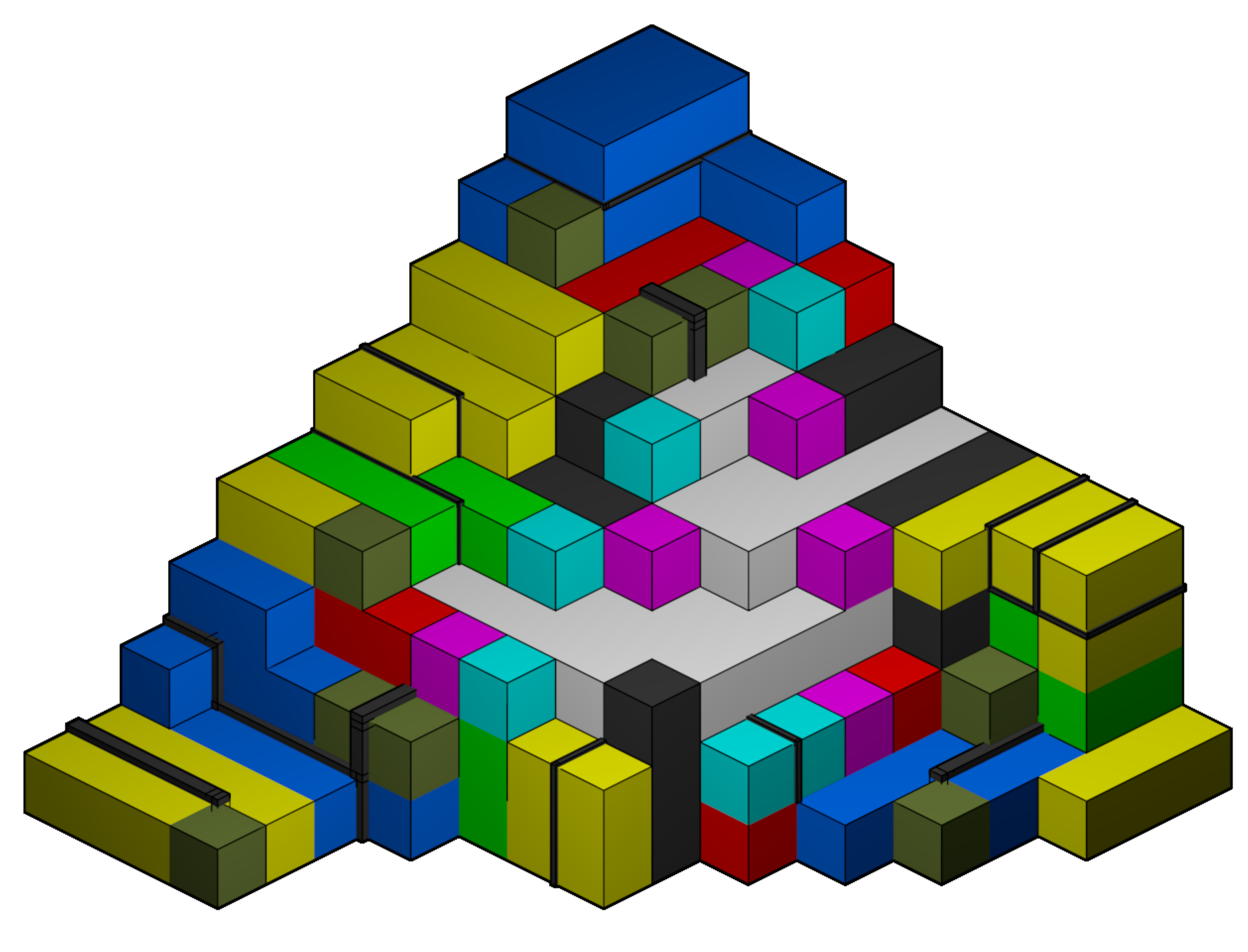}
\caption{$\Hg,\varepsilon,\varepsilon$}\label{3dHee}
\end{center}
\end{figure}

\section{Matrix Monoids}\label{s:matrix}
Let $F$ be a field with multiplicative unit group $F^*$. Consider the multiplicative monoid $F_n$ of all $n \times n$-matrices over $F$. We will, throughout, identify matrices with their induced (left) linear transformation on the vector space $F^n$. The rank, kernel, and image of a matrix are now defined with regard to their usual  meanings from linear algebra. Note that in particular the definition of kernel is now different from the notation of kernel used in the 
section on transformation semigroups. In addition, matrix multiplication corresponds to a composition of linear transformations that is left-to right, and hence inverse from the situation for transformation monoids.

 In this section, we will determine the principal congruences on the monoid $F_m \times F_n$. As it turns out, this case closely mirrors the situation of the semigroup $\PT_m \times \PT_n$. In many cases,  transferring 
the proofs of
the previous sections to our new setting requires only an adaptation of notation. In those cases, we will leave it to the reader to make the relevant changes. 

Other than notional changes, the main difference from the situation on $\PT_m \times \PT_n$ corresponds to the description of the congruence generated by  a pair
 of the form $((f,g),(f,g'))$.
For matrix monoids, this congruence properly relates to the congruence generated by some $((A,B),(\lambda A, B'))$, where $\lambda \in F^*$. Hence our description needs to be adapted to take care of the extra parameter $\lambda$.

We will start with by recalling several facts about the monoids $F_n$.

Recall that two matrices are $\Rg$-related if they have the same image, $\Lg$-related if they have the same kernel, $\Hg$-related if they have the same image and kernel, and
$\Dg$-related if they have the same rank.

We let $e_{i,j}$, for $ 1 \le i,j \le n$, be the elements of the standard linear basis of $F_n$ and set $E_i=e_{1,1}+e_{2,2}+\dots +e_{i,i}$. We identify the linear group $\gl(i,F)$ with the maximal subgroup of $F_n$ that 
contains $E_i$. Denote the identity matrix on $F_n$ by $1$ and the zero matrix by $0$. We have $1=E_n$, and we set $E_0=0$.

The description of the congruences of $F_n$ can be found in \cite{Malcev2}.
We will however use the following slightly different description from \cite{MatWeb}. While this is an unpublished source, the  two characterization only differ on condition 
%\ref{cold:differ} 
(b) of the following description, and it is an easy exercise to check that they are indeed equivalent.
\begin{thm}\label{th:congrFnmat}
A binary non-universal  relation $R$ on $F_n$ is a
congruence if and only if, there exists $\mu \in \{0, \dots, n\}$ and

\begin{enumerate}	
\item   there exists a normal subgroup $\bar G_\mu$  of $\gl(\mu,F)$;
\item  if $ \mu \le n-1$ there exist subgroups $G_{\mu+1}, G_{\mu+2}, \dots,
G_{n}$ of $F^*$
 such that $G_n \subseteq G_{n-1} \subseteq \dots \subseteq
G_{\mu+1}$ and
$G_{\mu+1} E_{\mu} \subseteq \bar G_\mu$;
\item
two matrices $A$ and $B$ are in $R$ if and only if one of the following
conditions holds:
\begin{enumerate}
       \item  $\rank A < \mu$ and $\rank B < \mu$;
       \item  $\rank A = \rank B= \mu$, and there exist $s_1,s_2 \in \gl(n,F)$ such that $s_1 A s_2$ and $s_1 B s_2$ are both in $\gl(\mu,F)$, and belong to the 
       same coset of $\gl(\mu,F)$ modulo $\bar G_\mu$;\label{cond:differ}
      \item  $\rank A= \rank B =i$, for some $\mu < i\le n$, and
$A=\lambda B$ for some $\lambda \in G_i$.

\end{enumerate}
\end{enumerate}	
\end{thm}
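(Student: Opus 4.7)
The plan is to derive this theorem from Malcev's original characterization of congruences on $F_n$ in \cite{Malcev2}, since as the paper notes the two descriptions differ only in the formulation of condition (b). I would not re-derive the parameterization of congruences in terms of the triple $(\mu, \bar G_\mu, (G_i)_{\mu < i \le n})$ but instead take this from Malcev's theorem; the only work is (i) to verify that the combined data indeed defines a congruence, and (ii) to verify the equivalence of condition (b) with Malcev's corresponding statement.

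For the ``if'' direction, I would check that the relation $R$ defined by (a), (b), (c) is an equivalence on $F_n$ and is compatible with multiplication. The equivalence part is immediate from $\bar G_\mu$ being a subgroup of $\gl(\mu,F)$ and each $G_i$ being a subgroup of $F^*$. Compatibility splits according to how left or right multiplication by an arbitrary $C \in F_n$ affects the rank: if the relevant rank is preserved, normality of $\bar G_\mu$ in $\gl(\mu, F)$ and the central nature of scalars in $G_i$ yield the conclusion; if the rank strictly drops, the nesting $G_n \subseteq \cdots \subseteq G_{\mu+1}$ together with the condition $G_{\mu+1} E_\mu \subseteq \bar G_\mu$ guarantee that a scalar-type relation at higher rank descends consistently, with any pair whose ranks fall below $\mu$ being absorbed into the Rees class of matrices of rank less than $\mu$.

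For the ``only if'' direction, Malcev's theorem already pins down the parameters $\mu$, $\bar G_\mu$, and each $G_i$; what remains is to check that two rank-$\mu$ matrices $A$ and $B$ are $R$-related exactly when the coset condition in (b) holds. This uses the rank normal form: for any rank-$\mu$ matrix $A$ there exist $s_1,s_2 \in \gl(n,F)$ with $s_1 A s_2 \in \gl(\mu,F)$, where $\gl(\mu,F)$ is identified with the maximal subgroup containing $E_\mu$. Since $s_1,s_2$ are units in $F_n$, they preserve any congruence, so $(A,B) \in R$ is equivalent to $(s_1 A s_2, s_1 B s_2) \in R$ inside $\gl(\mu, F)$, which by the group-theoretic part of Malcev's description means that $s_1 A s_2$ and $s_1 B s_2$ lie in a common $\bar G_\mu$-coset. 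The converse implication follows analogously from unit-preservation.

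The main obstacle, already resolved in \cite{Malcev2}, is showing that for $i > \mu$ the normal subgroup of $\gl(i,F)$ governing $R$ on rank-$i$ matrices must be central, hence of the form $\{\lambda E_i : \lambda \in G_i\}$ for some $G_i \le F^*$. Structurally this reflects the fact that any non-central normal subgroup of $\gl(i,F)$ is large (containing the special linear subgroup when $i \ge 2$), and multiplying related pairs of rank $i$ by a rank-$\mu$ matrix would then generate relations collapsing more than the ideal of matrices of rank less than $\mu$, contradicting the definition of $\mu$. I would invoke \cite{Malcev2} for this step and, combined with the coset translation above, conclude.
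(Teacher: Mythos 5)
The paper gives no proof of this theorem at all: it is recalled verbatim from Malcev \cite{Malcev2} in the reformulation of \cite{MatWeb}, with the equivalence of the two versions of condition (b) dismissed as ``an easy exercise.'' Your proposal does essentially the same thing --- it defers the substantive content (in particular the centrality of the subgroups governing ranks above $\mu$) to \cite{Malcev2} and supplies only the routine translation of condition (b) via the rank normal form and invariance under units --- so it matches the paper's approach; the one blemish is your parenthetical claim that every non-central normal subgroup of $\gl(i,F)$ contains $\mathrm{SL}(i,F)$, which fails for $\gl(2,\mathbb{F}_2)$ and $\gl(2,\mathbb{F}_3)$, but this is harmless since you invoke Malcev for that step rather than proving it.
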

In addition, we need the following result from \cite{MatWeb}. 
\begin{lem}\label{lm:subspace}
A matrix $A \in F_n$ is a non-zero scalar multiple of the identity matrix if and only if $A$ fixes all subspaces of $F^n$ of dimension $n-1$.
\end{lem}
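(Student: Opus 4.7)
The plan is to treat the two directions separately: the forward direction is immediate, and the reverse direction follows by a standard linear-algebra argument showing that every nonzero vector is an eigenvector of $A$ with the same eigenvalue.

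For the forward direction, I would observe that if $A = \lambda I$ with $\lambda \in F^*$, then for any subspace $V \subseteq F^n$, $A(V) = \lambda V = V$, so in particular $A$ fixes every $(n-1)$-dimensional subspace.

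For the reverse direction, suppose $A$ preserves every hyperplane. First I would show that every nonzero $v \in F^n$ is an eigenvector of $A$. If some $v \ne 0$ were not an eigenvector, then $v$ and $Av$ would be linearly independent; extending to a basis $v, Av, w_3, \dots, w_n$ of $F^n$, the hyperplane $H = \mathrm{span}(v, w_3, \dots, w_n)$ would contain $v$ but not $Av$, contradicting the assumption that $A(H) \subseteq H$. Hence for each nonzero $v$ there is a scalar $\lambda_v \in F$ with $Av = \lambda_v v$.

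Next I would show that $\lambda_v$ is independent of $v$. For linearly independent $v, w$, writing $A(v+w) = \lambda_{v+w}(v+w)$ and $A(v+w) = \lambda_v v + \lambda_w w$, linear independence forces $\lambda_v = \lambda_w = \lambda_{v+w}$; for linearly dependent pairs the scalars agree automatically. Therefore $A = \lambda I$ for a single $\lambda \in F$. Finally, $\lambda \ne 0$ because a hyperplane $V$ is not fixed (as a set) by the zero map, since $0(V) = \{0\} \ne V$ (equivalently, $A = 0$ would have $\ker A = F^n$, so it cannot preserve the $(n-1)$-dimensional subspaces in the required sense).

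The argument is essentially routine linear algebra; the only subtle point is pinning down the precise meaning of ``fixes" in the statement (whether $A(V) \subseteq V$ or $A(V) = V$), but under either reading the conclusion follows from the eigenvector argument, with the non-vanishing of $\lambda$ being the piece that forces the interpretation where $A$ must be invertible.
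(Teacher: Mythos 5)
The paper does not actually prove this lemma itself: it is imported from \cite{MatWeb} without proof and used only through its contrapositive, applied to an invertible matrix. Your argument is the standard one and is correct: if $v$ and $Av$ were independent, the hyperplane spanned by $v,w_3,\dots,w_n$ would contain $v$ but not $Av$, so every nonzero vector is an eigenvector, and the usual comparison of $\lambda_v$, $\lambda_w$, $\lambda_{v+w}$ forces a single eigenvalue. Your closing caveat about the meaning of ``fixes'' is exactly the right one to flag: under the weaker reading $A(V)\subseteq V$ the zero matrix satisfies the hypothesis and the stated equivalence would fail, so the intended reading is $A(V)=V$ (which coincides with invariance in the paper's application, since there the matrix is invertible). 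The only point left untouched is the degenerate case $n=1$, where the unique $(n-1)$-dimensional subspace is $\{0\}$ and the statement is false for $A=0$; this case never occurs in the paper's use of the lemma, since there the hypothesis that forces an application of the lemma is vacuous for rank-one matrices.
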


%\begin{enumerate}	
%\item there exist subgroups $G_n \subseteq G_{n-1} \subseteq ... \subseteq
%G_{\mu+1}$ of $F^*$;
%\item  there exists a normal subgroup $G_\mu$  of GL$(\mu,F)$, such that  $n \le \mu$ or
%$G_{\mu+1} I_{\mu} \subseteq G_\mu$;
%\item  two matrices $A$ and $B$ are in $R$ if and only if one of the following
%conditions holds:
%\begin{itemize}
%       \item  $\rank A < \mu$ and $\rank B < \mu$;
%       \item  $\rank A = \rank B= \mu$, $\row(A)=\row(B)$, $B=AU$, and $U_0 \in G_\mu$ where:
%                                       \begin{itemize}                                     	    
%                                                \item  $U\in F_n$ is a matrix of
%rank $\mu$;
%                                                \item  $U_0$ is the matrix
%of the transformation on $\row(A)$ induced by the right  action of $U$ with regard to some basis of $\row(A)$;
%\end{itemize}
%      \item  $\rank A= \rank B =i$, for some $\mu < i\le n$, and
%$B=\lambda A$ for some $\lambda \in G_i$.
%
%\end{itemize}
%\end{enumerate}	
%\end{thm}
%
%
%For the second condition, note that our conditions imply that the right action of $U$ induces an automorphism on $\row(A)$, guaranteeing  the existence of a $U_0$ with the desired properties. It is also clear that, 
%for a fixed $A$, $B$ and a fixed basis of $\row(A)$, $U_0$ is independent of the choice of $U$.
%Moreover, by the normality of $G_\mu$, $U_0$ may be constructed with regard to any basis of $\row(A)$.

 The following will be applied later without further reference. Let $A,B \in F_n$. From Theorem \ref{th:congrFnmat}, the principal congruence of $F_n$ generated by $(A,B)$ corresponds to the following parameters
 in the theorem:
  \begin{itemize}
  \item If $A=\lambda B$  for some $\lambda \in F^*$, then $\mu=0$, $G_n=\dots=G_{\rank A+1}=\{1\}$, $G_{\rank A}=\dots=G_1=\langle \lambda\rangle$, and $\bar G_0=\{0\}$;
  \item  If $\rank A= \rank B$, $A \ne \lambda B$ for all $\lambda \in F^*$,  and $A \Hg B$, then $\mu=\rank A$, $G_n=\dots=G_{\mu+1}=\{1\}$, and $\bar G_\mu$ is the normal subgroup of GL$(\mu,F)$ that corresponds to the congruence generated by the pair $(s_1 A s_2, s_1B s_2)$, where $s_1,s_2 \in \gl(n,F)$ are such that $s_1 A s_2, s_1B s_2 \in \gl(\rank A, F)$;
  \item  If $ (A,B)\notin\Hg$, and $\max\{\rank A, \rank B\} \le n-1$, then $\mu=\max\{\rank A, \rank B\}+1$, $G_n=\dots=G_{\mu+1}=\{1\}$, and $\bar G_\mu=\{E_\mu\}$. 
  \item If $(A,B) \notin \Hg$ and $\max\{\rank A, \rank B\} =n$, then $(A,B)$ generates the universal congruence.
\end{itemize}

For $0 \le i \le m$, let $I_i^{(n)}$ stand for the ideal of $F_n$  consisting of all matrices $A$ with $\rank A\le i$.
We will usually just write $I_i$ if $n$ is deducible from the context. Let $\theta_{I_i}$ stand for the Rees congruence on $F_n$ defined by the ideal $J_i$.

%Two rank $i$ maps $f,g\in Q_n$ are of the form
%\[
%\begin{array}{cc}
%f:=\left(	
%\begin{array}{ccccc}
%A_1&A_2&\ldots&A_i&A_{i+1}\\
%a_1&a_2&\ldots&a_i&\emptyset
%\end{array}
%\right)
%&
%g:=\left(	
%\begin{array}{ccccc}
%A_1&A_2&\ldots&A_i&A_{i+1}\\
%a_{1\sigma}&a_{2\sigma}&\ldots&a_{i\sigma}&\emptyset
%\end{array}
%\right),
%\end{array}
%\] where $\sigma\in S _i$ and $A_j=a_jf^{-1}$ (for $j\in \{1,\ldots ,i\}$). Usually the set $A_{i+1}$ is omitted. When $f$ and $g$ are as above we will use the shorthand $f=g\cdot \sigma$.

\section{Principal congruences on $F_m\times F_n$} \label{s:principalF}

%The aim of this section is to describe the principal congruences of $F_m\times F_n$. We will start by transferring our notations to the setting of this product semigroup.

For $A \in F_m \cup F_n$, let $|A| = \rank A$, and for $(A,B) \in F_m\times F_n$ let $|(A,B)| = (|A|,|B|)$, where we order these pairs according to the partial order $\le \times \le$.
Throughout, $\pi_1$ and $\pi_2$ denote the projections from $F_m \times F_n$ to the first and second factor, respectively.

We will start with some general lemmas concerning congruences on $F_m\times F_n$. %In case there is no danger of ambiguity, we will use the shorthand $P$ for $F_m\times F_n$.

\begin{lem} \label{lem:singleFmat} Let  $\theta$ be a congruence on  $F_m\times F_n$ and fix  $A\in Q_m$; let
$$\theta_A:=\{ (B,B')\in F_n\times F_n\mid (A,B)\theta (A,B')\}.$$

Then
\begin{enumerate}
\item $\theta_A$ is a congruence on $F_n$;
\item if $A'\in F_m$ and $|A'|\le |A|$, then $\theta_A\subseteq \theta_{A'}$;
\item if $|A|=|A'|$, then $\theta_A=\theta_{A'}$.
\end{enumerate}
\end{lem}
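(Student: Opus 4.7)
The proof will run in complete parallel to that of Lemma \ref{lem:single}, exploiting the monoid structure of $F_m \times F_n$ and the fact that principal ideals in $F_m$ are controlled by rank.

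For part (1), the plan is to observe that $\theta_A$ is trivially reflexive, symmetric, and transitive since $\theta$ is.  To establish compatibility I will use that $F_m$ possesses a multiplicative identity (the identity matrix $1$).  Given $(B,B') \in \theta_A$, so that $(A,B)\,\theta\,(A,B')$, I multiply on the right by $(1,C)$ for an arbitrary $C \in F_n$ to obtain $(A,BC)\,\theta\,(A,B'C)$, hence $BC\,\theta_A\,B'C$; the left compatibility is proved symmetrically.

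For part (2), the key input is the description of principal ideals in $F_m$: for any $A \in F_m$, one has $F_m A F_m = \{C \in F_m : |C| \le |A|\}$, which follows from the fact that $\Dg$-classes in $F_m$ are indexed by rank together with the regularity of $F_m$.  Thus $|A'| \le |A|$ yields matrices $H, H' \in F_m$ with $A' = H A H'$.  Applying $(H,1)$ on the left and $(H',1)$ on the right to the assumed relation $(A,B)\,\theta\,(A,B')$ gives
\[
(A',B) \;=\; (H,1)(A,B)(H',1) \;\theta\; (H,1)(A,B')(H',1) \;=\; (A',B'),
\]
so $\theta_A \subseteq \theta_{A'}$.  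Part (3) follows at once by applying (2) with the roles of $A$ and $A'$ interchanged.

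I do not anticipate any real obstacle: every ingredient has already been established (the ideal structure of $F_m$ and the existence of an identity), and the argument is purely formal.  The only point that deserves a brief remark, to distinguish the matrix setting from the transformation setting, is to justify $F_m A F_m = \{C : |C| \le |A|\}$; but this is standard and can be quoted without reproof, since it is exactly the matrix analogue of the rank description of principal ideals used throughout Section~\ref{s:matrix}.
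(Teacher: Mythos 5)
Your proof is correct and is essentially the paper's argument: the paper simply states that the proof is obtained from that of Lemma \ref{lem:single} by the substitutions $f\to A$, $g\to B$, $Q\to F$, which is exactly the translation you carry out (identity element for compatibility, the rank description of principal ideals $F_mAF_m=\{C: |C|\le |A|\}$ for part (2), and symmetry for part (3)).
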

\begin{proof}
The proof of this lemma is virtual identical to the proof of Lemma \ref{lem:single}, and is obtained from it by the syntactic substitutions $f\to A$, $g \to B$, $Q \to F$.
\end{proof}
%\begin{proof}
%(1) That $\theta_f$ is an equivalence on $Q_n$ is clear. The compatibility follows from the fact that $Q_m$ has  an identity; indeed
%$$(g,g')\in\theta_f\Rightarrow (f,g)\theta (f,g')\Rightarrow (f,g)(1,h)\theta (f,g')(1,h)$$
%$$\Rightarrow  (f,gh)\theta (f,g'h)\Rightarrow gh\theta_fg'h.$$
%Similarly we prove the left compatibility.
%
%(2) If $|f'| \le |f|$ then $f'\in Q_mfQ_m$ and hence we have  $f'=hfh'$, for some $h,h'\in Q_m$;
%in addition $(f,g) \theta (f,g')$ implies $$(f',g)=(h,1)(f,g)(h',1) \theta (h,1)(f,g')(h',1)=(f',g')$$
% so that $\theta_f \subseteq \theta_{f'}$. Condition (3) follows from (2) and its symmetric.
%
%
%
%\end{proof}

In an analogous construction, given a congruence $\th$ on $F_m \times F_n$ and fixed $B \in F_n$, we define $\theta_B:=\{ (A,A')\in F_m\times F_m\mid (A,B)\theta (A',B)\}.$

The next result describes the ideals of $F_n\times F_m$.

\begin{lem}\label{lem:ideals1F}
 The ideals of $F_m\times F_n$ are exactly the unions of sets of the form $I^{(m)}_i\times I^{(n)}_j$, where $I^{(m)}_i$ and $I^{(n)}_j$ are ideals of $F_m$ and $F_n$, respectively.
\end{lem}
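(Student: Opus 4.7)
The plan is to mirror the proof of Lemma \ref{lem:ideals1} essentially verbatim, since the key structural fact it relied on also holds in the matrix case: namely, every principal ideal of $F_n$ has the form $I^{(n)}_i$ for some $i$, as $F_n A F_n = \{C \in F_n : \rank C \le \rank A\}$. This latter equality is standard in linear algebra (one can realise any matrix of rank $\le |A|$ as $s_1 A s_2$ with $s_1, s_2$ suitable rank-adjusting matrices, indeed one may take $s_1, s_2$ to be products of a permutation/change of basis and a projection).

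First, for the easy direction I would observe that each product $I^{(m)}_i \times I^{(n)}_j$ is an ideal of $F_m \times F_n$ (coordinatewise), and any union of ideals in a semigroup is again an ideal. This gives that any set of the described form is an ideal.

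For the converse, I would take an arbitrary ideal $I$ of $F_m \times F_n$ and show
\[
I \;=\; \bigcup_{(A,B) \in I} I^{(m)}_{|A|} \times I^{(n)}_{|B|}.
\]
For the inclusion from left to right, any $(A,B) \in I$ obviously lies in $I^{(m)}_{|A|} \times I^{(n)}_{|B|}$. For the reverse inclusion, given $(A,B) \in I$ and $(A',B') \in I^{(m)}_{|A|} \times I^{(n)}_{|B|}$, we have $|A'| \le |A|$ and $|B'| \le |B|$, hence $A' \in F_m A F_m$ and $B' \in F_n B F_n$ by the remark above. Writing $A' = s_1 A s_2$ and $B' = t_1 B t_2$, we obtain
\[
(A',B') \;=\; (s_1, t_1)(A,B)(s_2,t_2) \;\in\; (F_m \times F_n)(A,B)(F_m \times F_n) \;\subseteq\; I,
\]
since $I$ is an ideal. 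This yields the desired equality and completes the characterisation.

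There is no real obstacle here: the only content of the lemma beyond Lemma \ref{lem:ideals1} is the replacement of the transformation-monoid fact ``$Q_n a Q_n = \{b : |b| \le |a|\}$'' by its matrix analogue, which is elementary linear algebra. Hence the proof is essentially two lines and can be presented exactly as in the transformation case with the substitutions $f \leftrightarrow A$, $g \leftrightarrow B$, $Q \leftrightarrow F$.
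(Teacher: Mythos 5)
Your proof is correct and is exactly what the paper intends: the paper states only that the result "can be obtained from Lemma \ref{lem:ideals1} by making obvious adaptations," and your argument carries out precisely those adaptations, with the single substantive substitution being $F_nAF_n=\{C: \rank C\le \rank A\}$ in place of $Q_naQ_n=\{b:|b|\le|a|\}$. Nothing further is needed.
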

Once again, a proof for this result can be obtained from Lemma \ref{lem:ideals1} by making obvious adaptations. The same holds for our next result,  we only have to modify parts (1) and (3) of  Lemma \ref{lem:ideals}, using the fact that
$F_m \times F_n$ contains a zero element as in the case that $Q \in\{\mathcal{PT},\mathcal{I}\}$.
%\begin{proof}
%That any union of sets of the form $I^{(m)}_i\times I^{(n)}_j$ is an ideal of $P$ is obvious.
%
%Conversely, let $I$ be an ideal of $P$, and $(f,g)\in I$. Then, by the definition of an ideal of $P$, we have $P(f,g)P \subseteq I$,  for every $(f,g)\in I$.  Let $(f',g')\in I_{|f|}\times I_{|g|}$. Then $f'\in Q_m fQ_m$ and $g' \in Q_ngQ_n$ so that $(f',g')\in P(f,g)P\subseteq I$. It follows that $\cup_{(f,g)\in I}I_{|f|}\times I_{|g|} \subseteq I$. Regarding the reverse inclusion, let $(f,g)\in I$; it is self-evident that $(f,g)\in I_{|f|}\times I_{|g|}$ and hence   $\cup_{(f,g)\in I}I_{|f|}\times I_{|g|} \supseteq I$. The result follows.
%\end{proof}

\begin{lem} \label{lem:idealsF}
Let $\theta$ be a congruence on $F_m\times F_n$. Then
\begin{enumerate}
\item  $\theta $  contains a  class $I_\theta$ which is an ideal;
\item $\th$ contains at most one  ideal class.
\end{enumerate}
\end{lem}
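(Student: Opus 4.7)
The plan is to mirror the argument of Lemma \ref{lem:ideals}, but in a simpler way, since $F_m \times F_n$ always contains a zero element, regardless of whether one is working with $\T$, $\PT$, or $\In$ on the transformation side.

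For part (1), observe that the pair $(0,0)$, where the entries denote the zero matrices of $F_m$ and $F_n$, is a zero of the monoid $F_m \times F_n$. Let $K := [(0,0)]_\theta$ be the $\theta$-class of $(0,0)$. I would show $K$ is an ideal by picking $(A,B) \in K$ and $(C,D) \in F_m \times F_n$ arbitrary, and noting that $(C,D)(0,0) = (0,0)$ while $(C,D)(A,B) = (CA, DB)$; since $\theta$ is a congruence, $(0,0) \theta (CA,DB)$, hence $(CA,DB) \in K$. The same argument on the right gives that $K$ is a (two-sided) ideal class of $\theta$.

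For part (2), I would use the fact which holds in every semigroup: if $I$ is a congruence class of $\theta$ that is also an ideal of $F_m \times F_n$, then $I$ is a zero of the quotient semigroup $(F_m \times F_n)/\theta$. Since a semigroup has at most one zero element, the quotient has at most one such class, and so $\theta$ has at most one ideal class.

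Neither step presents a substantive obstacle: the matrix setting behaves exactly like the $\PT$ or $\In$ case of Lemma \ref{lem:ideals}, and the general uniqueness argument of part (3) there transfers verbatim. The only thing worth flagging is that the ideal class $K$ may or may not be a single $\Dg$-class; this will matter later (when studying the possible ``shapes'' of $K$ in the spirit of Lemma \ref{lem:ideals1F}), but it has no bearing on existence or uniqueness.
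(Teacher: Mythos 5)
Your proof is correct and follows exactly the route the paper intends: it explicitly says the result is obtained by adapting parts (1) and (3) of Lemma \ref{lem:ideals}, using that $F_m\times F_n$ has a zero element as in the $\PT$/$\In$ case, which is precisely your argument (the class of $(0,0)$ is an ideal; any ideal class is the necessarily unique zero of the quotient).
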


For the remains of this section we fix the following notation.
Let  $K,K'\in F_m$ and $L,L'\in F_n$. Let $\theta$ be a principal congruence on $F_m\times F_n$ generated by $((K,L),(K',L'))$. Let $\theta_1$ be the principal congruence generated by $(K,K')$ on $F_m$ and $\theta_2$ be the principal congruence generated by $(L,L')$ on $F_n$.

Now suppose that $(K,L), (K',L') \in F_m \times F_n$ are such that $K \Hg K'$ and $L \Hg L'$. Let $|K|=i, |L|=j$. To the pair $((K,L),(K',L'))$ we
associate a normal subgroup $H$ of $\gl(i,F) \times \gl(j,F)$ as follows.  As $|K|=i$ and $|L|=j$, there exist $(s_1,s_2),(s_3,s_4) \in S_m\times S_n$ such that 
$(s_1,s_2)(K,L)(s_3,s_4)=(E_i,E_j)$. Then $(s_1,s_2)(K',L')(s_3,s_4)\in  \gl(i,F) \times \gl(j,F)$, and we take $H$ as the the normal subgroup generated by
 $(s_1,s_2)(K',L')(s_3,s_4)$.
 
%  We claim that the definition of $H$ is independent of our choice for $(s_1,s_2)$, $(s_3,s_4)$. For suppose that  $(t_1,t_2),(t_3,t_4) \in S_m\times S_n$ such that
%$$(t_1,t_2)(K,L)(t_3,t_4)=(E_i,E_j).$$ Then 
%\begin{equation} \label{e:mat} (E_i,E_j)= (t_1,t_2) (s_1,s_2)^{-1}(E_i,E_j)(s_3,s_4)^{-1}(t_3,t_4).\end{equation}
%Let $V, W$ be the subspace of $F^m$ and $F^n$ generated by the columns of $E_i$ and $E_j$, respectively. It follows from (\ref{e:mat}) that   $(t_1,t_2) (s_1,s_2)^{-1}$
%acts bijectively on $V \times W$. A symmetric argument show that this is also the case for $(s_3,s_4)^{-1}(t_3,t_4)$. Hence, the actions of $(t_1,t_2) (s_1,s_2)^{-1}$ 
%and $(s_3,s_4)^{-1}(t_3,t_4)$  on $V \times W$
%are those of some   $s,t \in \gl(i,F) \times \gl(j,F)$. 
%Now, (\ref{e:mat}) shows that in the group $ \gl(i,F) \times \gl(j,F)$, we have $t=s^{-1}$. It follows that in $ \gl(i,F) \times \gl(j,F)$
%$$(t_1,t_2)(K',L')(t_3,t_4)=s(s_1,s_2)(K',L')(s_3,s_4)t$$
%is a conjugate of $(s_1,s_2)(K',L')(s_3,s_4)$ and thus generates the same normal subgroup.

 We claim that the definition of $H$ is independent of our choice for $(s_1,s_2)$, $(s_3,s_4)$. For suppose that  $(t_1,t_2),(t_3,t_4) \in S_m\times S_n$ are such that
$$(t_1,t_2)(K,L)(t_3,t_4)=(E_i,E_j).$$ Then 
\begin{equation} \label{e:mat} (E_i,E_j)= (t_1,t_2) (s_1,s_2)^{-1}(E_i,E_j)(s_3,s_4)^{-1}(t_3,t_4).\end{equation}
Let $V, W$ be the subspaces of $F^m$ and $F^n$ generated by the columns of $E_i$ and $E_j$, respectively. It is easy to check that left multiplication by
 $(t_1,t_2) (s_1,s_2)^{-1}$
maps  $V \times W$ onto itself. Hence there is $s \in \gl(i,F) \times \gl(j,F)$ such that for all $v \in V \times W$ we have $sv=  (t_1,t_2) (s_1,s_2)^{-1}v$. It follows that 
$sA=  (t_1,t_2) (s_1,s_2)^{-1}A$ for all $A \in \gl(i,F) \times \gl(j,F)$.

Symmetrically, there exists  $t \in \gl(i,F) \times \gl(j,F)$ such that 
$At=  A(s_3,s_4)^{-1}(t_3,t_4)$, for all $A \in \gl(i,F) \times \gl(j,F)$.
Now, (\ref{e:mat}) shows that in the group $ \gl(i,F) \times \gl(j,F)$, we have $t=s^{-1}$. Thus, in $ \gl(i,F) \times \gl(j,F)$,
$$(t_1,t_2)(K',L')(t_3,t_4)=s(s_1,s_2)(K',L')(s_3,s_4)s^{-1}$$
is a conjugate of $(s_1,s_2)(K',L')(s_3,s_4)$ and thus generates the same normal subgroup.

If $K'=\lambda K$ for some $\lambda\in F^*$, it is easy to see that the normal subgroup $H$ of $ \gl(i,F) \times \gl(j,F)$ 
associated with 
$((K,L),(K',L'))$ is contained in $F^*E_i \times \gl(j,F)$. We then associate a normal subgroup $\hat H$ of $F^* \times \gl(j,F)$ with the pair $((K,L),(K',L'))$,
taking $\hat H$ as the image of $H$ under the canonical map from $F^*E_i \times \gl(j,F)$ to $F^* \times \gl(j,F)$. If $L'=\lambda L$ for some $\lambda \in F^*$,
we dually  associate a normal subgroup of $\gl(i,F)\times F^*$.

\begin{lem}\label{lm:HHmat}
Let $\th$ be a congruence on $F_m \times F_n$. Let $(K,L),(K',L')\in F_m \times F_n$ be such that  $(K,L) \th (K',L')$, $K \Hg K'$, and $L \Hg L'$. Let $|K|=i, |L|=j$, and 
$H$ be the normal subgroup of  $\gl(i, F) \times \gl(j,F)$ associated with the pair $((K,L),(K',L'))$.      

Let $(M,N), (M,N') \in F_m \times F_n$ where $|M|=i$, $|N|=j$, $M \Hg M'$ and  $N \Hg N'$. Let $H'$ be the normal subgroup of  $\gl(i, F) \times \gl(j,F)$ associated with the pair $((M,N),(M',N'))$. If $H' \subseteq H$, then
$(M,N)\th (M',N')$. 
\end{lem}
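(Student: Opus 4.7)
My strategy is to transport the relation across the $\Hg$-class structure by conjugating, reducing the problem to a statement about the normal subgroup describing $\th$ on a maximal subgroup.

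First, I would use the compatibility of $\th$ together with the definition of $H$. Choose $(s_1,s_2),(s_3,s_4) \in \gl(m,F)\times\gl(n,F)$ with $(s_1,s_2)(K,L)(s_3,s_4)=(E_i,E_j)$. Since $(K,L)\,\th\,(K',L')$ and $\th$ is a congruence on $F_m \times F_n$, we obtain $(E_i,E_j)\,\th\,\alpha$, where $\alpha=(s_1 K' s_3,\, s_2 L' s_4) \in \gl(i,F)\times \gl(j,F)$ is, by definition, a generator of $H$ as a normal subgroup of $\gl(i,F)\times\gl(j,F)$.

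Next, I would use the group-theoretic structure of the maximal subgroup. The $\Hg$-class of $(E_i,E_j)$ in $F_m \times F_n$ is the product of the $\Hg$-classes of $E_i$ in $F_m$ and of $E_j$ in $F_n$, hence equals $G:=\gl(i,F)\times\gl(j,F)$. Because $(E_i,E_j)$ is an idempotent, $G$ is a maximal subgroup of $F_m \times F_n$, and the trace $\th\cap G^2$ is a group congruence on $G$ (the two-sided compatibility is inherited from $\th$, and $G$ is closed under multiplication). Therefore the $\th$-class of $(E_i,E_j)$ within $G$ is a normal subgroup $N_\th \trianglelefteq G$. From the previous paragraph, $\alpha\in N_\th$, and since $H$ is the smallest normal subgroup of $G$ containing $\alpha$, we get $H \subseteq N_\th$.

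Finally, I apply the hypothesis $H'\subseteq H$ to conclude. Pick $(t_1,t_2),(t_3,t_4)\in \gl(m,F)\times\gl(n,F)$ with $(t_1,t_2)(M,N)(t_3,t_4)=(E_i,E_j)$, so that $\beta := (t_1 M' t_3,\, t_2 N' t_4)$ generates $H'$ as a normal subgroup of $G$. Then $\beta \in H' \subseteq H \subseteq N_\th$, which means $(E_i,E_j)\,\th\,\beta$. Multiplying on the left by $(t_1,t_2)^{-1}$ and on the right by $(t_3,t_4)^{-1}$ (using that $\th$ is a congruence) sends the left side to $(M,N)$ and the right side to $(M',N')$, yielding $(M,N)\,\th\,(M',N')$ as required.

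The only conceptual step is the second paragraph: establishing that the trace of $\th$ on $G$ is a genuine group congruence and hence corresponds to a normal subgroup. Everything else is bookkeeping justified by the well-definedness of $H$ and $H'$, which is already proven in the paragraphs preceding the lemma statement (so in particular the two different choices of $s$-tuples and $t$-tuples do not affect the outcome). I do not anticipate any real obstacle beyond being careful with the order of multiplications.
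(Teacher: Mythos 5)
Your proof is correct and follows essentially the same route as the paper's: both reduce to the trace of $\th$ on the maximal subgroup $\gl(i,F)\times\gl(j,F)$ at the idempotent $(E_i,E_j)$, identify that trace with a normal subgroup containing the generator of $H$ (hence $H$, hence $H'$), and then conjugate back by the $t$-tuples. The only cosmetic difference is that you name the normal subgroup $N_\th$ where the paper calls it $R$.
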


\begin{proof} Let $(s_1,s_2),(s_3,s_4) \in S_m\times S_n$ be the elements considered in the definition of $H$: $(s_1,s_2)(K,L)(s_3,s_4)=(E_i,E_j)$ 
and $H$ is the normal subgroup of $\gl(i,F)\times\gl(j,F)$ generated by 
$(s_1,s_2)(K',L')(s_3,s_4)$. Let $(t_1,t_2),(t_3,t_4) \in S_m\times S_n$ be the corresponding elements taken to define
$H'$. We have that
\begin{equation} (s_1,s_2)(K',L')(s_3,s_4) \th (s_1,s_2)(K,L)(s_3,s_4) = (E_i,E_j).\label{e:groups}\end{equation}
Let $\hat \th$ be the restriction of $\th$ to the group $\gl(i, F) \times \gl(j,F)$, and $R$ the normal subgroup of $\gl(i,F)\times \gl(j,F)$ corresponding to $\hat \th$. By (\ref{e:groups}),
we have 
$(s_1,s_2)(K',L')(s_3,s_4) \in R$. As $H$ is normal, $H \subseteq R$, and so $H' \subseteq R$, since $H' \subseteq H$. 
In particular, $(t_1,t_2)(M',N')(t_3,t_4) \in R$, hence $(t_1,t_2)(M',N')(t_3,t_4) \th (E_i, E_j)$, and thus
\begin{eqnarray*}
(M,N)&=&(t_1,t_2)^{-1}(E_i, E_j)(t_3,t_4)^{-1}  \\
 &\th& (t_1,t_2)^{-1}\left((t_1,t_2)(M',N')(t_3,t_4)\right)(t_3,t_4)^{-1}\\
 &=&(M',N').
\end{eqnarray*}

\end{proof}

%\begin{lem}\label{lm:HHmat}
%Let $\th$ be a congruence on $F_m \times F_n$ and $(K,L) \th (K',L')$ such that $K \Hg K'$, $L \Hg L'$. Let $|K|=i, |L|=j$, and 
%$U$, $V$ be linear transformations on $\im K = \im K'$, and
%$\im L=\im L'$, respectively, such that $K'=U \circ K$, $L'= V \circ L$. Let $U_0 \in \gl(i,F)$, $V_0 \in \gl(j,F)$  be matrices representing $U$ and $V$ with respect to some arbitrary bases of $\im K$ and $\im L$. Let $H$ be the  normal subgroup of $\gl(m, F) \times \gl(n,F)$ generated by $(U_0,V_0)$.      
%
%Now suppose that $|M|=i$, $|N|=j$, $\bar U$ is a linear transformation on $\im M$, $\bar V$ is a linear transformation on $\im N$, $M'=\bar U \circ M$, $N' =\bar V\circ N$, and that 
%$\bar U_0$ and $\bar V_0$ represent $\bar U$ and $\bar V$ with respect to some bases of their domains. If $(\bar U_0,\bar V_0)\in H$, then
%$(M,N)\th (M',n')$. 
%\end{lem}

\begin{lem}\label{lm:eHmat}
Let $\th$ be a congruence on $F_m \times F_n$.  Let $(K,L),(K',L') \in F_m \times F_n$ be such that
 $(K,L) \th (K',L')$, $L \Hg L'$, and $K'=\lambda K$ for some $\lambda \in F^*$. Let $|K|=i, |L|=j$, and 
$\hat H$ be the normal subgroup of  $F^* \times \gl(j,F)$ associated with the pair $((K,L),(K',L'))$.      

Suppose that $(M,N), (M,N') \in F_m \times F_n$ are such that  $|M|=k\le i$, $|N|=j$, $N \Hg N'$, and $M'=\lambda' M$ for some $\lambda' \in F^*$. Let $\hat H'$ be the normal subgroup of  $F^* \times \gl(j,F)$ associated with the pair $((M,N),(M',N'))$. If $\hat H' \subseteq \hat H$, then
$(M,N)\th (M',N')$. 

\end{lem}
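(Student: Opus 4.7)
The plan is to transfer the rank-$i$ relation afforded by $(K,L)\,\th\,(K',L')$ down to rank $k$, mirroring the approach of Lemma \ref{lm:HHmat} but accommodating the extra scalar factor $\lambda'$ and the rank drop $k\le i$.

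First I would move $(K,L)$ into normal form: choose invertible $(s_1,s_2),(s_3,s_4)\in \gl(m,F)\times \gl(n,F)$ with $(s_1,s_2)(K,L)(s_3,s_4)=(E_i,E_j)$, and set $\bar L' = s_2 L' s_4$, so that $(s_1,s_2)(K',L')(s_3,s_4)=(\lambda E_i,\bar L')\in F^*E_i\times \gl(j,F)$, and $H$ is the normal subgroup of $\gl(i,F)\times \gl(j,F)$ generated by this element. Applying compatibility of $\th$ with multiplication on both sides yields $(E_i,E_j)\,\th\,(\lambda E_i,\bar L')$. Let $R$ be the normal subgroup of $\gl(i,F)\times\gl(j,F)$ corresponding to the restriction of $\th$ to that maximal subgroup. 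Then $(\lambda E_i,\bar L')\in R$, so by normality of $R$ we conclude $H\subseteq R$.

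Next I would treat the target pair analogously: pick $(t_1,t_2),(t_3,t_4)\in \gl(m,F)\times \gl(n,F)$ with $(t_1,t_2)(M,N)(t_3,t_4)=(E_k,E_j)$ and put $\bar N'=t_2 N' t_4\in \gl(j,F)$, so that $(t_1,t_2)(M',N')(t_3,t_4)=(\lambda' E_k,\bar N')$ and $\hat H'$ is generated by $(\lambda',\bar N')$ in $F^*\times \gl(j,F)$. Since the canonical projection $F^*E_i\times\gl(j,F)\to F^*\times\gl(j,F)$ is an isomorphism, the containment $\hat H'\subseteq \hat H$ lifts uniquely: $(\lambda' E_i,\bar N')\in H\subseteq R$, giving
\[
(E_i,E_j)\,\th\,(\lambda' E_i,\bar N').
\]
The decisive step is then a rank-reduction manoeuvre. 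Since $k\le i$, we have $E_kE_iE_k=E_k$, so multiplying the above relation on both sides by $(E_k,1)$ yields $(E_k,E_j)\,\th\,(\lambda' E_k,\bar N')$. A final conjugation by $(t_1^{-1},t_2^{-1})$ on the left and $(t_3^{-1},t_4^{-1})$ on the right recovers $(M,N)\,\th\,(\lambda' M,N')=(M',N')$, as desired.

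The main obstacle is the rank-reduction step: it works only because the first coordinate carries the mere scalar $\lambda'$ (which commutes with $E_k$) rather than a general element of $\gl(i,F)$. This is precisely the structural feature that distinguishes this ``$\varepsilon\Hg$-type'' situation from the pure $\Hg\Hg$ case of Lemma \ref{lm:HHmat}, and it is the reason one can drop to the smaller ambient group $F^*\times\gl(j,F)$ in the definition of $\hat H$; the same feature is what allows the conclusion $(M,N)\,\th\,(M',N')$ to hold even when $|M|$ is strictly less than $|K|$.
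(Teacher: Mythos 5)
Your proof is correct and follows essentially the same route as the paper's: both arguments rest on the canonical isomorphism $F^*E_r\times\gl(j,F)\cong F^*\times\gl(j,F)$ identifying the associated normal subgroups across ranks, and on multiplication by $(E_k,E_n)$ to move between ranks $i$ and $k$, which works precisely because the first coordinate is scalar. The only (cosmetic) difference is that you inline the group-congruence mechanism of Lemma \ref{lm:HHmat} at rank $(i,j)$ and perform the rank reduction afterwards, whereas the paper first pushes the hypothesis pair down to rank $(k,j)$ and then invokes Lemma \ref{lm:HHmat} as a black box.
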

\begin{proof} 
We have $K \Hg K'$and $M \Hg M'$ so,
by Lemma \ref{lm:HHmat}, we may assume that $(K,L)=(E_i,E_j)$. We then have $(E_i, E_j)\th (\lambda E_i, L')$, and hence 
$$(E_k,E_j)=(E_k,E_n)(E_i,E_j) \th (E_k,E_n)(\lambda E_i, L')=(\lambda E_k,L').$$
It is straightforward to check that the normal subgroup of $F^* \times \gl(k,F)$ associated with $((E_k,E_j), (\lambda E_k, L'))$ is $\hat H$. 

Let $\bar H$ be the normal subgroup 
of $\gl(k,F)\times \gl(j,F)$ associated with this pair, so that $\hat H=\phi(\bar H)$, where $\phi$ is the natural isomorphism from $F^*E_k\times \gl(j,F)$ to 
$F^*\times \gl(j,F)$. Let $\bar H'$ be the normal subgroup of $\gl(k,F)\times \gl(j,F)$ associated with the pair $((M,N),(M',N'))$. 
Notice that $\bar H' \subseteq F^*E_k \times \gl(j,F)$  since $M'=\lambda'M$.
Then 
$\bar H' =\phi^{-1} (\hat H') \subseteq \phi^{-1}(\hat H) =\bar H$. The result now follows with Lemma \ref{lm:HHmat}.
\end{proof}
It is clear that a dual version of Lemma \ref{lm:eHmat} holds as well.

Now let us look at the principal congruences on $F_m \times F_n$.

\begin{thm}\label{lm:equalF1} Let $\theta$ be a principal congruence on $F_m\times F_n$ generated by $((K,L),(K',L'))$.
If $K'=\lambda K$ and $L'= \nu L$ for some $\lambda, \nu \in F^*$, we have $(M,N)\theta(M',N')$ if and only if  $(M,N)= (M',N')$ or  $|M|\le |K|$, $|N|\le |L|$, and there exists 
a $(\lambda', \nu') \in \langle (\lambda,\nu) \rangle\subseteq F^* \times F^*$ such that $M'=\lambda' M$ and $N'= \nu' N$.
\end{thm}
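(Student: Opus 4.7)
The plan is to mirror the approach used for Lemma \ref{lm:equal}: define a relation $\th'$ on $F_m \times F_n$ matching the right-hand side of the statement, and then establish the two inclusions $\th' \subseteq \th$ and $\th \subseteq \th'$ separately. The essential new feature, compared to the transformation-semigroup analogue, is that the generating pair scales both coordinates simultaneously by fixed nonzero scalars $\lambda,\nu$, so the ``matching'' condition in $\th'$ must be correlated through the cyclic subgroup $\langle(\lambda,\nu)\rangle$ of $F^* \times F^*$, rather than permitting independent scalar multiples in each factor.

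For the inclusion $\th' \subseteq \th$, I will take $(M,N)\,\th'\,(M',N')$ with $(M,N)\neq(M',N')$, so that $|M|\le|K|$, $|N|\le|L|$, and $(M',N')=(\lambda^k M,\nu^k N)$ for some $k\in\mathbb{Z}$. Using the ideal description from Lemma \ref{lem:ideals1F}, write $M=s_1Kt_1$ and $N=s_2Lt_2$ for suitable matrices. Then
\[
(M,N)=(s_1,s_2)(K,L)(t_1,t_2)\;\th\;(s_1,s_2)(K',L')(t_1,t_2)=(\lambda M,\nu N),
\]
which handles the case $k=1$. For higher positive $k$, apply the same identity iteratively (this is legitimate because $|\lambda^jM|=|M|\le|K|$ and $|\nu^jN|=|N|\le|L|$ for every $j$). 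For negative $k$, I will left-multiply the relation $(M,N)\,\th\,(\lambda M,\nu N)$ by the pair of scalar matrices $(\lambda^{-1}I_m,\nu^{-1}I_n)$ to obtain $(\lambda^{-1}M,\nu^{-1}N)\,\th\,(M,N)$, and iterate.

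For the reverse inclusion $\th \subseteq \th'$, it suffices to check that $\th'$ is a congruence containing the generating pair $((K,L),(K',L'))$. Membership of the generator is immediate by taking $(\lambda',\nu')=(\lambda,\nu)$. Reflexivity is clear (the first disjunct handles pairs of high rank; the second with $(\lambda',\nu')=(1,1)$ handles the rest). Symmetry and transitivity follow from the facts that $\langle(\lambda,\nu)\rangle$ is a group and that nonzero scalar multiplication preserves rank, so the rank conditions $|M|\le|K|$, $|N|\le|L|$ are preserved under replacing $(M,N)$ by $(\lambda'M,\nu'N)$. Compatibility with multiplication on either side is a routine check: if $(M',N')=(\lambda'M,\nu'N)$ then $(M'P,N'Q)=(\lambda'\cdot MP,\nu'\cdot NQ)$, and ranks only decrease under multiplication, so the rank constraints remain satisfied.

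The main obstacle is bookkeeping rather than mathematical difficulty: one must be careful to observe that the cyclic-group framework forces correlated scaling (rather than permitting independent $(\lambda',\nu')\in F^*\times F^*$), and that the argument for negative exponents requires the scalar-matrix trick rather than a direct ideal-style factorization. Once these points are in place, the verification is analogous to the transformation case, and I will leave the routine equivalence and compatibility checks to the reader.
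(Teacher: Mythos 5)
Your proposal is correct and follows essentially the same route as the paper: define the relation $\th'$ given by the right-hand side, obtain $(M,N)\,\th\,(\lambda M,\nu N)$ from a factorization $M=UKV$, $N=WLX$ through the generating pair, extend to all powers of $(\lambda,\nu)$, and finish by verifying that $\th'$ is a congruence containing the generator. The only (harmless) difference is that the paper obtains the inverse step $(M,N)\,\th\,(\lambda^{-1}M,\nu^{-1}N)$ by factoring through $K'$ and $L'$ instead of your scalar-matrix multiplication trick; both work.
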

\begin{proof} The lemma can be shown by applying Lemma \ref{lm:eHmat} followed by its dual. However, we will give a short direct proof.

Let $\th'$ be the binary relation defined by the statement of the lemma.

If $|M|\le |K|$, $|N|\le |L|$  then $M=UKV$, $N=WLX$ for some $U,V\in F_m$, $W,X\in F_n$, and hence
$$(M,N)=(UKV, WLX) \th (UK'V, WL'X)=(U\lambda KV, W\nu LX)$$
$$=(\lambda UKV, \nu WLX)=(\lambda M, \nu N).$$
As $|M|\le |K|=|K'|$, $|N|\le |L|=|L'|$, we can similarly show that $ (M,N) \th (\lambda^{-1} M, \nu^{-1} N)$. Thus $\th' \subseteq \th$, since $(M',N')=(\lambda'M, \nu'N)$, and
$(\lambda',\nu')$ belongs to the cyclic group generated by $(\lambda,\nu)$.

Conversely, it is straightforward to check that $\th'$ is a congruence containing $((K,L),(K',L'))$, therefore
$\th \subseteq \th'$. The result follows.
\end{proof}

\begin{lem} \label{lem:Icontmat}
Let $\theta$ be the principal congruence on $F_m\times F_n$ generated by $((K,L),(K',L'))$, and let $j = \max \{|K|,|K'|\}$.
If %$L'\ne \lambda L$ for all $\lambda \in F^*$ and 
$(K,K')\not\in \mathcal{H} $,  then $\th_L$ or $\theta_{L'}$ contains the Rees congruence $\theta_{I_j}$ of $F_m$.
\end{lem}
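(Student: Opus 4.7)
The plan is to follow the strategy of Lemma \ref{lem:Icont}, making two necessary adjustments to the matrix setting: the roles of $\Lg$ and $\Rg$ are interchanged (since in $F_m$ the $\Lg$-classes are given by equality of kernels, while the $\Rg$-classes are given by equality of images), and correspondingly the sides on which the auxiliary idempotent is multiplied must be swapped. Without loss of generality I will assume $|K'| = j \ge |K|$ and prove $\theta_{I_j} \subseteq \theta_{L'}$; the symmetric case $|K| \ge |K'|$ follows by exchanging the roles of $K, L$ and $K', L'$. Since $(K,K') \notin \Hg$, either $\im K \ne \im K'$ or $\ker K \ne \ker K'$.

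In the image case, the rank inequality $|K| \le |K'|$ forces $\im K' \not\subseteq \im K$. Using the regularity of $F_m$, I would pick an idempotent $H$ with $H \Rg K$, so that $\im H = \im K$ and $HK = K$. Multiplying $((K,L),(K',L')) \in \theta$ on the left by $(H,1)$ gives
\[
(HK', L') = (H,1)(K',L') \;\theta\; (H,1)(K,L) = (HK, L) = (K, L) \;\theta\; (K',L'),
\]
hence $(HK', K') \in \theta_{L'}$. Since $\im HK' \subseteq \im K$ but $\im K' \not\subseteq \im K$, we have $(HK', K') \notin \Rg$, and therefore $(HK', K') \notin \Hg$. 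As $\max\{|HK'|,|K'|\} = j$, the description of principal congruences on $F_m$ recorded after Theorem \ref{th:congrFnmat} identifies the congruence generated by $(HK', K')$ with the Rees congruence $\theta_{I_j}$ (in the extremal case $j = m$, this congruence coincides with the universal one, which still contains $\theta_{I_j}$). Consequently $\theta_{I_j} \subseteq \theta_{L'}$.

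The kernel case is completely parallel: the inequality $\dim \ker K \ge \dim \ker K'$ together with $\ker K \ne \ker K'$ yields $\ker K \not\subseteq \ker K'$, and I would pick an idempotent $H$ with $H \Lg K$, so that $\ker H = \ker K$ and $KH = K$. Multiplying on the right by $(H,1)$ this time gives $(K'H, K') \in \theta_{L'}$, and $\ker K'H \supseteq \ker H = \ker K \not\subseteq \ker K'$ shows that $(K'H, K') \notin \Lg$, hence not in $\Hg$; the same argument as before then yields $\theta_{I_j} \subseteq \theta_{L'}$.

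The only point requiring genuine attention is the switching of sides caused by the reversed composition in $F_m$; once the correct side is identified, the existence of the required idempotents is immediate from the regularity of $F_m$ (explicit projections onto the relevant subspaces), and the identification of the generated principal congruence with $\theta_{I_j}$ follows at once from the classification recalled in Section \ref{s:matrix}.
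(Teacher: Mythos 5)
Your proof is correct and essentially coincides with the paper's own argument: the same reduction to $j=|K'|$, the same case split on image versus kernel, the same choice of an idempotent $\Rg$- (resp.\ $\Lg$-) related to $K$ multiplied on the left (resp.\ right), and the same appeal to the classification of principal congruences of $F_m$ to identify the resulting congruence with $\theta_{I_j}$. The side-swapping relative to Lemma \ref{lem:Icont} that you flag is exactly the adaptation the paper makes.
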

We remark that the following proof is essentially equivalent to the proof of Lemma \ref{lem:Icont}. As the technical adaptations required to transform one lemma into the other 
are more complex 
than in previous cases,  we have decided to provide a complete proof. 
\begin{proof}
We will show that for $j=|K'|$, $\th_{I_j} \subseteq \th_{L'}$. An anologous result for $j=|K|$ follows symmetrically. So let us assume that $|K| \le |K'|=j$.
As $(K,K')  \not\in  \mathcal{H}$, $K$ and $K'$ must differ in either the image or the kernel. We consider
 two cases.

 {First case}:  $\im K \ne \im K'$.

As $|K'|=j\ge |K|$, then $\im K' \not \subseteq \im K$. As $F_m$ is regular, there exists an idempotent $M \in F_m$ such that $M \Rg K$. Hence $\im M= \im K$, and as $M$ is idempotent,
$MK=K$.

We have that 
\begin{eqnarray*}
(MK',L') & = &(M, E_n)(K',L') \\ 
 &\th & (M, E_n)(K,L)\\
 &=&(MK,L) \\
 &=&(K,L) \\
 &\th& (K',L')
\end{eqnarray*}
 and
so $(MK', K') \in \theta_{L'}$ on $F_m$. As $\im K' \not \subseteq \im K$, and $\im M=\im K$, the transformations  $MK'$ and $K'$ have different images, and 
it follows that $(MK',K') \notin\Hg$. Now, the congruence $\th'$ generated by $(MK',K')$ is contained in $\th_{L'}$ and by Theorem \ref{th:congrFnmat} and the remarks following it, we have $\th'=\th_{I_j}$. We get $\th_{I_j} \subseteq \theta_{L'}$.

{Second case}:  $\ker K \ne \ker K'$.

Now $|K'|=j \ge |K|$, implies that $\ker K \not\subseteq \ker K'$. As above, the regularity of $F_m$ implies that there exists an idempotent $M$ that is $\Lg$-related to $K$; thus $M$ and $K$ have the same kernel and $KM=K$.
 Hence
 \begin{eqnarray*}
(K'M,L')&=& (K',L')(M,E_n) \\
&\theta & (K,L)(M,E_n)\\
&=&(KM,L)\\
&=&(K,L)\\
& \theta &(K',L')
\end{eqnarray*}
and so $(K'M,K')\in \theta_{L'}$.
Now $\ker K = \ker M \subseteq \ker(K'M)$. As $\ker K \not \subseteq \ker K'$, we get $\ker K' \ne \ker (K'M)$ and so
$(K'M,K')\not\in \Hg$.  As above, by Theorem \ref{th:congrFnmat} and the remarks following it, 
we get $\th_{I_j} \subseteq \theta_{L'}$.
\end{proof}

\begin{thm}\label{theorem2.5mat}\label{th:0hmat}
Let $\theta$ be the congruence on $F_m \times F_n$ generated by $((K,L), (K',L'))$. 
Assume that $(K,K') \not\in \mathcal{H} $, $(L,L') \not\in \mathcal{H} $, $|K|=i, |K'|=j, |L|=k$, and $|L'|=l$.
Then $\theta$ is the Rees congruence on $F_m\times F_n$ defined by the ideal $J=I_i\times I_k
\cup  I_j \times I_l$.
\end{thm}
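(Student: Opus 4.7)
The plan is to mirror the proof of Theorem \ref{th:0h} essentially line by line, with the zero matrix playing the role of the empty map / constant map. The skeleton is: show that $\th$ has an ideal class, show that $(K,L)$ lies in it, identify this ideal class with $J$, and finally conclude via Rees-congruence uniqueness.

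First, by Lemma \ref{lem:idealsF}(1), $\th$ contains an ideal class $C$. Since $I_0\times I_0=\{(0,0)\}$ is contained in every ideal, we have $(0,0)\in C$. The aim is to show $(K,L)\in C$, after which $(K',L')\in C$ (since $C$ is a $\th$-class) and hence $J\subseteq C$, because $J$ is the smallest ideal containing both $(K,L)$ and $(K',L')$ by Lemma \ref{lem:ideals1F}. This gives $\th_J\subseteq\th$. The reverse inclusion is immediate since $\th_J$ is a congruence containing the generating pair, so $\th=\th_J$ as required.

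The heart of the argument is therefore showing $(K,L)\in C$. Here I apply Lemma \ref{lem:Icontmat} to get that $\th_{I_{\max\{i,j\}}}\subseteq\th_L$ or $\th_{I_{\max\{i,j\}}}\subseteq\th_{L'}$, and dually (via the obvious dual of Lemma \ref{lem:Icontmat}, with roles of the two factors swapped) $\th_{I_{\max\{k,l\}}}\subseteq\th_K$ or $\th_{I_{\max\{k,l\}}}\subseteq\th_{K'}$. This yields two cases, up to symmetry, exactly parallel to the proof of Theorem \ref{th:0h}. In the first, $\th_{I_{\max\{i,j\}}}\subseteq\th_L$ and $\th_{I_{\max\{k,l\}}}\subseteq\th_K$: then $(K,0)\in\th_L$ forces $(K,L)\,\th\,(K,0)$, and $(0,L)\in\th_K$ combined with $\th_K\subseteq\th_0$ (Lemma \ref{lem:singleFmat}(2)) gives $(K,0)\,\th\,(0,0)$, whence $(K,L)\in C$ by transitivity.

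The second case is the only one requiring a small additional trick: $\th_{I_{\max\{i,j\}}}\subseteq\th_L$ and $\th_{I_{\max\{k,l\}}}\subseteq\th_{K'}$. Here I obtain the chain
\[
(K,0)\,\th\,(K,L)\,\th\,(K',L')\,\th\,(0,L'),
\]
and then use regularity of $F_m$ to pick $h\in F_m$ with $KhK=K$ (such $h$ exists since $F_m$ is a regular monoid). Then
\begin{align*}
(K,L) &\,\th\, (K,0) \;=\; (KhK,0)\\
&\;=\; (K,0)(h,0)(K,0)\\
&\,\th\,(K,0)(h,0)(0,L')\\
&\;=\;(0,0)\;\in\;C,
\end{align*}
so again $(K,L)\in C$.

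No step looks genuinely hard: the matrix monoid $F_m$ being a regular semigroup provides the required idempotent/factorisation tricks, and Lemmas \ref{lem:idealsF}, \ref{lem:singleFmat}, \ref{lem:Icontmat} are the direct analogues of the lemmas used in the $Q_m\times Q_n$ case. The only place where one must be slightly careful is the second case above, where the composition must be written in the correct (left-to-right) order for matrices, as flagged in the opening of Section \ref{s:matrix}; but this is purely a matter of bookkeeping. Once $(K,L)\in C$, the conclusion $\th=\th_J$ is formal.
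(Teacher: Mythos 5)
Your proposal is correct and follows exactly the route the paper intends: the authors' entire ``proof'' of Theorem~\ref{th:0hmat} is the remark that it is essentially the proof of Theorem~\ref{th:0h}, and your transcription (zero matrix in place of the empty/constant maps, regularity of $F_m$ supplying $h$ with $KhK=K$, Lemmas~\ref{lem:idealsF}, \ref{lem:singleFmat}, \ref{lem:ideals1F} and \ref{lem:Icontmat} in place of their $Q$-analogues) is sound. One bookkeeping caveat: in your first case the subscripts are crossed --- $(K,0)\in\th_L$ gives $(K,L)\,\th\,(0,L)$, not $(K,L)\,\th\,(K,0)$, the latter coming from $(L,0)\in\th_K$, and the step $(K,0)\,\th\,(0,0)$ should be extracted from $(K,0)\in\th_L\subseteq\th_{0}$ rather than from $\th_K$ --- and in the second case the displayed chain actually uses the hypotheses $\th_{I_{\max\{k,l\}}}\subseteq\th_K$ and $\th_{I_{\max\{i,j\}}}\subseteq\th_{L'}$ rather than the ones you announce; this is harmless because the two crossed sub-cases are interchanged by swapping the generating pair, but it should be tidied.
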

The proof of this theorem is once again essentially the proof of Theorem \ref{th:0h}. 

\begin{cor}
Under the conditions of Theorem \ref{theorem2.5mat}, if $i \le j$ and $k \le l$ then $\theta=\theta_{I_j \times I_l}$.
\end{cor}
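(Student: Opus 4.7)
The plan is to observe that this corollary is an essentially immediate consequence of Theorem \ref{theorem2.5mat}. That theorem establishes that $\theta$ is the Rees congruence on $F_m \times F_n$ defined by the ideal $J = I_i \times I_k \cup I_j \times I_l$, so the task reduces to showing that under the hypotheses $i \le j$ and $k \le l$, we actually have $J = I_j \times I_l$.

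To see this, I would first recall that the ideals $I_r$ of $F_m$ (and of $F_n$) form an ascending chain under inclusion, since by definition $I_r = \{A \mid \rank A \le r\}$. Hence $i \le j$ gives $I^{(m)}_i \subseteq I^{(m)}_j$, and $k \le l$ gives $I^{(n)}_k \subseteq I^{(n)}_l$. Taking products preserves inclusions, so $I_i \times I_k \subseteq I_j \times I_l$, and therefore the union collapses:
\[
J = (I_i \times I_k) \cup (I_j \times I_l) = I_j \times I_l.
\]
Substituting this into the conclusion of Theorem \ref{theorem2.5mat} yields $\theta = \theta_{I_j \times I_l}$, as required. There is no substantive obstacle: the corollary is purely a bookkeeping observation recording the simplest shape of the ideal $J$ under a comparability assumption on the two pairs of ranks.
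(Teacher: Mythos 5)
Your argument is correct and is exactly the reasoning the paper intends: the corollary is stated without proof precisely because, under $i \le j$ and $k \le l$, the containments $I^{(m)}_i \subseteq I^{(m)}_j$ and $I^{(n)}_k \subseteq I^{(n)}_l$ make the union $I_i \times I_k \cup I_j \times I_l$ collapse to $I_j \times I_l$, so Theorem \ref{theorem2.5mat} gives the conclusion directly.
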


The proof of the following Theorem corresponds to the proof of Theorem \ref{th:1h}.
\begin{thm}\label{th:1hmat}
Let $\theta$ be the congruence on $F_m \times F_n$ generated by $((K,L), (K',L'))$, and let $\th_2$ be the congruence on $F_n$ generated by $(L,L')$.
If $(L,L')\in \Hg$ and $(K,K')\not\in \Hg$; let $j=\mbox{max}\{|K|,|K'|\}$ and $k=|L|=|L'|$.
 Then  $(M,N) \th (M',N')$
 if and only if $(M,N)=(M',N')$ or
$|M|, |M'| \le j$, $|N|, |N'| \le k$, $N \th_2 N'$.
\end{thm}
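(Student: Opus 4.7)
The proof will mirror that of Theorem \ref{th:1h}, substituting the matrix analogues of the auxiliary lemmas.

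I would begin with some preparatory inclusions among the ``slice" congruences $\theta_A, \theta_B$. By the dual of Lemma \ref{lem:singleFmat}, $\theta_L = \theta_{L'} \subseteq \theta_N$ for every $N \in F_n$ with $|N| \le k$. By Lemma \ref{lem:Icontmat}, either $\theta_L$ or $\theta_{L'}$ contains the Rees congruence $\theta_{I_j}$ on $F_m$; since these two are equal, both contain $\theta_{I_j}$, and so $\theta_{I_j} \subseteq \theta_N$ for all such $N$. Assuming without loss of generality that $|K'|=j$, we have $K, K' \in I_j$, hence $K \,\theta_L\, K'$, giving
\[
(K',L') \,\theta\, (K,L) \,\theta\, (K',L).
\]
Thus $(L,L') \in \theta_{K'}$, so $\theta_2 \subseteq \theta_{K'}$, and by Lemma \ref{lem:singleFmat}, $\theta_2 \subseteq \theta_{U}$ for every $U \in F_m$ with $|U| \le j$.

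Next, let $\theta'$ denote the binary relation described by the right-hand side of the theorem statement. To prove $\theta' \subseteq \theta$, assume $(M,N) \ne (M',N')$ satisfy $|M|,|M'| \le j$, $|N|,|N'| \le k$, and $N \theta_2 N'$. Taking $U=M$ in the previous paragraph yields $\theta_2 \subseteq \theta_M$, hence $(M,N) \,\theta\, (M,N')$. Since $|N'| \le k$ gives $\theta_{I_j} \subseteq \theta_{N'}$, and $M, M' \in I_j$, we get $(M,M') \in \theta_{N'}$, i.e.\ $(M,N') \,\theta\, (M',N')$. Transitivity then gives $(M,N) \,\theta\, (M',N')$.

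For the reverse inclusion $\theta \subseteq \theta'$, it suffices to verify that $\theta'$ is a congruence on $F_m \times F_n$ containing the generating pair $((K,L),(K',L'))$. Containment of the generator is immediate from $|K|,|K'|\le j$, $|L|,|L'|\le k$, and $L\,\theta_2\, L'$. Reflexivity, symmetry and transitivity of $\theta'$ follow directly from the corresponding properties of $\theta_2$ together with the observation that the conditions on rank are symmetric in the two pairs. For left/right compatibility, one checks that if $(M,N)\,\theta'\,(M',N')$ with $(M,N)\ne(M',N')$, and $(A,B),(C,D) \in F_m \times F_n$ are arbitrary, then $|AMC|\le |M|\le j$ and similarly $|AM'C|\le j$, $|BND|\le k$, $|BN'D|\le k$, while $BND \,\theta_2\, BN'D$ because $\theta_2$ is a congruence on $F_n$. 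This places $\bigl((A,B)(M,N)(C,D), (A,B)(M',N')(C,D)\bigr)$ in $\theta'$ (or else the two coincide, which is also allowed).

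I expect the main obstacle to be entirely bookkeeping: the structure of the argument is identical to the transformation case, but one must verify that the matrix versions of Lemma \ref{lem:single} and Lemma \ref{lem:Icont} apply in the exact same shape, and that the compatibility verification for $\theta'$ handles trivial degenerations (such as $AMC = AM'C$) without incident. No genuinely new ideas beyond those already developed in Section \ref{s:principalF} should be required.
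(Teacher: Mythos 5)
Your proof is correct and takes exactly the route the paper intends: the paper gives no separate proof of Theorem~\ref{th:1hmat}, remarking only that it ``corresponds to the proof of Theorem~\ref{th:1h}'', and your argument is precisely that proof transcribed to the matrix setting, with Lemma~\ref{lem:singleFmat} and Lemma~\ref{lem:Icontmat} playing the roles of Lemma~\ref{lem:single} and Lemma~\ref{lem:Icont}. The chain $\theta_{I_j}\subseteq\theta_L=\theta_{L'}\subseteq\theta_N$, the derivation $\theta_2\subseteq\theta_U$ for $|U|\le j$, and the final verification that $\theta'$ is a congruence containing the generator all go through without change, so there is nothing to add.
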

Notice that we can once again give a more explicit description of $\th$ by incorporating the classification of $\th_2$ given by Theorem \ref{th:congrFnmat}.
\begin{cor}\label{c:1hmat}\begin{enumerate} \item
Let $(K,L),(K',L') \in F_m \times F_n$, such that $L'\ne \lambda L$ for any $\lambda \in F^*$, $(L,L')\in \Hg$ and $(K,K')\not\in \Hg$.
Let $j=\mbox{max}\{|K|,|K'|\}$ and $k=|L|=|L'|$. Let $s_1, s_2 \in \gl(n,F)$ be such that $s_1 L s_2$ and $s_1 L' s_2$ are in $\gl(k,F)$, and $H$ the normal subgroup of 
$\gl(k,F)$ generated by $(s_1 L s_2, s_1 L' s_2)$.
If $\theta$ is the congruence on $F_m \times F_n$ generated by $((K,L), (K',L'))$, then $(M,N) \th (M',N')$
 if and only if one of the following holds:
\begin{enumerate}
  \item $(M,N)=(M',N')$ for $|M| >j$ or $|N|>k$,
  \item $|M|,|M'| \le j$, $|N|=k$, $N \Hg N'$ and there exist $t_1, t_2 \in \gl(n,F)$ such that $t_1 N t_2$, $t_1 N' t_2 \in \gl(k,F)$ and lie in the same coset of $H$. 
    \item $|M|, |M'| \le j$ and $|N|, |N'| < k$.
\end{enumerate}
\item Let $(K,L),(K',L') \in F_m \times F_n$, such that $L'= \lambda L$ for some $\lambda \in F^*$,  and $(K,K')\not\in \Hg$.
Let $j=\mbox{max}\{|K|,|K'|\}$ and $k=|L|=|L'|$. Let $H$ be the subgroup of $F^*$ generated by $\lambda$. 
If $\theta$ is the congruence on $F_m \times F_n$ generated by $((K,L), (K',L'))$, then $(M,N) \th (M',N')$
 if and only if one of the following holds:
\begin{enumerate}
  \item $(M,N)=(M',N')$ for $|M| >j$ or $|N|>k$,
  \item $|M|,|M'| \le j$, $|N|\le k$, $N'= \lambda' N$ for some $\lambda' \in H$. 
\end{enumerate}
\end{enumerate}
\end{cor}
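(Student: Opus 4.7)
My plan is to derive this corollary as a direct unpacking of Theorem~\ref{th:1hmat} using the explicit classification of principal congruences on $F_n$ recorded in the bullet-point list following Lemma~\ref{lm:subspace}. By Theorem~\ref{th:1hmat}, under the stated hypotheses, $(M,N)\,\theta\,(M',N')$ holds iff $(M,N)=(M',N')$ or both the rank bounds $|M|,|M'|\le j$ and $|N|,|N'|\le k$ hold together with $N\,\theta_2\,N'$, where $\theta_2$ is the principal congruence on $F_n$ generated by $(L,L')$. Thus the entire task reduces to describing $\theta_2$ explicitly in each of the two cases and recombining. The clause $(M,N)=(M',N')$ ``for $|M|>j$ or $|N|>k$'' in condition~(a) of both cases is just the contrapositive of the rank bounds in Theorem~\ref{th:1hmat}.

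For case (1), we have $L'\notin F^*L$ and $L\,\Hg\,L'$, so the second bullet in Section~\ref{s:matrix} applies to $\theta_2$: the associated parameters are $\mu=k$, $G_n=\cdots=G_{k+1}=\{1\}$, and $\bar G_k=H$, the normal subgroup of $\gl(k,F)$ generated (in the congruence-to-normal-subgroup correspondence) by the pair $(s_1Ls_2,s_1L's_2)$. Feeding these back into Theorem~\ref{th:congrFnmat} yields three mutually exclusive possibilities for $N\,\theta_2\,N'$: either $|N|,|N'|<k$ (clause (a) of the theorem), or $|N|=|N'|=k$ with $t_1Nt_2,t_1N't_2\in\gl(k,F)$ lying in the same $H$-coset for some $t_1,t_2\in\gl(n,F)$ (clause (b)), or $|N|=|N'|>k$ with $N=N'$ because $G_i=\{1\}$ for $i>k$ (clause (c)). The rank bound $|N|,|N'|\le k$ in Theorem~\ref{th:1hmat} excludes the last alternative from the non-equality branch, leaving precisely clauses (b) and (c) of the corollary. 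The $\Hg$-condition in clause~(b) is automatic: the existence of $t_1,t_2\in\gl(n,F)$ with $t_1Nt_2,t_1N't_2\in\gl(k,F)$ forces $N,N'$ to share image and kernel (those of $t_1^{-1}E_kt_2^{-1}$).

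For case (2), $L'=\lambda L$ with $\lambda\in F^*$, so the first bullet applies: the parameters for $\theta_2$ are $\mu=0$, $\bar G_0=\{0\}$, $G_n=\cdots=G_{k+1}=\{1\}$, and $G_k=\cdots=G_1=H=\langle\lambda\rangle$. Then Theorem~\ref{th:congrFnmat} gives $N\,\theta_2\,N'$ iff $N=N'=0$ (clause (b) of the theorem, since $\mu=0$), or $|N|=|N'|=i$ with $N'=\lambda' N$ for some $\lambda'\in G_i$, according to the value of $i$. Restricting to $|N|,|N'|\le k$ via Theorem~\ref{th:1hmat} and absorbing the rank-zero case into the general relation $N'=\lambda'N$ (by taking $\lambda'=1$ when $N=N'=0$, noting that $|N'|=|N|$ whenever $\lambda'\in F^*$), one recovers exactly clause~(b) of the corollary in case~(2).

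I do not expect any serious obstacle; the argument is essentially bookkeeping. The only mildly delicate verification is that the normal subgroup $H\unlhd\gl(k,F)$ defined in the statement of case~(1) is indeed the same $\bar G_k$ produced by Theorem~\ref{th:congrFnmat} applied to the principal congruence $\theta_2$; this is immediate from the second bullet point, which tells us that $\bar G_k$ is the normal subgroup of $\gl(k,F)$ corresponding to the congruence generated by $(s_1Ls_2,s_1L's_2)$, i.e.\ the normal subgroup generated by $(s_1Ls_2)^{-1}(s_1L's_2)$. One should also briefly note that this subgroup is well defined independent of the choice of $s_1,s_2\in\gl(n,F)$ carrying $L,L'$ into $\gl(k,F)$, which follows by the same conjugation argument used earlier in Section~\ref{s:principalF} to show well-definedness of the subgroups $H$ and $\hat H$ associated with $\Hg$-related pairs.
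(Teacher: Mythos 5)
Your proposal is correct and follows exactly the route the paper intends: the corollary is obtained by substituting into Theorem \ref{th:1hmat} the explicit description of the principal congruence $\theta_2$ on $F_n$ given by Theorem \ref{th:congrFnmat} and the bullet-point list following Lemma \ref{lm:subspace}. The paper gives no written proof beyond indicating this substitution, and your bookkeeping (including the observations that the $\Hg$-condition in clause (b) is automatic and that $|N'|=|N|$ whenever $N'=\lambda'N$ with $\lambda'\in F^*$) fills it in accurately.
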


We remark that there is an obvious dual version of  Theorem \ref{th:1hmat} obtained by switching the roles of the coordinates.  

 \begin{thm}\label{lm:equalF2}
Let $\theta$ be a principal congruence on $F_m\times F_n$ generated by $((K,L),(K',L'))$, where $K'=\lambda K$ for $\lambda\in F^*$,  and $L \Hg L'$, such that 
$L'$ is not a scalar multiple of $L$.  
Let $|K|=i$, $|L|=j$, $\hat H$ the the normal subgroup of  $F^* \times \gl(j,F)$ associated with the pair $((K,L),(K',L'))$, and $G$ the subgroup of $F^*$ generated by $\lambda$.
Then, for $(M,N),(M',N') \in F_m \times F_n$, $(M,N)\theta(M',N')$ if and only if  one of the following holds:
\begin{enumerate}
  \item $(M,N)= (M',N')$;
  \item $|M|\le i$, $M' \Hg  M$, $|N|=j$, $N' \Hg N$ and the normal subgroup $\hat H'$ of  $F^* \times \gl(j,F)$ associated with the pair $((M,N),(M',N'))$ is contained in $\hat H$;
  \item $|M|\le i$, $M'=\lambda' M$ for some $\lambda' \in G$, $|N|, |N'|<j$.
\end{enumerate}
\end{thm}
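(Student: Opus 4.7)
Let $\th'$ denote the relation on $F_m \times F_n$ defined by the three conditions of the statement. The plan is to establish $\th = \th'$. The generating pair $((K,L),(K',L'))$ lies in $\th'$ via case~(2) with $\hat H' = \hat H$, so for the inclusion $\th \subseteq \th'$ it is enough to verify routinely that $\th'$ is a congruence on $F_m \times F_n$, using the closure properties of $G$, of $\hat H$, and the fact that $\hat H'$ composes suitably with left- and right-multiplication.

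For the forward inclusion $\th' \subseteq \th$, case~(1) is immediate, and case~(2) is exactly Lemma~\ref{lm:eHmat}: the hypothesis that $\hat H'$ be defined forces $M' = \lambda' M$, and the lemma then produces $(M,N)\,\th\,(M',N')$. The main work is case~(3). As a preliminary step, multiplying the generator on the right by $(E_m, 0)$ yields $((K,0),(\lambda K, 0)) \in \th$, and pre- and post-multiplication in the first coordinate then gives $((M, 0),(\lambda' M, 0)) \in \th$ for all $|M| \le i$ and every $\lambda' \in G$.

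The crucial ingredient is to show $\th_{I_{j-1}} \subseteq \th_K$, where $\th_{I_{j-1}}$ is the Rees congruence on $F_n$ that collapses all matrices of rank $< j$ into a single class. For this I would introduce the auxiliary relation
\[
\th_K^G := \{(A,B) \in F_n \times F_n : (K, A) \,\th\, (\mu K, B) \text{ for some } \mu \in G\}
\]
and check directly that it is a congruence on $F_n$ (reflexivity, symmetry, transitivity, and compatibility with multiplication all follow from the group structure of $G$ inside $F^*$). Since $(L, L') \in \th_K^G$ via $\mu = \lambda$, the principal congruence $\bar\th$ on $F_n$ generated by $(L, L')$ is contained in $\th_K^G$. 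By Theorem~\ref{th:congrFnmat}, because $L \Hg L'$ and $L'$ is not a scalar multiple of $L$, $\bar\th$ has parameter $\mu = j$, hence $\th_{I_{j-1}} \subseteq \bar\th \subseteq \th_K^G$. Thus for every $N$ with $|N| < j$ there exists $\mu \in G$ with $(K, N) \,\th\, (\mu K, 0)$; combining with $(\mu K, 0) \,\th\, (K, 0)$ from the preliminary step gives $(N, 0) \in \th_K$. By transitivity of $\th_K$ one obtains $\th_{I_{j-1}} \subseteq \th_K$, and Lemma~\ref{lem:singleFmat} extends this to $\th_{I_{j-1}} \subseteq \th_M$ for all $|M| \le i$. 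Case~(3) is then the chain
\[
(M, N) \,\th\, (M, 0) \,\th\, (\lambda' M, 0) \,\th\, (\lambda' M, N').
\]

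The main conceptual hurdle is constructing the auxiliary congruence $\th_K^G$: it is the device that absorbs the scalar ambiguity $\mu \in G$ arising in the first coordinate of $\th$ and allows the rich structure of $\bar\th$, in particular the Rees congruence $\th_{I_{j-1}}$, to be transferred into $\th_K$. Once this bridge is in place, case~(3) follows by mechanical applications of transitivity and the preliminary step, and the verifications needed for the reverse inclusion $\th \subseteq \th'$ are lengthy but routine.
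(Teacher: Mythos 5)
Your proof is correct, and it agrees with the paper's on everything except the key step: case (1), the treatment of case (2) via Lemma \ref{lm:eHmat} (including the observation that the existence of $\hat H'$ forces $M'=\lambda'M$), and the deferral of the reverse inclusion to a routine check all match the paper exactly. For case (3), however, you take a genuinely different route. The paper normalises $L=E_j$, invokes Lemma \ref{lm:subspace} to find a hyperplane $V$ of $F^j$ not preserved by $L'$, multiplies on the right by a rank-$(j-1)$ matrix acting as the identity on $V$ so as to destroy the $\Hg$-relation in the second coordinate, and then quotes the (dual of) Theorem \ref{th:1hmat} / Corollary \ref{c:1hmat} for the resulting pair. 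You instead absorb the scalar twist into the auxiliary congruence $\th_K^G$ on $F_n$, note that it contains the principal congruence of $F_n$ generated by $(L,L')$, read off from Theorem \ref{th:congrFnmat} (and the remarks following it) that this congruence contains $\th_{I_{j-1}}$ precisely because $L'$ is not a scalar multiple of $L$, and then untwist using $(M,0)\,\th\,(\lambda'M,0)$. Both arguments ultimately rest on the same fact --- that such a pair $(L,L')$ collapses everything of rank below $j$ --- but you extract it directly from the classification on the factor $F_n$, whereas the paper reaches it through an explicit matrix construction feeding into its own two-coordinate principal-congruence theorems. Your $\th_K^G$ device is a clean, reusable way to handle the scalar ambiguity and avoids Lemma \ref{lm:subspace} entirely; the paper's construction has the advantage of staying within the chain of results already established for $F_m\times F_n$. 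I verified the details: $\th_K^G$ is indeed a congruence, the preliminary step is valid, and the concluding chain $(M,N)\,\th\,(M,0)\,\th\,(\lambda'M,0)\,\th\,(\lambda'M,N')$ is sound.
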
  
\begin{proof} Let $\th'$ be the relation defined by the statement of the theorem. It is straightforward to check that $\th'$ is a congruence.

Conversely, $\th$ contains the pairs from (1) trivially,   and the ones from (2) by Lemma \ref{lm:eHmat}.

It remains to show that $\th$ contains the pairs from (3). By multiplying with suitable $(E_n,s_1), (E_n,s_2) \in \gl(m,F)\times \gl(n,F)$ on the left and right, we may assume that $L=E_j$, $L' \in \gl(j,F)$, and that $ L'$ is not a scalar multiple of $E_j$. For the following considerations, we will identify  the ideal $F_n E_j F_n$ with $F_j$.

Now, as $L'$ is not a scalar multiple of $E_j$, by Lemma \ref{lm:subspace}, there exists a subspace $V$ of $F^j$ with dimension $j-1$ that is not preserved by $L'$. Let $A \in F_j$ be such that $A$ has rank $j-1$ and is the identity on $V$. Then $(K',L'A)=(K', L')(E_m, A) \th (K,L)(E_m,A)=(K,E_j A)$.

Considering those elements in $F_m \times F_n$ again, we see that $V= \im (E_j A) \ne \im (L'A)$, and so $(E_jA, L'A) \notin \Hg$. By applying Theorem \ref{th:1hmat} to
the pair $((K, E_jA),(K',L'A))$, we see that the pairs in (3) belong to $\th$.
\end{proof}

In view of  Theorem  \ref{lm:equalF1}, Theorem \ref{th:0hmat}, Theorem \ref{th:1hmat},
Theorem \ref{lm:equalF2}, and, where applicable, their dual versions, it remains to determine the principal congruence $\th$ when 
$K \mathcal{H} K$, $L \mathcal{H}  L'$, $K' \ne \lambda K$ for all $\lambda \in F^*$ and $L' \ne \lambda L$ for all $\lambda \in F^*$.

 \begin{thm}\label{lm:HHF}
Let $\theta$ be a principal congruence on $F_m\times F_n$ generated by $((K,L),(K',L'))$, where $K \Hg K'$,  and $L \Hg L'$, such that $K'$ is not a scalar multiple of $K$ and
$L'$ is not a scalar multiple of $L$.  
Let $|K|=i$, $|L|=j$, $H$ the the normal subgroup of  $\gl(i, F) \times \gl(j,F)$ associated with the pair $((K,L),(K',L'))$.
Then for $(M,N),(M',N') \in F_m \times F_n$, $(M,N)\theta(M',N')$ if and only if  one of the following holds:
\begin{enumerate}
  \item $(M,N)= (M',N')$;
  \item $|M|=i$, $M' \Hg  M$, $|N|=j$, $N' \Hg N$ and the normal subgroup $H'$ of  $\gl(i, F) \times \gl(j,F)$ associated with the pair $((M,N),(M',N'))$ is contained in $H$;
  \item $|M|=i$, $M' \Hg M$, $|N|,|N'| < j$, and there exists $s_1,s_2 \in \gl(m,F)$ such that $s_1Ms_2$ and $s_2M's_2$ are both in $\gl(i,F)$ and belong to the same coset of 
  $\gl(i,F)$
  modulo $\pi_1(H)$;
  \item $|N|=j, N' \Hg N$, $|M|,|M'| <i$, and there exists $s_1,s_2 \in \gl(n,F)$ such that $s_1Ns_2$ and $s_2N's_2$ are both in $\gl(j,F)$ and belong to the same coset of 
  $\gl(j,F)$
  modulo $\pi_2(H)$;
  \item $|M|, |M'|<i$, $|N|,|N'|< j$.
\end{enumerate}
\end{thm}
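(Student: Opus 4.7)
The proof closely mirrors that of Theorem \ref{th:2h}, using the matrix analogues of the principal congruence descriptions established earlier in this section. The plan is to first show that every pair satisfying one of conditions (1)--(5) lies in $\th$, and then to verify that the relation $\rho$ defined by the disjunction of these five conditions is a congruence; the minimality of $\th$ then forces equality.

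For the forward direction, condition (1) is immediate and condition (2) is the content of Lemma \ref{lm:HHmat} applied to $\th$. For conditions (3)--(5), I would use a rank-reduction argument analogous to that in the proof of Theorem \ref{lm:equalF2}. After multiplying the generating pair by suitable invertible pairs, we may assume $(K,L)=(E_i,E_j)$, so that $(K',L')\in \gl(i,F)\times \gl(j,F)$ with neither coordinate a scalar multiple of the corresponding identity. Applying Lemma \ref{lm:subspace} to $K'$ I would find a subspace $V\subseteq F^i$ of dimension $i-1$ not preserved by $K'$, and pick $A_1\in F_i\hookrightarrow F_m$ of rank $i-1$ which is the identity on $V$. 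Right-multiplying the generating pair by $(A_1,E_n)$ yields
\[
((E_iA_1,L),(K'A_1,L'))\in \th,
\]
and $\im(E_iA_1)=V\ne K'(V)=\im(K'A_1)$ forces $(E_iA_1,K'A_1)\notin \Hg$. Since $L\Hg L'$ and $L'\ne \lambda L$ for all $\lambda\in F^*$, Theorem \ref{th:1hmat} together with Corollary \ref{c:1hmat} applies to this new generating pair and produces all pairs described in conditions (4) and (5), once one observes that the normal subgroup of $\gl(j,F)$ parametrising the congruence $\th_2$ on $F_n$ generated by $(L,L')$ agrees with $\pi_2(H)$ (both are the normal closure of $L'$ in $\gl(j,F)$ under our reduction to $(K,L)=(E_i,E_j)$). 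A symmetric argument, using a rank-$(j-1)$ matrix $A_2\in F_j\hookrightarrow F_n$ in the second coordinate, gives conditions (3) and (5).

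For the converse, let $\rho$ be the relation on $F_m\times F_n$ given by the disjunction of conditions (1)--(5). Since the generating pair $((K,L),(K',L'))$ satisfies condition (2), it suffices to verify that $\rho$ is a congruence, which would then force $\th\subseteq \rho$. Reflexivity and symmetry are immediate, using that $H$ and its projections are closed under inversion, together with the independence of $H$ from the choice of reducing units, as established in the preliminary discussion of this section. Transitivity and two-sided compatibility with multiplication are then verified case by case, exploiting the normality of $H$, $\pi_1(H)$, and $\pi_2(H)$, together with the fact that multiplication can only lower ranks.

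The main obstacle is twofold. First, the identification of the normal subgroup of $\th_2$ with $\pi_2(H)$ (and dually of the normal subgroup of $\th_1$ with $\pi_1(H)$) requires tracking how the reducing units appearing in the definition of $H$ match up with the parametrisation of principal congruences supplied by Theorem \ref{th:congrFnmat}; once this is in hand, conditions (3) and (4) take precisely their stated form. Second, the compatibility verification for $\rho$, although routine, is lengthy; the most delicate step is checking for condition (3) that if $s_1Ms_2$ and $s_1M's_2$ differ by an element of $\pi_1(H)$, then after left multiplication by $(P,Q)$ one may find new units bringing $PM$ into $\gl(i,F)$ so that the corresponding coset relation modulo $\pi_1(H)$ is still satisfied; this ultimately reduces to the conjugation-invariance of the normal subgroup $\pi_1(H)$.
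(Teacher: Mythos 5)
Your proposal is correct and follows essentially the same route as the paper: condition (2) via Lemma \ref{lm:HHmat}, then a rank-reduction in one coordinate using Lemma \ref{lm:subspace} (exactly as in the proof of Theorem \ref{lm:equalF2}) to create a non-$\Hg$-related pair so that Theorem \ref{th:1hmat} and Corollary \ref{c:1hmat} yield conditions (3)--(5), and finally the verification that the relation defined by (1)--(5) is a congruence. The only cosmetic difference is that you reduce the first coordinate where the paper reduces the second, and you spell out the identification of the $\th_2$-parameter with $\pi_2(H)$, which the paper leaves implicit.
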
  
%The proof os this lemma is essentially as for Lemma \ref{lm:equalF2}, except that we use Lemma \ref{lm:HHmat} in place of Lemma \ref{lm:eHmat}, and that we need to apply 
%the part (3) argument of the proof of Lemma \ref{lm:equalF2} twice, once in each coorrdinate. We leave the technical adjustments to the reader. 
\begin{proof}
Let $\th'$ be the relation defined by the statement of the theorem. It is straightforward to check that $\th'$ is a congruence.

Conversely, $\th$ contains the pairs from (1) trivially,   and the ones from (2) by Lemma \ref{lm:HHmat}.

It remains to show that $\th$ contains the pairs from (3), (4), and (5). Using an analogous argument to the last part of the proof of Theorem \ref{lm:equalF2}, we can find an 
$A \in F_n$ such that
$(K',L'A)\th(K,E_j A)$, where $L'A$ and $E_jA$ have rank $j-1$ but have different images and hence are not $\Hg$-related. 
Now the congruence $\beta$ generated by $((K',L'A),(K,E_jA))$ is contained in $\th$ and an application of Corollary \ref{c:1hmat}
to $\beta$ shows that $\beta$ contains all pairs from (3) and (5), and then so does $\th$.

Finally, $\th$ contains the pairs in (4) by a symmetric argument.

\end{proof}

\section*{Appendix}

This appendix contains the proof of the following purely group theoretic result.
\begin{thm}
The normal subgroups  $N$ of $ S_i \times S_k$ are exactly the ones of the following form:
\begin{enumerate}
  \item $N=N_1 \times N_2$, where $N_1 \unlhd S_i, N_2 \unlhd S_k$.
  \item $N=\{(a,b) \in S_i \times S_j\,|\, \mbox{$a$ and $b$ have the same parity}\}$.
\end{enumerate}

\end{thm}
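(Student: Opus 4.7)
The plan is as follows. Let $N$ be a normal subgroup of $S_i \times S_k$. I would start by introducing the four associated subgroups
$$B_1 = \{a \in S_i : (a,e) \in N\}, \quad A_1 = \pi_1(N), \quad B_2 = \{b \in S_k : (e,b) \in N\}, \quad A_2 = \pi_2(N),$$
where $\pi_1, \pi_2$ are the coordinate projections. The normality of $N$ in $S_i \times S_k$ immediately gives $B_1, A_1 \unlhd S_i$ and $B_2, A_2 \unlhd S_k$. Moreover, the rule $\phi(aB_1) = bB_2$, for any $b$ with $(a,b) \in N$, defines an isomorphism $\phi : A_1/B_1 \to A_2/B_2$; well-definedness follows because $(a,b), (a,b') \in N$ imply $(e, b^{-1}b') \in N$, so $b^{-1}b' \in B_2$.

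The decisive observation is the inclusion $[S_i, A_1] \subseteq B_1$ (together with its dual $[S_k, A_2] \subseteq B_2$). Indeed, if $(a,b) \in N$ and $g \in S_i$, then by normality $(gag^{-1}, b) \in N$, and multiplying by $(a^{-1}, b^{-1}) \in N$ yields $([g,a], e) \in N$, so $[g,a] \in B_1$. Consequently $A_1/B_1$ lies in the centre of $S_i/B_1$, and symmetrically for $A_2/B_2$ in $S_k/B_2$.

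I would then classify the pairs $(A, B)$ with $B \leq A$ both normal in $S_n$ and $[S_n, A] \subseteq B$, by running through the short list of normal subgroups of $S_n$, namely $\{e\}, A_n, S_n$ for $n \neq 4$ and $\{e\}, V_4, A_4, S_4$ for $n = 4$. Direct computation gives $[S_n, A_n] = A_n$---using the simplicity of $A_n$ for $n \geq 5$ together with an explicit check for $n = 3, 4$---as well as $[S_4, V_4] = V_4$, since $S_4/V_4 \cong S_3$ acts faithfully on $V_4$, and $[S_n, S_n] = A_n$. The upshot is that the only admissible pair yielding a non-trivial quotient $A/B$ is $(S_n, A_n)$ with $A/B \cong \mathbb{Z}/2$, realised by the sign homomorphism.

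Combining these ingredients, either $A_1 = B_1$ and $A_2 = B_2$, in which case $N = B_1 \times B_2$ is a product of normal subgroups; or $A_j/B_j \cong \mathbb{Z}/2$ for both $j \in \{1, 2\}$, and $\phi$ is forced to be the unique isomorphism $S_i/A_i \to S_k/A_k$, so that $N$ consists precisely of the pairs $(\sigma_1, \sigma_2)$ having the same signature. The main technical obstacle is the commutator computation for small $n$, especially the Klein four subgroup of $S_4$, which could \emph{a priori} contribute an extra non-trivial central abelian quotient of $S_n$ and thereby allow an additional family of non-product normal subgroups.
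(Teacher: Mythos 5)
Your proof is correct, and it takes a genuinely different (and somewhat cleaner) route than the paper's. The paper works directly at the element level: it fixes $(x,y)\in N$ outside the product of the two ``slice'' subgroups, observes that $(\bar x,y)\in N$ for every conjugate $\bar x$ of $x$, and then disposes of the possible slices one at a time --- when the slice is trivial it deduces that $x$ must be central in $S_i$ (impossible for $i\ge 3$), and when the slice is $V_4\le S_4$ it compares the sizes of the relevant $S_4$-conjugacy classes ($6$, $8$, $6$) with the coset size $4$ to get a contradiction; what survives is the pair $(S_n,A_n)$, giving the same-parity subgroup. You instead invoke Goursat's lemma and package the same conjugation observation once and for all as the inclusion $[S_i,\pi_1(N)]\subseteq B_1$, reducing everything to the uniform classification of pairs $B\le A$ of normal subgroups of $S_n$ with $[S_n,A]\subseteq B$; the computations $[S_n,A_n]=A_n$, $[S_4,V_4]=V_4$ and $[S_n,S_n]=A_n$ then show that $S_n/A_n\cong \mathbb{Z}/2$ is the only admissible nontrivial central quotient, and uniqueness of the isomorphism between two groups of order two forces the same-parity subgroup. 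Your treatment of the $V_4$ case via $[S_4,V_4]=V_4$ (faithful action of $S_4/V_4\cong S_3$ on the three involutions) replaces the paper's counting argument and handles all exceptional cases by one criterion; the paper's argument, on the other hand, avoids any appeal to Goursat and is entirely self-contained at the level of elements. Both rest on knowing the normal subgroup lattice of $S_n$, including the exceptional $V_4\unlhd S_4$, which you correctly identify as the only real obstacle.
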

\begin{proof}
It is clear that the listed items are normal subgroups of $S_i \times S_k$. Conversely, let $N \unlhd S_i \times S_k$. Note that if $(x,y) \in N$ then by normality we get that
$(gxg^{-1}, y), (x, hyh^{-1}) \in N$ for all $g \in S_i, h \in S_k$.

Let $N_1= \{x \in S_i \,|\, (x,\id_{S_k}) \in N\}, N_2=\{y \in S_k \,|\, (\id_{S_i},y) \in N\}$. Clearly, $N_1, N_2$ are normal subgroups of $S_i, S_k$, respectively, hence they
are either the trivial, alternating, or symmetric groups of the corresponding degree, or the normal subgroup
$V_4$ of $S_4$. It is also clear that $N_1 \times N_2 \subseteq N$. If $N= N_1 \times N_2$ then $N$ is of the form (1).

Assume that $N$ is not of the form $N_1 \times N_2$. Let
$(x,y) \in N \setminus N_1 \times N_2$, say $x \notin N_1$. Then
$y \notin N_2$ either, for otherwise $(x,\id)=(x,y)(\id, y^{-1}) \in N$, contradicting $x \notin N_1$. Similarly $y \notin N_2$ implies $x \notin N_1$. Hence we may conclude in this case that $N_1 \ne S_i$ and $N_2 \ne S_k$.

Let $N_1 \ne A_i$, and note that this implies that $i \ge 3$. Suppose that $N_1= \varepsilon_i$. If $(x',y) \in N$ then $(x'x^{-1}, \id)=(x',y) (x^{-1},y^{-1})\in N$, hence $x'x^{-1} \in N_1=\varepsilon_i$
and therefore $x=x'$. However, if
$(x,y) \in N$, then $(\bar x,y) \in N$ for every conjugate $\bar x$ of $x$. It follows that $x$ is a central element of $S_i$. However $x \notin N_1= \varepsilon_i$. As  the identity is the only central element of $S_i$ for $i \ge 3$, we arrive at a contradiction, and hence $N_1 \ne \varepsilon_i$.

We now must discuss the possibility of having $i=4$ and
 $N_1=V_4$. In this case, $N$ contains exactly $4$ elements of the form $(x',y)$, and as before, for every conjugate $\bar x$ of $x$, $(\bar x, y ) \in N$. Recall that $V_4$ consists of the identity and all products of two disjoint transpositions in $S_4$. As $x \notin N_1=V_4$, then $x$ must be either a transposition, a $3$-cycle or a $4$-cycle. However, the $S_4$-conjugacy classes corresponding to these cycle structures  have sizes $6$, $8$, and $6$, respectively, and so they cannot all be of the form $(x,y)(V_4 \times \varepsilon_k)$. As before, we may conclude that $N_1 \ne V_4$.

Dual results hold for $N_2$, and hence we are left to analyse the case that $N_1=A_i$ and $N_2=A_k$. If $i=1$ or $k=1$, then $A_i=S_1$ or $A_k=S_1$ and these cases have been covered. So assume that $i,k\ge 2$. Then
$x$ and $y$ are odd permutations and $A_i \times A_k \cup \{(x,y)\}\left(A_i \times A_k\right)\subseteq N$. The reverse inclusion holds, as we have already shown that $N$ does not contain elements $(x',y')$ with
$x' \in N_1$ and $y' \notin N_2$, or vice versa. Therefore $N=A_i \times A_k \cup \{(x,y)\}\left(A_i \times A_k\right)$ and hence is of the form (2).
\end{proof}

\section{Problems} 

In this final section we are going to propose a number of problems that arise naturally from the results above.

In \cite{ST}, the description of the congruences of $\T_n$ (provided in \cite{Malcev}) is used to describe the endomorphisms of $\T_n$. This suggests the following problem. 

\begin{prob}
Let $Q\in \{\PT,\T,\In\}$. Describe the endomorphisms of the monoid $Q_n\times Q_m$. 
\end{prob}

Regarding the monoid $F_n$, its congruences are known since $1953$ \cite{Malcev2}, but the description of $\End(F_n)$ is still to be done. 

\begin{prob}
Describe the endomorphisms of the monoid $F_n$, the monoid of all $n\times n$ matrices with entries in the field $F$.
\end{prob}

Once the previous  problem is solved, the following is also natural given the results in this paper. 

\begin{prob}  
 Use the description of the congruences on  $S=F_n\times F_m$ to classify the endomorphisms of $S$. 
\end{prob}

Our original goal was the description of the congruences on the direct product of some classic transformation monoids; of course, there are many more such classes whose congruences have also been classified (see for example \cite{derek,fern1,fern2,fern3,fern4,suzana}). Therefore  the following is a natural question. 

\begin{prob}
Let $Q_n$ be any transformation monoid or semigroup whose congruences have been described (in particular, let $Q_n$ be one of the monoids or semigroups considered in the papers \cite{derek,fern1,fern2,fern3,fern4}). Describe the congruences of $Q_n\times Q_m$. 
\end{prob}

Let $G$ be a permutation group contained in the symmetric group $S_n$ and let $t\in \T_n$; let $\langle G,t\rangle$ denote the monoid generated by $G$ and $t$. These monoids (in which the group of units is prescribed) proved to be a source of very exciting new results, especially because usually they lead to considerations involving several different parts of mathematics such as groups, combinatorics, number theory, semigroups, linear algebra, computational algebra, etc.  (For an illustration of what has just been said, see \cite{andre2,ABC,ABCRS,ArBeMiSc,circulant2,ArCa13,ArCa14,ArCa12,acmn,ArCaSt15,ArDoKo,ArMiSc,ben,Ata15,Le85,Le87,Le96,Le99,levi00,lmm,lm,15,mcalister,neu,symo} and the references therein.)
In particular, in \cite{levi00}, the congruences of the monoids $\langle S_n,t\rangle$ are described. In a more general context one may ask the following. 

\begin{prob}\label{p7.5}
Let $G\le S_n$ be a permutation group and let $t\in \T_n\setminus S_n$ be a transformation. Describe the congruences and the endomorphisms of the monoid $\langle G,t\rangle$.  
\end{prob}

The next problem connects  the Problem \ref{p7.5} with  the main results of this paper. 
\begin{prob}\label{p7.6}
Let $G\le S_n$ be a permutation group and let $t,q\in \T_n\setminus S_n$ be two transformations. Describe the congruences and the endomorphisms of the monoid $\langle G,t\rangle\times \langle G,q\rangle$.  
\end{prob}

In order to solve the previous problems, it seems sensible to start by solving them for especially relevant particular classes of groups. 

\begin{prob}
Let $G\le S_n$ be a transitive [respectively, imprimitive, $3$-transitive, $2$-transitive, $3$-homogeneous, $2$-homogeneous, primitive] group. Solve Problem \ref{p7.5} and Problem \ref{p7.6} assuming that $G$ belongs to one of these classes.  
\end{prob}

Next, notice that the previous problems admit linear analogous. 

\begin{prob}
Let $V$ be a finite dimension vector space and let $G\le \Aut(V)$; let $t\in \End(V)\setminus \Aut(V)$. Describe the congruences and the endomorphisms of the monoid $\langle G,t\rangle$. 
\end{prob}

\begin{prob}
Let $V$ be a finite dimension vector space and let $G\le \Aut(V)$; let $t,p\in \End(V)\setminus \Aut(V)$. Describe the congruences and the endomorphisms on $\langle G,t\rangle\times \langle G,p\rangle$. 
\end{prob}

For some results on linear monoids of the form $\langle G,t\rangle$ we refer the reader to \cite{fcsilva}.

Once a result is proved for the endomorphism monoid of a set with $n$ elements, that is, for $\T_n$, and it is also proved for the endomorphism monoid of a finite dimension vector space, the next natural step is to ask for an analogous theorem for the endomorphism monoid of an independence algebra. (Recall that sets and vector spaces are examples of independence algebras; for definitions and results see \cite{araujo,aeg,arfo,cameron,fou1,fou2,gould,gh,mar1,nark,nar3,z88b,z89}.)		
 \begin{prob}\label{p7.10}
 Let $A$ be a finite dimension independence algebra. Describe the congruences on $\End(A)$. 
 \end{prob}
 
 Possibly the best approach to attack Problem \ref{p7.10} is to rely on the classification theorem of these algebras (\cite{cameron,ur,urbanik}), as  in \cite{abk}.

 Once the previous question has been solved, the next task should be to tackle direct products. 
 
 \begin{prob}
 Let $A$ and $B$ be finite dimension independence algebras. Describe the congruences on $\End(A)\times End(B)$. 
 \end{prob}
 
 Fountain and Gould defined the class of \emph{weak exchange algebras}, which contains independence algebras,  among others \cite{FoGo03,FoGo04}.

\begin{prob}
Solve problems, analogous to the two previous, for weak exchange algebras, or some of its subclasses (such as basis algebras, weak independence algebras, etc.; for definitions see \cite{FoGo03,FoGo04}).
\end{prob}

In a different direction, in \cite{ArWe} a large number of other classes generalizing 
independence algebras were introduced, and again similar questions can be posed.

\begin{prob}
Solve problems analogous to the ones above 
for $MC$-algebras, $\mathit{MS}$-algebras, $\mathit{SC}$-algebras, and $\mathit{SC}$-ranked algebras.%  \cite[Chapter 8]{ArWe}.
\end{prob}

A first step towards the solution of the previous problem would be to solve them for an
$\mathit{SC}$-ranked free $M$-act \cite[Chapter 9]{ArWe},
and for an $\mathit{SC}$-ranked free module over an $\aleph_1$-Noetherian ring \cite[Chapter 10]{ArWe}.

In the introduction we mentioned  that we were driven to the results in this paper by some considerations on centralizers of idempotents, that we now introduce.  

Let $e^2=e\in \T_n$, and denote by $C_{\T_n}(e)$ the centralizer of the idempotent $e$ in $\T_n$, that is, 
\[
C_{\T_n}(e)=\{f\in \T_n\mid fe=ef\}.
\]
The monoid $C_{\T_n}(e)$ has many very interesting features, starting with the fact that it is a generalization of both $\T_n$ and $\PT_n$ (see \cite{andre,arko2,arko1,ak}). 

\begin{prob}
Describe the congruences on $C_{\T_n}(e)$. 
\end{prob}

This is certainly a very difficult problem. One possibly more feasible challenge is the following. 

\begin{prob}
Describe the congruences on the monoids  $C_{\T_n}(e)$ that are regular. 
\end{prob}

The solution of  this problem requires a complete description of the congruences of $\PT_n\times \PT_m$, and that was what prompted us to write this paper. This description is necessary but it is not sufficient. In fact it is also necessary to solve the following. 

\begin{prob}
Let $\Gamma$ be a finite set of finite  chains and let $S$ be the direct product of the chains in $\Gamma$. This direct product can be seen as a commutative semigroup of idempotents. Describe the congruences on this semilattice.   
\end{prob}

In this paper we consider monoids of the form $Q_n\times Q_m$, where $Q\in \{\PT,\T,\In\}$. However, as pointed out in Observation \ref{obs}, it is not necessary  to restrict the direct product to semigroups of the same \emph{type}. 

\begin{prob}
Let $S$ and $T$ be any two transformation semigroups belonging to classes of semigroups whose congruences have already been described. Find the endomorphisms and congruences on $S\times T$.  
\end{prob}

%The semidirect construction is almost as ubiquitous as the direct product and hence we propose the following. 
%
%\begin{prob}
%Let $S$ and $T$ be two semigroups whose congruences and endomorphisms have been classified. Describe the congruences of  a semidirect product of $S$ and $T$. 
%\end{prob}
%For results on semidirect products of transformation semigroups see \cite{Jesus,meldrum}. 

Finally, the partition monoid \cite{chwas,east1,east3,east4,east4b),east5,east5b),east2,fitzLau} has a very rich structure and hence has been attracting increasing attention. The problems mentioned before when considered in the context of partition monoids
are certainly great challenges, but given the growing importance of these monoids they are certainly natural.

\begin{prob}
Describe the congruences and endomorphisms of the partition monoid. Solve similar problems for the direct product of two partition monoids. 
\end{prob}

\section*{Acknowledgements}
This work was developed within FCT projects CAUL (PEst-OE/ MAT/UI0143/2014)  and CEMAT-CI\^{E}NCIAS (UID/Multi/04621/2013).

%The first author acknowledges support from

The second author acknowledges support from the
European Union Seventh Framework Programme (FP7/2007-2013) under
grant agreement no.\ PCOFUND-GA-2009-246542 and from the Foundation for
Science and Technology of Portugal under SFRH/BCC/52684/2014.

%The third author acknowledges support from

\end{document}